\documentclass[a4paper,12pt]{amsart}
\usepackage[left=3.4 cm,top=2.4cm,bottom=3.4cm,right=3.4cm]{geometry}
\usepackage{amsfonts,amssymb,amsthm,amsmath}
\usepackage{bbm}

\usepackage{xcolor}

\definecolor{green(munsell)}{rgb}{0.0, 0.66, 0.47}
\definecolor{BlueGreenn}{rgb}{0.3,0.5,0.8}
\definecolor{DB}{rgb}{0.3,0.3,0.3}
\definecolor{DOr}{rgb}{0.7,0.3,0.3}
\definecolor{DGr}{rgb}{0.3,0.7,0.3}
\definecolor{DBl}{rgb}{0.1,0.3,0.5}
\definecolor{arylideyellow}{rgb}{0.91, 0.84, 0.42}
\definecolor{burntorange}{rgb}{0.8, 0.33, 0.0}
\definecolor{chromeyellow}{rgb}{1.0, 0.65, 0.0}
\usepackage[hypertexnames=true, citecolor = burntorange, colorlinks = true, linkcolor = BlueGreenn, pdffitwindow = false, urlbordercolor = white,pdfborder={0 0 0}]{hyperref}%

\usepackage[T1]{fontenc}
\usepackage{microtype}

\usepackage{color}
\usepackage[utf8]{inputenc}

\usepackage{framed}
\usepackage{centernot}

\usepackage{tikz-cd} 
\usepackage{tkz-euclide}

\numberwithin{equation}{section}

\newtheorem{theorem}{Theorem}[section]
\newtheorem{proposition}[theorem]{Proposition}
\newtheorem{lemma}[theorem]{Lemma}

\newtheorem{corollary}[theorem]{Corollary}
\theoremstyle{definition}
\newtheorem{definition}[theorem]{Definition}

\newtheorem{example}[theorem]{Example}

\theoremstyle{remark}
\newtheorem{remark}[theorem]{Remark}


\newcommand{\ot}{\otimes}
\newcommand{\tp}[1]{^{\otimes #1}}    


\DeclareMathOperator{\End}{End}

\DeclareMathOperator{\id}{id}
\DeclareMathOperator{\supp}{supp}
\DeclareMathOperator{\Tr}{Tr}

\DeclareMathOperator{\dom}{dom}

\newcommand{\Cl}{\mathbb{C}}
\newcommand{\Rl}{\mathbb{R}}
\newcommand{\Nl}{\mathbb{N}}


\newcommand{\B}{\mathcal{B}}

\newcommand{\M}{\mathcal{M}}

\newcommand{\Hil}{\mathcal{H}}

\newcommand{\Om}{\Omega}
\newcommand{\om}{\omega}
\newcommand{\la}{\lambda}
\newcommand{\eps}{\varepsilon}
\newcommand{\te}{\theta}


\usepackage[shortlabels]{enumitem}

\usepackage{mathtools}
\mathtoolsset{showonlyrefs=true} 

\usepackage{ytableau}
\ytableausetup{boxsize=0.5em,centertableaux}


\makeatletter

\makeatother

\usepackage{comment} 

\newif\ifshow 
\showfalse 

\ifshow
  \includecomment{wrap}
\else
  \excludecomment{wrap} 
\fi


\newcommand{\bR}{{\mathbb R}}
\newcommand{\bC}{{\mathbb C}}
\newcommand{\Hardy}{{\mathbb H}}


\def\End{\operatorname{End}}
\def\Hom{\operatorname{Hom}}
\def\Rep{\operatorname{Rep}}
\def\Hilbf{\operatorname{Hilb_f}}

\def\ran{\operatorname{ran}}
\def\supp{\operatorname{supp}}
\def\ker{\operatorname{ker}}

\def\dim{\operatorname{dim}}

\def\id{\operatorname{id}}

\def\Re{\operatorname{Re}}
\def\Im{\operatorname{Im}}

\def\id{\operatorname{id}}
\def\supp{\operatorname{supp}}
\def\ev{\operatorname{ev}}
\def\coev{\operatorname{coev}}




\newcommand{\Strip}{{\mathbb S}}


\newcommand{\La}{\Lambda}

\newcommand{\CA}[0]{\mathcal{A}} \newcommand{\CB}[0]{\mathcal{B}}
\newcommand{\CD}[0]{\mathcal{D}}
\newcommand{\CE}[0]{\mathcal{E}} \newcommand{\CF}[0]{\mathcal{F}}
 
\newcommand{\CI}[0]{\mathcal{I}} \newcommand{\CJ}[0]{\mathcal{J}}
 
\newcommand{\CM}[0]{\mathcal{M}} \newcommand{\CN}[0]{\mathcal{N}}

\newcommand{\CS}[0]{\mathcal{S}} 
\newcommand{\CU}[0]{\mathcal{U}} 
 \newcommand{\CX}[0]{\mathcal{X}}

\newcommand{\ip}[2]{\left\langle#1,#2\right\rangle}

\usepackage{mathrsfs}

\DeclareMathOperator{\Cross}{Cr}
\DeclareMathOperator{\CatCross}{CatCr}
\DeclareMathOperator{\St}{St}
\newcommand{\Deltah}{\widehat{\Delta}}
\newcommand{\Deltat}{\widetilde{\Delta}}
\newcommand{\Deltatin}{\widetilde{\Delta}^{\rm in}}
\newcommand{\Deltatout}{\widetilde{\Delta}^{\rm out}}\newcommand{\Deltatinout}{\widetilde{\Delta}^{\rm in/out}}
\newcommand{\Deltatoutin}{\widetilde{\Delta}^{\rm out/in}}
\newcommand{\Jh}{\widehat{J}\,}
\newcommand{\JF}{\widetilde{J}_F}
\newcommand{\Sh}{\widehat{S}\,}
\newcommand{\Hh}{\widehat{H}}
\newcommand{\CrossSet}{\mathcal{C}}
\newcommand{\CrossSetb}{\mathcal{C}_{\text{b}}}



\title{Crossing symmetry and the crossing map}
\author{Ricardo Correa da Silva, Luca Giorgetti, Gandalf Lechner}
\address[1]{Department Mathematik, FAU Erlangen-N\"urnberg, Cauerstr. 11, DE-91058 Erlangen, Germany}
\email{ricardo.correa.silva@fau.de}
\email{gandalf.lechner@fau.de}
\address[2]{Dipartimento di Matematica, Universit\`a di Roma Tor Vergata, Via della Ricerca Scientifica 1, I-00133 Roma, Italy}
\email{giorgett@mat.uniroma2.it}

\thanks{
L.G. is partially supported by \lq\lq MIUR Excellence Department Project MatMod@TOV\rq\rq\ awarded to the Department of Mathematics, University of Rome Tor Vergata, CUP E83C23000330006, by the University of Rome Tor Vergata funding OAQM, CUP E83C22001800005, by progetto GNAMPA 2023 \lq\lq Metodi di Algebre di Operatori in Probabilit\`a non Commutativa\rq\rq\ CUP E53C22001930001, and by progetto GNAMPA 2024 \lq\lq Probabilit\`a Quantistica e Applicazioni\rq\rq\ CUP E53C23001670001.
G.L. is supported by the Deutsche Forschungsgemeinschaft DFG through the Heisenberg project ``Quantum Fields and Operator Algebras'' (LE 2222/3-1).}

\date{}

\begin{document}

\begin{abstract}
	We introduce and study the crossing map, a closed linear map acting on operators on the tensor square of a given Hilbert space that is inspired by the crossing property of quantum field theory. This map turns out to be closely connected to Tomita--Takesaki modular theory. In particular, crossing symmetric operators, namely those operators that are mapped to their adjoints by the crossing map, define endomorphisms of standard subspaces. Conversely, such endomorphisms can be integrated to crossing symmetric operators.
	We also investigate the relation between crossing symmetry and natural compatibility conditions with respect to unitary representations of certain symmetry groups, and furthermore introduce a generalized crossing map defined by a real object in an abstract $C^*$-tensor category, not necessarily consisting of Hilbert spaces and linear maps. This latter crossing map turns out to be closely related to the (unshaded, finite-index) subfactor theoretical Fourier transform. Lastly, we provide families of solutions of the crossing symmetry equation, solving in addition the categorical Yang--Baxter equation, associated with an arbitrary Q-system.
\end{abstract}

\maketitle

\section{Introduction}

In quantum field theory, the crossing property or crossing symmetry is a symmetry relating different scattering amplitudes by analytic continuation in the Mandelstam variables of the corresponding process. While crossing symmetry seems to be a fundamental property of QFT \cite{Schroer:2010,Mizera:2021}, it is difficult to derive it from first principles (see, however, \cite{BrosEpsteinGlaser:1965}) and it is therefore often taken as an assumption for concrete models (see, for example, \cite{Martin:1969_2,PolandSimmons-Duffin:2016}).

Crossing symmetry relates the scattering amplitude of a $2\to 2$ process with the amplitude of a process in which two of the particles are exchanged with their antiparticles (TCP conjugates). Diagrammatically, this connects the two diagrams
\begin{figure}[h!]
	\includegraphics[width=50mm]{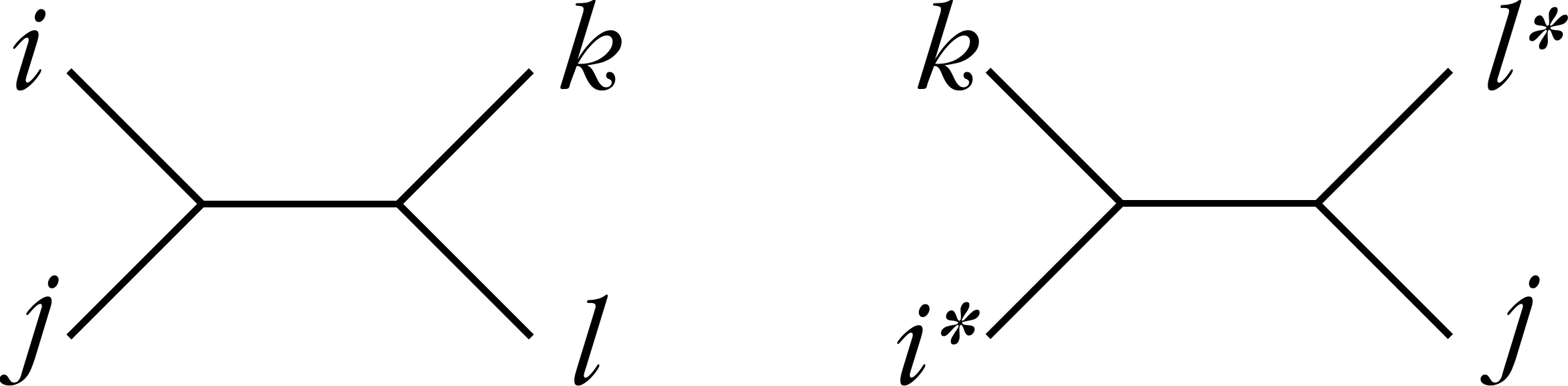}
	\caption{}\label{fig:diagrams}
\end{figure}

\noindent in which asterisks denote antiparticles. On a diagrammatic level, crossing symmetry therefore amounts to an operation involving an object with four indices that are rotated by $90^\circ$ and possibly conjugated. Such operations have also been considered in purely mathematical settings independent of QFT -- see, for example, \cite{Lyubashenko:1987,BozejkoSpeicher:1994,JorgensenSchmittWerner:1995}.

While this diagrammatic point of view might appear to be rather superficial regarding the crossing symmetry from QFT, we will show in this article that it allows for a rich mathematical framework which includes the QFT crossing symmetry as a special case.

To motivate our setting of crossing symmetry, we consider a complex Hilbert space $\Hil$ and postpone a proper discussion of all functional analytic aspects pertaining to domains and boundedness to the body of the paper. Reading the diagrams in Figure~\ref{fig:diagrams} according to the usual graphical notation for tensor products, we aim to associate with linear operators $T:\Hil\ot\Hil\to\Hil\ot\Hil$ their ``crossed versions'' $\widehat{T}:\Hil\ot\Hil\to\Hil\ot\Hil$ which ought to be defined in terms of their matrix elements $\widehat T^{\varphi_1\varphi_2}_{\psi_2\psi_1} := \langle\varphi_1\ot\varphi_2,\widehat{T}(\psi_2\ot\psi_1)\rangle$ by a rotation of the four indices by $90^\circ$, $\widehat{T}^{\varphi_1\varphi_2}_{\psi_2\psi_1} = T^{\varphi_2\psi_1}_{\varphi_1\psi_2}$, in a suitable basis, or,
\begin{align}\label{eq:cross-idea1}
	\langle\varphi_1\ot\varphi_2,\widehat{T}(\psi_2\ot\psi_1)\rangle
	&=
	\langle\varphi_2\ot S^*\psi_1,T(S\varphi_1\ot\psi_2)\rangle, \qquad\varphi_k,\psi_k\in\Hil.
\end{align}
In \eqref{eq:cross-idea1}, we have included an {\em antilinear} map $S$ and its adjoint $S^*$ which is clearly necessary for the equation to make sense. Crossing symmetric operators~$T$ should then be those satisfying $\widehat{T}=T^*$.

To specify the antilinear operator~$S$ defining the map $T\mapsto\widehat T$, we take as a guiding principle that the tensor flip $F:\Hil\ot\Hil\to\Hil\ot\Hil$, $F(\varphi_1\ot\varphi_2)=\varphi_2\ot\varphi_1$, should be crossing symmetric, $\widehat{F}=F^*$. This is on the one hand in line with what is suggested by the diagrams, and on the other hand justified by the distinguished role played by the flip in the quantum field theoretic context, where it identifies the free theory \cite{CorreaDaSilvaLechner:2023}. It then follows from a simple calculation based on \eqref{eq:cross-idea1} that $S$ needs to be an {\em involution}.

We thus see that the idea of crossing symmetry on Hilbert spaces automatically leads us to consider antilinear involutions. As will be explained in the next section, it is well possible and interesting to consider general (not necessarily bounded) densely defined closed antilinear involutions, which immediately indicates that crossing symmetry should be considered in the context of Tomita--Takesaki modular theory. In view of the close connection of some modular operators to TCP symmetry \cite{BisognanoWichmann:1976,Borchers:1992}, and taking into account that the crossing property can sometimes be proven by a process of analytic continuation \cite{BrosEpsteinGlaser:1965} resembling the KMS/modular condition, it is reasonable to expect that also the abstract crossing we study here should be connected to modular theory, as is indeed the case. We refer to \cite{Niedermaier:1998,Lechner:2003,Schroer:2010,BischoffTanimoto:2013,HollandsLechner:2018} for previous connections between crossing symmetry and modular theory.

One of the main motivations for the present article comes from our recent work \cite{CorreaDaSilvaLechner:2023}. In that paper, we construct a so-called twisted Araki-Woods von Neumann algebra, represented on a twisted Fock space, based on two objects: A selfadjoint twist operator $T\in\CB(\Hil\ot\Hil)$ and a standard subspace (cf.\ Remark~\ref{remark:modulartheory}) $H\subset\Hil$. In case these two data are compatible in the sense that the modular unitaries $\Delta_H^{it}\ot\Delta_H^{it}$ commute with $T$, it was shown that the Fock vacuum is cyclic and separating for the twisted algebra if and only if $T$ satisfies a crossing symmetry condition w.r.t.\ the Tomita operator $S_H$ of $H$, and solves the Yang--Baxter equation. As these algebras underlie the construction of many quantum field theoretic models, one is interested in examples of crossing symmetric operators.

The main purpose of this article is a systematic investigation of a crossing map that is well suited for Hilbert space operators and modular theory. This investigation thus connects two areas in which Huzihiro Araki made lasting contributions (as a sample of his work, see \cite{ArakiHaag:1967,Araki:1974}), as appropriate for this memorial volume. Also the main applications that we have in mind, namely a better understanding of the twisted Araki--Woods algebras generalizing the (untwisted) Araki--Woods factors \cite{ArakiWoods:1968} (and its already existing generalizations \cite{Shlyakhtenko:1997,KumarSkalskiWasilewski:2023,RahulKumarWirth:2024}), connect to Araki's work. In this article, we however concentrate on the crossing map as such and postpone applications to future work.

\medskip

In Section~\ref{sec:crossingS}, we give a detailed description of the crossing map that was sketched above. We start from a closed densely defined antilinear involution~$S$ on a Hilbert space~$\Hil$, and define a crossing map $T\mapsto\Cross_S(T)$ for operators $T\in\CB(\Hil\ot\Hil)$ (Def.~\ref{def:Scrossable}). For infinite-dimensional $\Hil$, the map $\Cross_S$ is however only densely defined (not all $T\in\B(\Hil\ot\Hil)$ can be ``crossed'') and some care is needed when studying its properties. In Section~\ref{sec:crossingS}, we therefore analyze closedness properties of $\Cross_S$ (Prop.~\ref{prop:generalcrossing}) and establish the link between crossing symmetry and KMS-type analytic conditions (Prop.~\ref{prop:KMS-likeCrossing}).

In Section~\ref{sec:crossingSbdd}, we consider the special case of a bounded involution $S$, including the subcase of antiunitary involutions. We show that for bounded $S$, the space of all crossing symmetric operators, i.e.\ all $T\in\CB(\Hil\ot\Hil)$ that lie in the domain of $\Cross_S$ and satisfy $\Cross_S(T)=T^*$, closely resembles a (Banach space version of a) standard subspace (Thm.~\ref{thm:Hhat}). When passing to Hilbert--Schmidt operators, it turns out that the crossing symmetric ones indeed form a standard subspace (Prop.~\ref{prop:HS-Crossing}). We thus obtain a map from standard subspaces $H\subset\Hil$ (given by involutions $S$ on $\Hil$, see Remark~\ref{remark:modulartheory} below) to standard subspaces $\Hh$ in the Hilbert space of Hilbert--Schmidt operators on $\Hil\ot\Hil$. As the crossing map is isometric in Hilbert--Schmidt norm, we find several formal analogies to Fourier transforms.

Building on this observation, we study in Section~\ref{sec:endos} a tight correspondence between crossing symmetric operators (w.r.t.\ a general involution $S$) and endomorphisms of the standard subspace corresponding to the chosen $S$. Our initial observation is that contraction of two tensor legs of an operator $T$ that is crossing symmetric w.r.t.\ $S$ yields an endomorphism of the standard subspace $H=\ker(S-1)$. Moreover, $T$ can be recovered from these endomorphisms (Thm.~\ref{thm:CrossingEnd}). This perspective allows for the construction of many crossing symmetric operators by integration of endomorphism-valued maps against spectral measures, generalizing previous work of Tanimoto \cite{Tanimoto:2011-1}. In particular, we here obtain examples of operators that are two-body S-matrices which are crossing symmetric in the QFT sense, thus linking back to the original idea of crossing symmetry (Remark~\ref{remark:ModvsQFT-Crossing}).

In applications, one is often interested in operators satisfying crossing symmetry and further conditions such as group symmetries. In Section~\ref{sec:symmetry}, we consider crossing symmetric operators that are invariant under a diagonal unitary representation of the form $g\mapsto U(g)\ot U(g)$ and study how symmetry of a crossing symmetric operator $T$ is reflected in its standard subspace endomorphisms. We determine the space of all crossing symmetric operators that are invariant under $G$-representations in some natural examples, namely the unitary group, the orthogonal group, and irreducible positive mass representations of the Poincaré group in two dimensions (Thm.~\ref{thm:CrossUPoincare}).

In Section~\ref{sec:catcross}, we study a generalized notion of crossing map and crossing symmetry in the context of abstract $C^*$-tensor categories (not necessarily consisting of Hilbert spaces and linear maps). Given a real self-conjugate object $X$ in such a category $C$, the categorical crossing map (Def.~\ref{def:CatCross}) is defined in terms of the associated solutions $\ev_X \in \Hom_C(X\otimes X, \id)$ and $\coev_X \in \Hom_C(\id, X\otimes X)$ of the conjugate equations, called respectively evaluation and coevaluation maps. Diagrammatically, the categorical crossing map has the form
\begin{figure}[h!]
	\includegraphics[width=40mm]{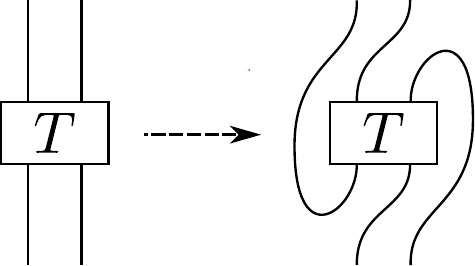}
	\caption{}\label{fig:crossing}
\end{figure}

This picture coincides with the one of Ocneanu's subfactor theoretical/planar algebraic Fourier transform \cite{Ocneanu:1991} (Remark \ref{rmk:CatCrossisF}), see also \cite{Bisch:1997,BischJones:2000,JaffeJiangLiuRenWu:2020} and references therein, and at the same time describes the Hilbert space crossing map studied before \cite[App.~A]{CorreaDaSilvaLechner:2023}.

We show in Theorem~\ref{thm:HilbfCrossing} that this is no coincidence -- For the category $\Hilbf$ of finite-dimensional Hilbert spaces, the categorical crossing map coincides with the Hilbert space crossing map considered in Sections~2--5. This theorem also characterizes standard subspaces of finite-dimensional Hilbert spaces in terms of $\ev_X$ and $\coev_X$.

There are, however, many scenarios that go beyond the Hilbert space setting. In particular, we show that any Q-system in a $C^*$-tensor category $C$ \cite{BischoffKawahigashiLongoRehren:2015} (a special $C^*$-Frobenius algebra object, originally studied in the context of subfactors \cite{Longo:1994}) defines a natural operator $T$ that is crossing symmetric w.r.t.\ the categorical crossing map  (Prop.~\ref{prop:TsolvesCross}) and moreover solves the tensor categorical Yang--Baxter equation (Prop.~\ref{prop:TsolvesYB}). If $C=\Hilbf$, then these $T$ defined by Q-systems provide (Cor.~\ref{cor:twistedAWalgs}) suitable twist operators to feed in the construction of $T$-twisted Araki--Woods algebras of \cite{CorreaDaSilvaLechner:2023}. We conclude the section by giving an explicit description of Q-systems in $\Hilbf$.

\begin{remark}\label{remark:modulartheory}
	As antilinear involutions will be important throughout the paper, we collect here some facts about spatial modular theory that we will use. See \cite{BratteliRobinson:1987,Longo:2008} for more details.
	
	We will consider densely defined closed antilinear involutions
	\begin{align}
		S:\dom(S)\subset\Hil\to\ran(S)\subset\Hil
	\end{align}
	on a complex separable Hilbert space $\Hil$. We denote the set of all such involutions $\CS(\Hil)$. Any $S\in\CS(\Hil)$ admits a polar decomposition $S=J\Delta^{1/2}$ with~$J$ antiunitary, $\Delta$ strictly positive, and the modular relation $J\Delta=\Delta^{-1}J$ holds. We then have $\dom(S)=\ran(S)$ and $S^*=J\Delta^{-1/2}$ is the polar decomposition of $S^*$. In particular, $S$ may be unbounded, bounded, or antiunitary (the latter case is equivalent to $\Delta=1$).
	Any antiunitary involution $J\in\CS(\Hil)$ can be described as complex conjugation in an orthonormal basis $\{e_n\}_{n\in\Nl}$ of $\Hil$, i.e.\ $J$ is given by $Je_n=e_n$ and antilinear extension.
	
	Given $S\in\CS(\Hil)$, the closed real linear space
	\begin{align}
		H_S := \ker(S-1)
	\end{align}
	has the two properties $H\cap iH=\{0\}$ and $\overline{H+iH}=\Hil$; it is called the \emph{standard subspace} associated to $S$. Any standard subspace (a closed real linear subspace $H\subset\Hil$ with the two mentioned properties) defines a unique $S_H\in\CS(\Hil)$, namely
	\begin{align}
		S_H:H+iH\to H+iH,\qquad h_1+ih_2\mapsto h_1-ih_2.
	\end{align}
	Thus, there is a bijection between $\CS(\Hil)$ and the set of standard subspaces of~$\Hil$.
	
	Unless mentioned otherwise, $\Hil$ denotes a complex separable Hilbert space, $S$ an element of $\CS(\Hil)$, and $S=J\Delta^{1/2}$ its polar decomposition.
\end{remark}

\section{The crossing map and crossing symmetry in Hilbert space}\label{sec:crossingS}

We begin by fixing $S\in\CS(\Hil)$ and describe its associated crossing map. For any bounded operator $T\in\B(\Hil\ot\Hil)$ on the tensor square Hilbert space, we define $Q_S(T)$ as the quadratic form with form domain $\dom(S)\odot\dom(S^*)$ (where $\odot$ denotes the algebraic tensor product) by
\begin{align}
	Q_S(T)(\varphi_1\ot\varphi_2,\psi_1\ot\psi_2)
	&:=
	\langle\varphi_2\ot S^*\psi_2,T(S\varphi_1\ot\psi_1)\rangle
\end{align}
and bi-sesquilinear extension.

\begin{definition}\label{def:Scrossable}
	Let $S\in\CS(\Hil)$ be a densely defined closed antilinear involution.
	\begin{enumerate}
		\item An operator $T\in\B(\Hil\ot\Hil)$ is called {\em $S$-crossable} if there exists a bounded operator $\Cross_S(T)\in\B(\Hil\ot\Hil)$ such that
		\begin{align*}
			Q_S(T)(\varphi_1\ot\varphi_2,\psi_1\ot\psi_2)
			&=
			\langle\varphi_1\ot\varphi_2,\Cross_S(T)(\psi_1\ot\psi_2)\rangle
		\end{align*}
		for all $\varphi_1,\psi_1\in \dom(S)$ and for all $\varphi_2,\psi_2\in \dom(S^*)$. This operator is then uniquely determined by $T$ and~$S$, and is called the $S$-{\em crossed version} of $T$.
		
		\item The set of all $S$-crossable operators is denoted $\CrossSet_S(\Hil\ot\Hil)\subset\B(\Hil\ot\Hil)$, or just $\CrossSet_S$ for short, and
		\begin{align}
			\Cross_S : \CrossSet_S(\Hil\ot\Hil) \to \B(\Hil\otimes \Hil)
		\end{align}
		is called the {\em $S$-crossing map}.
		
		\item An operator $T\in\CrossSet_S(\Hil\ot\Hil)$ is called {\em $S$-crossing symmetric} if
		\begin{align}
			\Cross_S(T)=T^*.
		\end{align}
	\end{enumerate}
\end{definition}

The following examples illustrate the concept of $S$-crossable operators.

\begin{example}\label{example:basics}
	\leavevmode
	\begin{enumerate}
		\item For any $S\in\CS(\Hil)$, the tensor flip $F(v\ot w):=w\ot v$ is $S$-crossing symmetric and a fixed point of the crossing map,
		\begin{align}
			\Cross_S(F)=F^*=F.
		\end{align}
		This follows by straightforward computation as indicated in the Introduction.
		
		\item \label{ex:ProductTimesFlip}Slightly more generally than in a), one finds by explicit calculation for any $S\in\CS(\Hil)$ and for any $A,B\in\B(\Hil)$ such that $S^*B^*S^*$ is bounded, we have $(A\ot B)F\in\CrossSet_S$ and
		\begin{align}
			\Cross_S((A\ot B)F)
			=
			F(A\ot S^*B^*S^*)
			=
			(S^*B^*S^* \ot A)F.
		\end{align}
		We therefore see that the crossing map is closely connected to the $S^*${\em -partial transpose} $A\ot B\mapsto A\ot S^*B^*S^*$. Note, however, that $A\ot B$ without the flip factor is in general not crossable if $\dim\Hil=\infty$, see, e.g.\ point d) below.
		
		\item The following example can be seen as a continuous analogue of the previous one. Let $\Hil:=L^2(\Rl)$, $(Sf)(x):=\overline{f(x)}$, and $w\in L^\infty(\Rl^2)$. Then the weighted flip
		\begin{align*}
			(F_w\psi)(x,y) := w(x,y)\psi(y,x)
		\end{align*}
		is $S$-crossable with $\Cross_S(F_w)=F_{\hat w}$, where $\hat w(x,y):=w(y,x)$. In particular, $F_w$ is $S$-crossing symmetric if and only if $w$ is real.
		
		To prove this, let $\varphi_1,\varphi_2,\psi_1,\psi_2\in L^2(\Rl)$. Then
		\begin{align*}
			Q_S(F_w)(\varphi_1\ot\varphi_2,\psi_1\ot\psi_2)
			&=
			\int \overline{\varphi_2(x)}\psi_2(y)w(x,y)\overline{\varphi_1(y)}\psi_1(x)\,dx\,dy
			\\
			&=
			\langle \varphi_1\ot\varphi_2,\,F_{\hat w}(\psi_1\ot\psi_2)\rangle.
		\end{align*}
		Since $F_{\hat w}$ is bounded, this shows $F_w\in\CrossSet_S$, and $\Cross_S(F_w)=F_{\hat w}$, which coincides with ${F_w}^*=F_{\hat{\overline w}}$ if and only if $w$ is real.
		
		\item For $\dim\Hil=\infty$, the identity operator $1\in\B(\Hil\tp{2})$ is {\em not} $S$-crossable for any $S\in\CS(\Hil)$. In fact, thanks to the polar decomposition and spectral theory, for any closed antilinear involution $S$ there exists an infinite set of orthonormal vectors $\{e_n\}_{n\in \Nl}\subset \dom(S)$ such that $0<\|S e_n\|\leq 1$. Then, we have $\Psi:=\sum_{n=1}^\infty\frac1n\,\frac{S e_n}{\|S e_n\|}\ot e_n\in\Hil\ot\Hil$ because $\langle\frac{S e_n}{\|S e_n\|}\ot e_n,\frac{S e_m}{\|S e_m\|}\ot e_m\rangle=\delta_{nm}$, but the $S$-crossed version of $1$ would satisfy
		\begin{align*}
			\langle\Psi,\Cross_S(1)(e_1\ot S e_1)\rangle
			&=
			\sum_{n=1}^\infty\frac{1}{n\|S e_n\|}\langle e_n\ot S^*S e_1,e_n\ot e_1\rangle
			\\
			&\geq
			\sum_{n=1}^\infty\frac{\|Se_1\|^2}{n}
			=
			\infty.
		\end{align*}
		Hence $\Cross_S(1)$ does not exist in $\B(\Hil\tp{2})$.
		
		\item For $\dim\Hil<\infty$, all operators in $\CB(\Hil\ot\Hil)$ are $S$-crossable for any $S\in\CS(\Hil)$. In particular, the identity is crossable. To describe $\Cross_S(1)$, we consider an orthonormal basis $\{e_n\}_n$ of $\Hil$ and set
		\begin{align}\label{def:xiS}
			\xi_S := \sum_{n=1}^{\dim\Hil} e_n\ot Se_n.
		\end{align}
		
		Note that $\|\xi_S\|^2 = \Tr(\Delta)$. We consider $P_S := |\xi_S\rangle\langle\xi_S|$, a multiple of the orthogonal projection onto $\Cl\xi_S$, which acts according to
		\begin{align}\label{eq:PS}
			P_S(v\ot w)=\langle Sv,w\rangle\cdot\xi_S.
		\end{align}
		An explicit calculation shows
		\begin{align}\label{eq:CrossId}
			\Cross_S(1)=P_S, \qquad \Cross_S(P_S)=1.
		\end{align}
		In particular, $\xi_S$ depends only on $S$ and not on the orthonormal basis $\{e_n\}_n$. The operator $P_S$ and categorical analogues of it will appear in Section~\ref{sec:symmetry} and Section~\ref{sec:catcross}, respectively.
	\end{enumerate}
\end{example}

After these examples, we establish some basic properties of the crossing map and its domain. Further properties of $\CrossSet_S(\Hil\tp{2})$, in particular relating to the norm topology of $\CB(\Hil\tp{2})$, will be presented in Section \ref{sec:crossingSbdd}.

\begin{proposition}\label{prop:generalcrossing}
	Let $S\in\CS(\Hil)$.
	\begin{enumerate}
		\item The set $\CrossSet_S(\Hil\tp{2})$ of all $S$-crossable operators is a complex linear subspace of $\B(\Hil\tp{2})$ that is dense in the strong and weak operator topologies, and
		\begin{align}
			\Cross_S:\CrossSet_S(\Hil\tp{2})\to\B(\Hil\tp{2})
		\end{align}
		is an injective complex linear map.
		
		\item Consider $\B(\Hil\tp{2})$ with either the norm topology, the strong operator topology, or the weak operator topology. Then, $\Cross_S:\CrossSet_S(\Hil\tp{2})\to \B(\Hil\tp{2})$ is a closed linear map.
		
		\item The $S$-crossing map $\Cross_S$ is continuous with respect to either the norm topology, the strong operator topology, or the weak operator topology if and only if $\dim\Hil<\infty$. Moreover, $\CrossSet_S(\Hil\tp{2})$ is closed if and only if $\dim\Hil<\infty$, in which case $\CrossSet_S(\Hil\tp{2}) = \B(\Hil\tp{2})$.
	\end{enumerate}
\end{proposition}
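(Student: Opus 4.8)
The plan is to treat the three parts in turn, reducing everything to the defining quadratic form $Q_S$ and to the explicit crossed versions recorded in Example~\ref{example:basics}. For part (a), linearity of $\CrossSet_S(\Hil\tp{2})$ and of $\Cross_S$ is immediate, since $T\mapsto Q_S(T)(\xi,\eta)$ is complex linear for each fixed pair of form-domain vectors and the crossed version is unique. For injectivity I would argue that $\Cross_S(T)=0$ forces $Q_S(T)$ to vanish on $\dom(S)\odot\dom(S^*)$; since $S\varphi_1$ ranges over the dense set $\ran(S)=\dom(S)$, $S^*\psi_2$ over $\ran(S^*)=\dom(S^*)$, and $\varphi_2,\psi_1$ over dense sets, the coefficients $\langle a\ot b,T(c\ot d)\rangle$ vanish on dense sets, whence $T=0$. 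For density I would exhibit an explicit strongly (hence weakly) dense subspace of crossable operators: by Example~\ref{example:basics}\,\ref{ex:ProductTimesFlip}, every $(A\ot B)F$ with $A,B$ finite-rank operators whose ranges lie in the dense core $\dom(S)\cap\dom(S^*)$ is crossable (there $S^*B^*S^*$ is bounded). Their linear span equals $\mathcal F_0\,F$, where $\mathcal F_0$ is the span of elementary tensors $|u\rangle\langle u'|\ot|v\rangle\langle v'|$ with $u,u',v,v'\in\dom(S)\cap\dom(S^*)$; this set of finite-rank operators is strongly dense in $\B(\Hil\tp{2})$, and right multiplication by the unitary $F$ is a strong homeomorphism, so density follows.

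For part (b), I would prove closedness for all three topologies simultaneously, exploiting that norm, strong, and weak convergence all imply convergence of every matrix coefficient. If $T_\alpha\to T$ and $\Cross_S(T_\alpha)\to R$ in one of these topologies, then $\langle\varphi_2\ot S^*\psi_2,T_\alpha(S\varphi_1\ot\psi_1)\rangle\to\langle\varphi_2\ot S^*\psi_2,T(S\varphi_1\ot\psi_1)\rangle$ and $\langle\varphi_1\ot\varphi_2,\Cross_S(T_\alpha)(\psi_1\ot\psi_2)\rangle\to\langle\varphi_1\ot\varphi_2,R(\psi_1\ot\psi_2)\rangle$ for all form-domain vectors. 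The left-hand sides coincide with $Q_S(T_\alpha)$, so passing to the limit yields $Q_S(T)(\varphi_1\ot\varphi_2,\psi_1\ot\psi_2)=\langle\varphi_1\ot\varphi_2,R(\psi_1\ot\psi_2)\rangle$; as $R$ is bounded, this says precisely $T\in\CrossSet_S$ and $\Cross_S(T)=R$, i.e.\ the graph is closed.

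For part (c), the finite-dimensional case is settled by Example~\ref{example:basics}: there $\CrossSet_S=\B(\Hil\tp{2})$ is the whole (finite-dimensional) space, hence closed, and $\Cross_S$ is automatically continuous in the unique Hausdorff vector topology. The substance is the infinite-dimensional case, and the engine is a single family of crossable operators with exploding crossed norm. Fixing an orthonormal basis $\{e_i\}$ of $\Hil$ inside $\dom(S)\cap\dom(S^*)$ and the finite-rank projections $P_k$ onto $\newspan(e_1,\dots,e_k)$, a direct computation of $Q_S(P_k\ot P_k)$ gives $\Cross_S(P_k\ot P_k)=|\xi_k\rangle\langle\eta_k|$ with $\xi_k=\sum_{i\le k}S^*e_i\ot e_i$ and $\eta_k=\sum_{i\le k}e_i\ot Se_i$, so that $\|\Cross_S(P_k\ot P_k)\|=\|\xi_k\|\,\|\eta_k\|=\sqrt{\Tr(\Delta^{-1}P_k)\,\Tr(\Delta P_k)}\ge\operatorname{rank}(P_k)=k$, the inequality coming from $\langle e_i,\Delta^{-1}e_i\rangle\langle e_i,\Delta e_i\rangle\ge1$ together with Cauchy--Schwarz. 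Since $\|P_k\ot P_k\|=1$, this rules out norm continuity directly. For the strong and weak topologies I would use that $P_k\ot P_k\to 1$ strongly, hence is a Cauchy sequence for either topology, while a continuous linear map is uniformly continuous and thus sends Cauchy sequences to Cauchy sequences; but a weakly Cauchy (a fortiori strongly Cauchy) sequence of operators is norm-bounded by two applications of the uniform boundedness principle, contradicting $\|\Cross_S(P_k\ot P_k)\|\to\infty$. Non-closedness then follows: in the strong and weak topologies $\CrossSet_S$ is dense by part (a) but proper, as $1\notin\CrossSet_S$ by Example~\ref{example:basics}, so it cannot be closed; and if it were norm-closed it would be a Banach space on which the closed map $\Cross_S$ would be continuous by the closed graph theorem, contradicting the norm-discontinuity just shown.

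The step I expect to be the main obstacle is obtaining one mechanism for the infinite-dimensional discontinuity that works uniformly in the norm, strong, and weak topologies, and in particular covers the antiunitary (bounded $S$) case, where the $(A\ot B)F$ examples do \emph{not} blow up. The family $P_k\ot P_k$ is what resolves this, because its crossed norm grows at least like $\operatorname{rank}(P_k)=k$ irrespective of $S$; feeding this growth into the Cauchy/uniform-boundedness argument converts it simultaneously into failure of continuity and of closedness for all three topologies, while the norm bound alone handles the norm case by elementary unboundedness.
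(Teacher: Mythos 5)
Your proof is correct, and while your parts (a) and (b) follow essentially the paper's own reasoning (part (b) is the identical argument: pass to limits of matrix elements of $T_\alpha$ and of $\Cross_S(T_\alpha)$ on the form domain), your part (c) takes a genuinely different and more unified route. The paper splits the infinite-dimensional obstruction into two separate mechanisms: for the SOT/WOT it combines $1\notin\CrossSet_S$, density of $\CrossSet_S$, and closedness of $\Cross_S$ (a rather terse argument which, spelled out, needs exactly the ingredients you supply: a bounded crossable sequence converging to $1$, uniform continuity, and Banach--Steinhaus to turn a Cauchy image into a bounded limit before closedness can be invoked), while for the norm topology it introduces the family $T_n=(P_1\ot P_n)(\Delta^{1/2}\ot 1)$ and argues through $\Delta^{1/2}\notin\CI_2(\Hil)$ via the modular relation $J\Delta J=\Delta^{-1}$. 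You instead use the single family $P_k\ot P_k$, whose crossed version you compute explicitly as the rank-one operator $|\xi_k\rangle\langle\eta_k|$ with $\xi_k=\sum_{i\le k}S^*e_i\ot e_i$, $\eta_k=\sum_{i\le k}e_i\ot Se_i$, and whose norm $\|\xi_k\|\,\|\eta_k\|\ge k$ follows from $\|Se_i\|\,\|S^*e_i\|\ge\|e_i\|^2=1$ plus Cauchy--Schwarz; this one quantitative, $S$-independent estimate settles the norm case outright and, fed through the uniform-continuity/Cauchy/uniform-boundedness argument, the SOT and WOT cases as well, making explicit what the paper leaves implicit. Your density argument in (a) also differs mildly: the paper compresses an arbitrary $T$ to the crossable cutoffs $P_n^{\ot2}TP_n^{\ot2}$ (which buys a little more, namely that every $T$ is an SOT-limit of crossable compressions of itself), whereas you exhibit the SOT-dense subspace of finite-rank-times-flip operators supplied by Example~\ref{example:basics}~\ref{ex:ProductTimesFlip}; both are valid, as is your handling of non-closedness (closed graph theorem for the norm topology, density plus $1\notin\CrossSet_S$ for SOT/WOT, where the paper instead deduces SOT/WOT non-closedness from the norm case).
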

\begin{proof}
	a) The linearity of $\CrossSet_S(\Hil\tp{2})$ and $\Cross_S$ are clear from the definition, and injectivity of the crossing map follows from $S$ and $S^*$ having dense range. To show the density claim, we consider a sequence $P_n$ of orthogonal projections of finite rank that commute with $S$, are subprojections of the spectral projections $E^\Delta([\frac1n,n])$ of $\Delta$, and converge strongly to $1$ strongly as $n\to\infty$. Then we can estimate for any $T\in\CB(\Hil\ot\Hil)$ and any $\varphi_1,\varphi_2,\psi_1,\psi_2\in\Hil$
	\begin{align*}
		|Q_S(P_n^{\ot2}T P_n^{\ot 2})(\varphi_1\ot\varphi_2,\psi_1\ot\psi_2)|
		&\leq
		\|T\|\|P_n\varphi_2\|\|S^*P_n\psi_2\|\|SP_n\varphi_1\|\|P_n\psi_1\|
		\\
		&\leq
		n\|T\|\|P_n\varphi_2\|\|P_n\psi_2\|\|P_n\varphi_1\|\|P_n\psi_1\|.
	\end{align*}
	Since $P_n$ has finite rank, this estimate readily implies $P_n^{\ot2}T P_n^{\ot 2}\in\CrossSet_S(\Hil\tp{2})$. As $P_n\to1$ strongly, we have $P_n^{\ot2}T P_n^{\ot 2}\to T$ strongly, which proves our claim.
	
	b) Let $(T_n)_n\subset \CrossSet_S$ be a net such that $T_n\to T\in \B(\Hil\tp{2})$ and $\Cross_S(T_n)\to \hat T\in \B(\Hil\tp{2})$ in one of the specified topologies. Taking $\varphi_1, \psi_1\in \dom(S)$ and $\varphi_2, \psi_2\in \dom(S^*)$, we have
	\begin{align*}
		\langle\varphi_1\ot\varphi_2,\hat T \psi_1\ot \psi_2\rangle
		&=
		\lim_{n}\ip{\varphi_1\ot\varphi_2 }{\Cross_S(T_n) \psi_1\ot \psi_2}\\
		&=
		\lim_{n}\ip{\varphi_2\ot S^*\psi_2 }{T_n (S \varphi_1\ot \psi_1)}\\
		&=
		\ip{\varphi_2\ot S^\ast \psi_2 }{T(S \varphi_1\ot \psi_1)}.
	\end{align*}
	Since $\hat T\in \B(\Hil\tp{2})$, we see that $T\in \CrossSet_S(\Hil\tp{2})$ and $\Cross_S(T)=\hat T$. Hence $\Cross_S$ is closed.
	
	c) Let $\dim\Hil=\infty$. It follows from $1\not\in\CrossSet_S$, the WOT/SOT-density of $\CrossSet_S$ in $\B(\Hil\tp{2})$ and the fact that $\Cross_S$ is closed that $\Cross_S$ is not continuous.
	
	For the norm topology, let $S=J\Delta^{1/2}$ be the polar decomposition of $S$ and let $\{e_n\}_{n\in\Nl}$ be an orthonormal basis of $\Hil$ lying in $\dom(S)\cap\dom(S^*)$. Similar to our arguments in a), define $T_n := (P_1\ot P_n)(\Delta^{1/2}\otimes 1)$, where $P_n$ is the orthonormal projection onto $\operatorname{span}\{e_j\}_{j=1}^n$ for fixed $n\in\Nl$. Then we have $$T_n (\varphi\ot \psi)=\ip{\Delta^{1/2}e_1}{\varphi}e_1\ot P_n \psi,$$
	and $T_n\in \B(\Hil\tp{2})$ is a finite rank operator with $\|T_n\|\leq \|\Delta^{1/2}e_1\|$. Moreover, for $\psi\in\dom(S^*)$,
	\begin{align}
		\Cross_S(T_n)(\varphi\ot \psi)
		&=
		\sum_{i,j=1}^\infty \ip{J e_i \ot  e_j}{\Cross_S(T_n)(\varphi\ot \psi)} J e_i \ot e_j\\
		&=\sum_{i,j=1}^\infty \ip{e_j \ot S^*\psi}{T_n (SJe_i\ot \varphi)} J e_i \ot e_j\\
		&=\sum_{i,j=1}^\infty \ip{e_j \ot S^*\psi}{\ip{\Delta^{1/2}e_1}{\Delta^{-1/2}e_i}e_1\ot P_n \varphi} J e_i \ot e_j\\
		&=\ip{SP_n\varphi}{\psi}J e_1 \ot e_1.
	\end{align}
	
	Now, one can easily see that $\Cross_S(T_n)\in \B(\Hil^{\ot2})$ because $P_n\Hil$ is a finite dimensional subspace of $\dom(S)$. In addition, $\|\Cross_S(T_n)\|$ coincides with the norm of $(\ip{SP_n e_i}{e_j})_{i,j\in \Nl}\in \ell^2_{\Nl^2}$ and, although $\|T_n\|$ is uniformly bounded in $n$, $\|\Cross_S(T_n)\|$ will be uniformly bounded in $n$ if and only if $(\ip{Se_i}{e_j})_{i,j\in \Nl}\in \ell^2_{\Nl^2}\Leftrightarrow \Delta^{1/2}\in \CI_2(\Hil)$, where $\CI_2(\Hil)$ stands for the Hilbert--Schmidt ideal in $\B(\Hil)$. Since the relation $J\Delta J=\Delta^{-1}$ implies that $\Delta^{1/2}\notin \CI_2(\Hil)$, unless $\dim\Hil<\infty$, we conclude that $\Cross_S(T)$ must be unbounded.
	
	Finally, it follows from the Closed Graph Theorem that $\CrossSet_S$ cannot be closed in norm, hence neither in the strong nor weak operator topologies.
\end{proof}

\begin{remark}
	Let $S\in\CS(\Hil)$. In general, the subspace $\CrossSet_S(\Hil\tp{2})$ is neither a subalgebra of $\B(\Hil\tp{2})$, nor $*$-stable: Indeed, the flip $F$ satisfies $F^2 = 1$ and~$F$ is always $S$-crossable, while $1$ is $S$-crossable if and only if $\dim\Hil < \infty$ by Example~ ~\ref{example:basics} a). If $S$ is not bounded, one can use Example \ref{example:basics} b) to show that $\CrossSet_S(\Hil\tp{2})$ is not $*$-invariant: Choose $A,B\in \B(\Hil)$ such that $S^\ast B^\ast S^\ast\in \B(\Hil)$ and $S^\ast A S^\ast\not\in \B(\Hil)$. Then $T:=(A\ot B)F \in \CrossSet_S(\Hil\ot\Hil)$ and $T^* \notin \CrossSet_S(\Hil\ot\Hil)$.
\end{remark}

Given $S\in\CS(\Hil)$ and its polar decomposition $S=J\Delta^{1/2}$, also $J\in\CS(\Hil)$ is an antilinear (and antiunitary) involution. We now consider the relation between the crossing maps $\Cross_S$ and $\Cross_J$, and their link to the KMS condition from Tomita--Takesaki modular theory. This connection to the KMS condition was the primary use of crossing symmetry in the context of twisted Araki--Woods von Neumann algebras \cite{CorreaDaSilvaLechner:2023}, and will be used in the present paper in Section~\ref{sec:endos}.

\begin{proposition}\label{prop:KMS-likeCrossing}
	Let $S\in \CS(\Hil)$ with $S=J\Delta^{1/2}$ its polar decomposition, $T\in\CrossSet_S$, and $\psi_1,\ldots,\psi_4\in\Hil$. Then the function $T^{\psi_1 \psi_2}_{\psi_3 \psi_4}:\Rl\to\Cl$, defined by
	\begin{align}
		T^{\psi_1 \psi_2}_{\psi_3 \psi_4}(t):=\ip{\psi_1 \ot \psi_2}{T(t)\psi_3 \ot \psi_4}
		,\qquad
		T(t):=(1\ot \Delta^{-it})T(\Delta^{it}\ot 1 ),
	\end{align}
	has a bounded analytic extension to the strip $\Strip_{1/2} := \{z\in\Cl\,:\,0<\Im(z)<\frac{1}{2}\}$ satisfying the boundary condition, $t\in\Rl$,
	\begin{align}\label{eq:bndcond}
		T^{\psi_1 \psi_2}_{\psi_3 \psi_4}\left(t+\tfrac{i}{2}\right)
		=
		\ip{J\psi_3 \ot \psi_1}{(\Delta^{-it}\ot 1)\Cross_S(T)(1\ot \Delta^{it})(\psi_4 \ot J\psi_2)}.
	\end{align}
\end{proposition}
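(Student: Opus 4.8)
The plan is to recognize $T^{\psi_1\psi_2}_{\psi_3\psi_4}(t)$ as a matrix element of the modular flow and to produce the analytic extension by a three-lines argument, reading off the upper boundary value via the defining relation of $\Cross_S$. First I would move the unitaries $\Delta^{\pm it}$ across the inner product to rewrite
\begin{align*}
	T^{\psi_1\psi_2}_{\psi_3\psi_4}(t)=\ip{\psi_1\ot\Delta^{it}\psi_2}{T(\Delta^{it}\psi_3\ot\psi_4)},
\end{align*}
which suggests the candidate extension
\begin{align*}
	F(z):=\ip{\psi_1\ot\Delta^{i\bar z}\psi_2}{T(\Delta^{iz}\psi_3\ot\psi_4)},\qquad z\in\overline{\Strip_{1/2}}.
\end{align*}
Because the first tensor slot is antilinear, the spectral calculus turns $\Delta^{i\bar z}\psi_2$ in the bra into a factor $e^{-iz\lambda}$, so the apparent $\bar z$-dependence is in fact holomorphic in $z$; thus $F$ is a genuine holomorphic candidate wherever it converges.

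Next I would prove all claims first for $\psi_2\in\dom(\Delta^{1/2})=\dom(S)$ and $\psi_3\in\dom(\Delta^{-1/2})=\dom(S^*)$ (both dense), with $\psi_1,\psi_4$ arbitrary. For such vectors $\Delta^{iz}\psi_3$ and $\Delta^{i\bar z}\psi_2$ are defined and continuous on the closed strip and holomorphic inside, so $F$ is holomorphic on $\Strip_{1/2}$ and continuous up to the boundary; log-convexity of $s\mapsto\|\Delta^s\psi\|$ bounds $\|\Delta^{s}\psi_2\|$ and $\|\Delta^{-s}\psi_3\|$ uniformly for $s\in[0,\tfrac12]$, so $F$ is bounded on the strip. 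By construction $F(t)=T^{\psi_1\psi_2}_{\psi_3\psi_4}(t)$ and $F(t+\tfrac i2)=\ip{\psi_1\ot\Delta^{it+1/2}\psi_2}{T(\Delta^{it-1/2}\psi_3\ot\psi_4)}$.

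The algebraic heart is to match $F(t+\tfrac i2)$ with the asserted right-hand side. Starting from the right-hand side of \eqref{eq:bndcond}, I would move the unitaries into the bra and use $J\Delta^{it}=\Delta^{it}J$ (from Remark~\ref{remark:modulartheory}) to reach $\ip{J\Delta^{it}\psi_3\ot\psi_1}{\Cross_S(T)(\psi_4\ot J\Delta^{it}\psi_2)}$, and then apply the defining relation of $\Cross_S$ from Definition~\ref{def:Scrossable}. That relation is applicable precisely because $J$ maps $\dom(\Delta^{-1/2})$ onto $\dom(\Delta^{1/2})=\dom(S)$ and $\dom(\Delta^{1/2})$ onto $\dom(\Delta^{-1/2})=\dom(S^*)$, so $J\Delta^{it}\psi_3\in\dom(S)$ and $J\Delta^{it}\psi_2\in\dom(S^*)$. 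Using the identities $SJ=\Delta^{-1/2}$ and $S^*J=\Delta^{1/2}$ (immediate from $S=J\Delta^{1/2}$ and $J\Delta^{1/2}=\Delta^{-1/2}J$) then yields exactly $\ip{\psi_1\ot\Delta^{it+1/2}\psi_2}{T(\Delta^{it-1/2}\psi_3\ot\psi_4)}=F(t+\tfrac i2)$, establishing the proposition for the dense set of such $\psi_2,\psi_3$.

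Finally I would remove the domain restriction by density together with the maximum modulus principle on the strip, which I expect to be the main technical point. Approximating $\psi_2,\psi_3$ by sequences in $\dom(S),\dom(S^*)$ produces bounded holomorphic functions $F_n$ on $\Strip_{1/2}$. On the lower boundary $\sup_t|F_n(t)-F_m(t)|$ is controlled by $\|T\|$ times $\Hil$-norms of the differences, and on the upper boundary $\sup_t|F_n(t+\tfrac i2)-F_m(t+\tfrac i2)|$ is controlled by $\|\Cross_S(T)\|$ times $\Hil$-norms of the differences, since $J$ and $\Delta^{it}$ are isometric; it is exactly the boundedness of $\Cross_S(T)$ (i.e.\ $T\in\CrossSet_S$) that keeps the upper boundary under control for arbitrary $\psi_i$. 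Hence both boundary sequences are uniformly Cauchy on $\Rl$, and Phragm\'en--Lindel\"of for bounded analytic functions on the strip forces $(F_n)$ to converge uniformly on $\overline{\Strip_{1/2}}$ to a bounded holomorphic limit $F$ with the two claimed boundary functions. This $F$ is the desired extension.
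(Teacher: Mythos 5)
Your proposal is correct and follows essentially the same route as the paper's proof: establish holomorphy and the boundary identity on a dense set of vectors (you take $\psi_2\in\dom(S)$, $\psi_3\in\dom(S^*)$ with a log-convexity bound, where the paper takes entire analytic vectors for $t\mapsto\Delta^{it}$), using the identical algebraic step $SJ=\Delta^{-1/2}$, $S^*J=\Delta^{1/2}$ together with $J\Delta^{it}=\Delta^{it}J$ fed into the defining relation of $\Cross_S$, and then remove the restriction via uniform boundary bounds controlled by $\|T\|$ and $\|\Cross_S(T)\|$ combined with the maximum principle on the strip. The only differences are cosmetic: the paper runs the approximation in the two slots sequentially rather than simultaneously, and leaves the three-lines/Phragm\'en--Lindel\"of step implicit where you make it explicit.
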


\begin{proof}
	Let $\psi_1,\ldots,\psi_4\in\Hil$ be vectors, initially with $\psi_2,\psi_3$ entire analytic for $t\mapsto\Delta^{it}$.
	Then $T^{\psi_1 \psi_2}_{\psi_3 \psi_4}$ has an entire extension, and for $t\in\Rl$ we have
	\begin{align*}
		T^{\psi_1 \psi_2}_{\psi_3 \psi_4}\left(t+\tfrac{i}{2}\right)
		&=
		\ip{\psi_1 \ot \Delta^{1/2}\Delta^{it}\psi_2}{T(\Delta^{-1/2}\Delta^{it}\psi_3  \ot \psi_4)}
		\\
		&=\ip{\psi_1 \ot S^*J\Delta^{it}\psi_2}{T(SJ\Delta^{it}\psi_3  \ot \psi_4)}
		\\
		&=\ip{J\Delta^{it}\psi_3 \ot \psi_1}{\Cross_S(T)(\psi_4 \ot J\Delta^{it}\psi_2)}
		\\
		&=\ip{J\psi_3 \ot \psi_1}{(\Delta^{-it}\ot 1)\Cross_S(T)(1\ot \Delta^{it})(\psi_4 \ot J\psi_2)}
	\end{align*}
	which is the required boundary condition \eqref{eq:bndcond}. To remove the domain assumption on $\psi_2$ and $\psi_3$, we note
	\begin{align*}
		\sup_{t\in\Rl}\left\{\left\|T^{\psi_1 \psi_2}_{\psi_3 \psi_4}\left(t+\tfrac{i}{2}\right)\right\|,	\left\|T^{\psi_1 \psi_2}_{\psi_3 \psi_4}(t)\right\|\right\}\leq (\|T\|+\|\Cross_S(T)\|) \|\psi_1\|\|\psi_2\|\|\psi_3\|\|\psi_4\|.
	\end{align*}
	Therefore, if $\psi_2,\psi_3 \in \Hil$ are two arbitrary vectors and $(\psi_{2,n})_n,(\psi_{3,n})_n\subset \Hil$ are sequences of analytic vectors for $\Delta$ with $\psi_{2,n}\to \psi_2$ and $\psi_{3,m}\to \psi_3$, then
	$(T^{\psi_1 \psi_{2,n}}_{\psi_{3,m} \psi_4})_n$ is a uniform Cauchy sequence (in $n\in \Nl$, for every fixed $m\in\Nl$) of analytic functions in the interior of the strip~$\Strip_{1/2}$. Therefore, its limit $T^{\psi_1 \psi_2}_{\psi_{3,m} \psi_4}$ is analytic in the interior of the strip and it is easy to check that it satisfies \eqref{eq:bndcond}. The same argument applied to the sequence $(T^{\psi_1 \psi_2}_{\psi_{3,m} \psi_4})_m$ proves the claim.
\end{proof}

Conversely, if $T\in\B(\Hil\tp{2})$ satisfies the analyticity and boundary conditions in the thesis of Proposition \ref{prop:KMS-likeCrossing}, we can relate $\Cross_S(T)$ with $\Cross_J(T)$. Namely, for vectors $\psi_1,\ldots,\psi_4\in\Hil$ with $\psi_2,\psi_3$ entire analytic for $t\mapsto\Delta^{it}$, we can write the analytic extension of $T^{\psi_1,\psi_2}_{\psi_3,\psi_4}(t+\frac{i}{2})$ to the point~$-\frac{i}{2}$ in two different ways using \eqref{eq:bndcond}:
\begin{align*}
	T^{\psi_2,J\psi_4}_{J\psi_1,\psi_3}\left(-\tfrac{i}{2}+\tfrac{i}{2}\right)
	&=\ip{\psi_1 \ot \psi_2}{(\Delta^{-1/2}\ot 1)\Cross_S(T)(1\ot \Delta^{1/2})(\psi_3 \ot \psi_4)}\\
	&=T^{\psi_2,J\psi_4}_{J\psi_1,\psi_3}(0)\\
	&=\ip{\psi_2 \ot J\psi_4}{T(J\psi_1 \ot \psi_3)}\\
	&=\ip{\psi_1 \ot \psi_2}{ \Cross_J(T)(\psi_3 \ot \psi_4)}.
\end{align*}
That means that, whenever $T\in \CrossSet_J\cap \CrossSet_S$, we have
$$(\Delta^{-1/2}\ot 1)\Cross_S(T)(1\ot \Delta^{1/2})\subset \Cross_J(T)$$
or, equivalently,
$$(\Delta^{1/2}\ot 1)\Cross_J(T)(1\ot \Delta^{-1/2})\subset \Cross_S(T).$$
In particular, if $T\in \CrossSet_J\cap \CrossSet_S$, then the operators $(\Delta^{1/2}\ot 1)\Cross_J(T)(1\ot \Delta^{-1/2})$ and $(\Delta^{-1/2}\ot 1)\Cross_S(T)(1\ot \Delta^{1/2})$ are bounded. The next proposition contains this observation and its converse.

\begin{proposition}\label{prop:CrossableAndBounded}
	Let $S=J\Delta^{1/2}\in\CS(\Hil)$.
	\begin{enumerate}
		\item If $T\in \CrossSet_J\cap \CrossSet_S$, then the operators
		$(\Delta^{1/2}\ot 1)\Cross_J(T)(1\ot \Delta^{-1/2})$ and $(\Delta^{-1/2}\ot 1)\Cross_S(T)(1\ot \Delta^{1/2})$ are bounded.
		\item If $T\in\CrossSet_S$ and $\|(1\ot\Delta^{1/2})T(\Delta^{-1/2}\ot1)\|<\infty$, then $(1\ot\Delta^{1/2})T(\Delta^{-1/2}\ot1)$ is $J$-crossable, and $$\Cross_J((1\ot\Delta^{1/2})T(\Delta^{-1/2}\ot1))=\Cross_S(T).$$
		\item If $T\in\CrossSet_J$ and $\|(\Delta^{1/2}\ot1)\Cross_J(T)(1\ot\Delta^{-1/2})\|<\infty$, then $T\in\CrossSet_S$ and $$(\Delta^{1/2}\ot 1)\Cross_J(T)(1\ot \Delta^{-1/2})=\Cross_S(T).$$
	\end{enumerate}
\end{proposition}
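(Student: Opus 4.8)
The plan is to settle (a) and (b) quickly using the form computation already set up above the statement, and to spend the real effort on (c), where the unbounded modular operators block the naive argument. For (a) I would simply read off the conclusion from the two inclusions established immediately before the proposition: if $T\in\CrossSet_J\cap\CrossSet_S$ then $(\Delta^{1/2}\ot 1)\Cross_J(T)(1\ot\Delta^{-1/2})\subset\Cross_S(T)$ and $(\Delta^{-1/2}\ot1)\Cross_S(T)(1\ot\Delta^{1/2})\subset\Cross_J(T)$, so each left-hand operator is a restriction of a bounded operator and hence bounded on its (dense) domain.

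For (b), write $T':=(1\ot\Delta^{1/2})T(\Delta^{-1/2}\ot1)$. The first step is the form identity $Q_J(T')=Q_S(T)$ on $\dom(S)\odot\dom(S^*)$: using $S=J\Delta^{1/2}$, $S^*=J\Delta^{-1/2}$ and the modular relations $\Delta^{-1/2}J=J\Delta^{1/2}$, $\Delta^{1/2}J=J\Delta^{-1/2}$, one computes $\ip{\varphi_2\ot J\psi_2}{T'(J\varphi_1\ot\psi_1)}=\ip{\varphi_2\ot S^*\psi_2}{T(S\varphi_1\ot\psi_1)}$, which equals $\ip{\varphi_1\ot\varphi_2}{\Cross_S(T)(\psi_1\ot\psi_2)}$ since $T\in\CrossSet_S$. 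Because $T'$ and $J$ are bounded, the left-hand side $\ip{\varphi_2\ot J\psi_2}{T'(J\varphi_1\ot\psi_1)}$ is separately continuous in each of $\varphi_1,\varphi_2,\psi_1,\psi_2$, as is the right-hand side; extending one vector at a time from the dense subspaces $\dom(S),\dom(S^*)$ to all of $\Hil=\dom(J)=\dom(J^*)$ then yields the crossability identity for $T'$, i.e.\ $T'\in\CrossSet_J$ with $\Cross_J(T')=\Cross_S(T)$.

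The hard part is (c). The analogous computation gives, for $\varphi_1,\psi_1\in\dom(S)$ and $\varphi_2,\psi_2\in\dom(S^*)$,
\[
Q_S(T)(\varphi_1\ot\varphi_2,\psi_1\ot\psi_2)
=\ip{(\Delta^{1/2}\ot1)(\varphi_1\ot\varphi_2)}{\Cross_J(T)(1\ot\Delta^{-1/2})(\psi_1\ot\psi_2)},
\]
where I have used $T\in\CrossSet_J$ together with the fact that $\Delta^{1/2}\varphi_1$ and $\Delta^{-1/2}\psi_2$ are honest vectors. Unlike in (b), the unbounded factors $\Delta^{1/2}$ and $\Delta^{-1/2}$ now sit on the \emph{vector} arguments, so no continuity extension is available and one cannot directly read off the right-hand side as a bounded operator applied to $\psi_1\ot\psi_2$. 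The role of the boundedness hypothesis is precisely that the sesquilinear form $(u,v)\mapsto\ip{(\Delta^{1/2}\ot1)u}{\Cross_J(T)(1\ot\Delta^{-1/2})v}$ on $\dom(\Delta^{1/2}\ot1)\times\dom(1\ot\Delta^{-1/2})$ is bounded; since $\Delta^{1/2}\ot1$ and $1\ot\Delta^{-1/2}$ are self-adjoint, this form is represented by the bounded operator $R:=(\Delta^{1/2}\ot1)\Cross_J(T)(1\ot\Delta^{-1/2})$, that is $\ip{(\Delta^{1/2}\ot1)u}{\Cross_J(T)(1\ot\Delta^{-1/2})v}=\ip{u}{Rv}$. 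Feeding this into the displayed identity turns its right-hand side into $\ip{\varphi_1\ot\varphi_2}{R(\psi_1\ot\psi_2)}$, which is exactly the assertion $T\in\CrossSet_S$ with $\Cross_S(T)=R$.

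The step demanding the most care is this last identification of the a priori only densely defined product with the genuine bounded operator $R$, i.e.\ the legitimacy of transporting the unbounded self-adjoint factors onto $\Cross_J(T)$. If one prefers to read the hypothesis as boundedness of the operator on its composition domain rather than of the form, I would upgrade it by a spectral cutoff: approximate $T$ by $T_n:=(P_n\ot P_n)T(P_n\ot P_n)$ with $P_n$ finite-rank projections whose ranges lie in $\ran E^\Delta([\tfrac1n,n])$ and with $P_n\to1$ strongly, so that $T_n\in\CrossSet_S\cap\CrossSet_J$ by the estimate of Proposition~\ref{prop:generalcrossing}(a); one then checks $T_n\to T$ and $\Cross_S(T_n)\to R$ in the weak operator topology and invokes the closedness of $\Cross_S$ from Proposition~\ref{prop:generalcrossing}(b) to conclude $T\in\CrossSet_S$ and $\Cross_S(T)=R$.
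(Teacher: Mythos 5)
Your proposal is correct and follows essentially the same route as the paper: part (a) is read off from the inclusions derived in the discussion preceding the proposition, and parts (b) and (c) are the same direct computations using $S=J\Delta^{1/2}$, $S^*=J\Delta^{-1/2}$ and the modular relation $J\Delta^{1/2}=\Delta^{-1/2}J$ to identify $Q_S(T)$ with the relevant $J$-crossing form. The only difference is that you make explicit the domain bookkeeping the paper leaves implicit in its chain of equalities --- reading the boundedness hypothesis in (c) as boundedness of the sesquilinear form so that it is represented by a bounded operator $R$, plus the optional spectral-cutoff/closedness fallback --- which is a welcome clarification rather than a different argument.
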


\begin{proof} The first statement has already been discussed as a consequence of Proposition \ref{prop:KMS-likeCrossing}. The remaining statements follow from two straightforward calculations. For every $\psi_1, \varphi_1\in \dom(S)$ and $\psi_2, \varphi_2\in \dom(S^*)$, we have for $b)$
	\begin{align*}
		\ip{\psi_1\ot \psi_2}{\Cross_S(T)(\varphi_1\ot \varphi_2)}
		&=
		\langle\psi_2 \ot S^*\varphi_2,T (S\psi_1\ot \varphi_1)\rangle
		\\
		&=
		\langle\psi_2\ot J\varphi_2,(1\ot \Delta^{1/2})T(\Delta^{-1/2}\ot 1)(J\psi_1\ot \varphi_1)\rangle
		\\
		&=
		\langle\psi_1\ot \psi_2,\Cross_J( (1\ot \Delta^{1/2})T(\Delta^{-1/2}\ot 1)) (\varphi_1\ot \varphi_2)\rangle
	\end{align*}
	and for $c)$ we have
	\begin{align*}
		\langle\psi_1\ot \psi_2,(\Delta^{1/2} \ot 1)&\Cross_J(T)(1\ot \Delta^{-1/2})(\varphi_1\ot \varphi_2)\rangle \\
		&=
		\langle\Delta^{1/2}\psi_1\ot \psi_2,\Cross_J(T)(\varphi_1\ot \Delta^{-1/2}\varphi_2)\rangle
		\\
		&=
		\langle\psi_2\ot J\Delta^{-1/2}\varphi_2,T(J\Delta^{1/2}\psi_1\ot \varphi_1)\rangle\\
		&=
		\ip{\psi_1\ot \psi_2}{\Cross_S(T)(\varphi_1\ot \varphi_2)}.
	\end{align*}
\end{proof}

In general, one cannot improve the results of Proposition \ref{prop:CrossableAndBounded} without additional assumptions, namely, there are examples even of $J$-crossing symmetric operators which are not $S$-crossable for unbounded $S$. For bounded $S$, instead, we shall see in the next section in particular that $\CrossSet_S = \CrossSet_J$.

\section{The crossing map for bounded involutions}\label{sec:crossingSbdd}

In the previous section we have established the main features of the crossing map that hold for general $S\in\CS(\Hil)$. In case $S=J\Delta^{1/2}$ is bounded (i.e.\ $\Delta$ is bounded) or even antiunitary (i.e.\ $\Delta=1$), several questions simplify and new features appear. In this section we therefore concentrate on the case of a bounded antilinear involution $S\in\CS(\Hil)$. In case $\Delta=1$ and $\dim\Hil<\infty$, a related analysis was carried out by Peschik \cite{Peschik:2023}.

The first observation is that for bounded $S$, the domain $\CrossSet_S(\Hil \otimes \Hil)$ of the crossing map is independent of $S$. In fact, if $S_1,S_2$ are two bounded antilinear involutions on $\Hil$, a direct calculation shows
\begin{align}\label{eq:relationS1S2}
	\CrossSet_{S_1}(\Hil\ot\Hil)
	&=
	\CrossSet_{S_2}(\Hil\ot\Hil),
	\\
	\Cross_{S_1}(T)
	&=
	(S_1^*S_2^* \ot1)\Cross_{S_2}(T)(1\ot S_2^*S_1^*).
\end{align}
We will therefore write $\CrossSetb(\Hil\tp{2})$ or just $\CrossSetb$ instead of $\CrossSet_S(\Hil\tp{2})$ for bounded $S$.

The domain $\CrossSetb$ can be characterized as follows.

\begin{lemma}\label{lemma:crossability}
	Let $S\in\CS(\Hil)$ be bounded and $T\in\B(\Hil\tp{2})$. Then the following statements are equivalent:
	\begin{enumerate}
		\item $T\in\CrossSetb$.
		\item For any $\psi_1,\psi_2\in\Hil$, the map $A_{\psi_1\ot\psi_2}:\Hil\odot\Hil\to\Cl$,
		\begin{align}
			A_{\psi_1\ot\psi_2}:
			\varphi_1\ot\varphi_2
			\mapsto
			\langle\varphi_2\ot S^*\psi_2,T(S\varphi_1\ot\psi_1)\rangle
		\end{align}
		extends continuously and bi-antilinearly to $\Hil\ot\Hil$, and, for any $\Phi\in\Hil\ot\Hil$, the map $\psi_1\ot\psi_2\mapsto A_{\psi_1\ot\psi_2}(\Phi)$ extends continuously and bilinearly to $\Hil\ot\Hil$.
		\item There exists $c>0$ such that for all $a,b\in\ell^2_{\Nl^2}$,
		\begin{align}\label{eq:Cdom1}
			\left|\sum_{i,j,k,l=1}^\infty a_{ij}\,T^{jl}_{ik}\,b_{kl}\right|
			\leq
			c\|a\|_{\ell^2_{\Nl^2}}\|b\|_{\ell^2_{\Nl^2}},
		\end{align}
		where $T^{jl}_{ik}:=\langle e_j\ot e_l,T(e_i\ot e_k)\rangle$ for some arbitrary orthonormal basis $\{e_n\}_n$ of $\Hil$.
	\end{enumerate}
	In particular, 
	if $T\in\CrossSetb$, then $T^*\in\CrossSetb$ by observing that $(T^*)^{jl}_{ik} = \overline{T_{jl}^{ik}}$.
	
	Moreover, if $S$ is antiunitary (i.e.\ $S=J$) and the orthonormal basis is chosen such that $Se_k=e_k$,
	we have the formula $\Cross_S(T)^{ij}_{kl} = T^{jl}_{ik}$.
\end{lemma}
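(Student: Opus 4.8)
The plan is to prove the cycle (a) $\Rightarrow$ (b) $\Rightarrow$ (a), then the equivalence (a) $\Leftrightarrow$ (c), and finally to read off the $T^*$-statement and the matrix formula as by-products of the computation behind (c). Throughout I use that, since $S$ is bounded, $\dom(S)=\dom(S^*)=\Hil$, so all forms are defined on full tensor factors.

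The implication (a) $\Rightarrow$ (b) is immediate: if $\Cross_S(T)$ is bounded then $A_{\psi_1\ot\psi_2}(\Phi)=\langle\Phi,\Cross_S(T)(\psi_1\ot\psi_2)\rangle$, which extends continuously and bi-antilinearly in $\Phi$ because $\Cross_S(T)(\psi_1\ot\psi_2)$ is a fixed vector, and for fixed $\Phi$ depends continuously and bilinearly on $\psi_1\ot\psi_2$ because $\psi_1\ot\psi_2\mapsto\Cross_S(T)(\psi_1\ot\psi_2)$ is the restriction of a bounded operator. The reverse implication (b) $\Rightarrow$ (a) is the functional-analytic core. The first clause of (b) makes each $A_{\psi_1\ot\psi_2}$ a bounded antilinear functional on $\Hil\ot\Hil$, so Riesz representation produces a unique vector $R(\psi_1\ot\psi_2)$ with $A_{\psi_1\ot\psi_2}(\Phi)=\langle\Phi,R(\psi_1\ot\psi_2)\rangle$; the bilinearity in the second clause of (b) lets me extend $R$ to a linear map on the algebraic tensor product $\Hil\odot\Hil$. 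That same clause says that for each fixed $\Phi$ the map $\Psi\mapsto\langle\Phi,R\Psi\rangle$ is bounded, so applying the uniform boundedness principle to the family $\{\langle\,\cdot\,,R\Psi\rangle:\|\Psi\|\le1,\ \Psi\in\Hil\odot\Hil\}$ of bounded functionals yields $\sup_{\|\Psi\|\le1}\|R\Psi\|<\infty$; hence $R$ extends to a bounded operator, and setting $\Cross_S(T):=R$ gives exactly the defining identity. The point I want to stress is that boundedness of the crossing form merely on simple tensors would be strictly weaker than boundedness on all of $\Hil\ot\Hil$; it is precisely the two separate-continuity clauses of (b), fed into uniform boundedness, that close this gap.

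For (a) $\Leftrightarrow$ (c) I exploit that $\CrossSetb$ is independent of the bounded involution, as recorded in \eqref{eq:relationS1S2}. Given the orthonormal basis $\{e_n\}$ appearing in (c), I replace $S$ by the antiunitary involution $J_0$ of complex conjugation in that basis, $J_0 e_n=e_n$, so that $T\in\CrossSetb\iff T\in\CrossSet_{J_0}$. Since $J_0^*=J_0$ and $J_0 e_n=e_n$, expanding $Q_{J_0}(T)$ sesquilinearly in the basis gives, for $\Phi,\Psi\in\Hil\ot\Hil$ with coordinates $\Phi_{ij}=\langle e_i\ot e_j,\Phi\rangle$ and $\Psi_{kl}=\langle e_k\ot e_l,\Psi\rangle$, the identity $Q_{J_0}(T)(\Phi,\Psi)=\sum_{i,j,k,l}\overline{\Phi_{ij}}\,T^{jl}_{ik}\,\Psi_{kl}$. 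Setting $a_{ij}=\overline{\Phi_{ij}}$ and $b_{kl}=\Psi_{kl}$ is a norm-preserving bijection between $\Hil\ot\Hil$ and $\ell^2_{\Nl^2}$, so this identifies the crossing form with the bilinear form of (c); consequently $T\in\CrossSet_{J_0}$, i.e.\ this form is bounded on $\Hil\ot\Hil$, is equivalent to (c). (I read (c) as an estimate for finitely supported $a,b$ and extend by density.) Specializing $\Phi=e_i\ot e_j$ and $\Psi=e_k\ot e_l$ in the same computation reads off $\Cross_{J_0}(T)^{ij}_{kl}=T^{jl}_{ik}$, which is exactly the ``Moreover'' formula in the antiunitary case. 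Finally, the $T^*$-statement follows from the symmetry of (c): using $(T^*)^{jl}_{ik}=\overline{T^{ik}_{jl}}$, the form for $T^*$ becomes, after complex conjugation and the relabelling $(i,j,k,l)\mapsto(j',i',l',k')$, a form of type (c) for $T$ evaluated at $\tilde a_{i'j'}=\overline{a_{j'i'}}$ and $\tilde b_{k'l'}=\overline{b_{l'k'}}$, which have the same $\ell^2_{\Nl^2}$-norms as $a,b$; hence (c) for $T$ gives (c) for $T^*$, so $T^*\in\CrossSetb$.

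I expect the main obstacle to be the (b) $\Rightarrow$ (a) step: turning a separately continuous crossing form into a genuine bounded operator, and in particular being careful that the Riesz vector and the operator $R$ live a priori only on the algebraic tensor product $\Hil\odot\Hil$ before uniform boundedness upgrades $R$ to a bounded operator. The index bookkeeping in (a) $\Leftrightarrow$ (c) and in the $T^*$-claim is routine, but it is exactly where conjugation and relabelling errors are most likely, so I would write those sums out explicitly.
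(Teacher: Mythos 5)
Your proof is correct and follows essentially the same route as the paper's: reduce to the antiunitary involution fixing the given basis via the $S$-independence of $\CrossSetb$ recorded in \eqref{eq:relationS1S2}, identify condition b) with boundedness of the sesquilinear form $Q_S(T)$, and read c), the matrix formula, and the $T^*$-statement off the basis expansion. The paper compresses the equivalence $a)\Leftrightarrow b)$ into a single sentence, and your Riesz-plus-Banach--Steinhaus argument (together with the finitely-supported-then-density reading of c)) is exactly the standard justification it leaves implicit.
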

\begin{proof}
	According to our previous remarks, we may assume that $S$ is antiunitary without loss of generality.
	
	The continuous extension properties in $b)$ exactly specify that the quadratic form $Q_S(T)$ is given by a bounded operator, i.e.\ $a)\Leftrightarrow b)$.
	
	The equivalence $a)\Leftrightarrow c)$ follows by observing that it is independent of the orthonormal basis $\{e_n\}_n$ chosen, taking $S$ to be the antiunitary involution fixed by $Se_n=e_n$, and by the fact that antiunitary operators map orthonormal bases to orthonormal bases.
\end{proof}

As the identity is never crossable in infinite dimensions (Example~\ref{example:basics}~d)), we know in particular $\CrossSetb\neq\CB(\Hil\tp{2})$. The following statement strengthens this.

\begin{corollary}\label{cor:notdense}
	Let $S\in\CS(\Hil)$ be bounded and $\dim\Hil=\infty$. Then
	\begin{align}
		\inf_{T\in\CrossSetb}\|T-1\|=1.
	\end{align}
	In particular, $\CrossSetb$ is not norm dense in $\B(\Hil^{\ot 2})$.
\end{corollary}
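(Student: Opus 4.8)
The plan is to separate the two inequalities hidden in the stated equality. The bound $\inf_{T\in\CrossSetb}\|T-1\|\le 1$ is immediate, since $\CrossSetb$ is a linear subspace and hence $0\in\CrossSetb$, with $\|0-1\|=1$. All the work is therefore in the lower bound: I want to show $\|T-1\|\ge 1$ for \emph{every} $T\in\CrossSetb$. For this I will use the matrix-element criterion of Lemma~\ref{lemma:crossability}~c), which characterizes crossability by the existence of a constant $c>0$ such that $\bigl|\sum_{i,j,k,l}a_{ij}T^{jl}_{ik}b_{kl}\bigr|\le c\|a\|_{\ell^2_{\Nl^2}}\|b\|_{\ell^2_{\Nl^2}}$ for all $a,b\in\ell^2_{\Nl^2}$, where $T^{jl}_{ik}=\langle e_j\ot e_l,T(e_i\ot e_k)\rangle$. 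Since this criterion does not refer to $S$ at all, I never need to distinguish the various bounded involutions.

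The key step is to feed in the same diagonal, finitely supported test data that already obstructs crossability of the identity in Example~\ref{example:basics}~d), now used quantitatively. Fix an orthonormal basis $\{e_n\}_n$ and, for $n\in\Nl$, set $a_{ij}=b_{ij}:=\tfrac{1}{\sqrt n}\,\delta_{ij}$ for $i,j\le n$ and $a_{ij}=b_{ij}:=0$ otherwise, so that $\|a\|_{\ell^2_{\Nl^2}}=\|b\|_{\ell^2_{\Nl^2}}=1$. Writing $D:=1-T$, so that $\|D\|=\|T-1\|$, I split the quadratic form into its identity part and its $D$-part. Because $1^{jl}_{ik}=\delta_{ji}\delta_{lk}$, the identity contributes $\sum_{i,k}a_{ii}b_{kk}=\bigl(\sum_{i\le n}\tfrac1{\sqrt n}\bigr)^2=n$. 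The $D$-part equals $\tfrac1n\sum_{i,k\le n}D^{ik}_{ik}=\tfrac1n\sum_{i,k\le n}\langle e_i\ot e_k,D(e_i\ot e_k)\rangle$; since each diagonal matrix element satisfies $|D^{ik}_{ik}|\le\|D\|$ and there are $n^2$ of them, its modulus is at most $n\|D\|$. By the reverse triangle inequality the whole form has modulus at least $n-n\|D\|=n(1-\|D\|)$.

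Finally I conclude by contradiction: if some $T\in\CrossSetb$ had $\|T-1\|=\|D\|<1$, then the crossability constant $c$ would have to satisfy $n(1-\|D\|)\le c\,\|a\|\|b\|=c$ for all $n\in\Nl$, which is absurd because the left-hand side diverges. Hence $\|T-1\|\ge 1$ for all crossable $T$, and together with the upper bound this gives $\inf_{T\in\CrossSetb}\|T-1\|=1$. Norm density then fails at once, since $1$ lies at distance $1$ from $\CrossSetb$ and so cannot be a norm limit of crossable operators.

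As for the main obstacle, the argument is short once the right functional is identified, so the only genuine subtlety is choosing the test data so that the identity's diagonal contribution grows strictly faster in $n$ than the Hilbert--Schmidt norm product $\|a\|\|b\|$ (here order $n$ versus order $1$), while the competing $D$-contribution is controlled by $\|D\|$ times the \emph{same} order $n$. I expect the step deserving the most care is verifying this split together with the uniform bound $|D^{ik}_{ik}|\le\|D\|$ under the correct index placement $T^{jl}_{ik}=\langle e_j\ot e_l,T(e_i\ot e_k)\rangle$; everything else is routine.
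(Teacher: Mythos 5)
Your proof is correct and takes essentially the same route as the paper's: both arguments quantify Example~\ref{example:basics}~d) via the criterion of Lemma~\ref{lemma:crossability}~c), feeding in diagonal finitely supported test sequences so that the identity's contribution grows unboundedly relative to $\|a\|_{\ell^2_{\Nl^2}}\|b\|_{\ell^2_{\Nl^2}}$ while the contribution of $T-1$ stays controlled, forcing any $T$ with $\|T-1\|<1$ out of $\CrossSetb$. The only cosmetic difference is in the test data and the estimate: the paper takes $b$ to be a point mass at $(1,1)$ and bounds real parts of matrix elements entrywise by $\eps=\|T-1\|$, whereas you take $a=b$ equal to the normalized diagonal truncation and bound the diagonal matrix elements of $D=1-T$ by $\|D\|$.
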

\begin{proof}
	Let $T\in\CB(\Hil\tp{2})$ with $\|T-1\|=:\eps<1$. We have to show $T\not\in\CrossSetb$.
	
	For any orthonormal basis $\{e_n\}_{n\in \Nl}$ we have $|\ip{e_i\ot e_j}{(T-1)e_k\ot e_l}|<\eps$ for all $i,j,k,l\in \Nl$ which translates as $|T^{ij}_{kl}-\delta_{ik}\delta_{jl}|<\eps$ for all $i,j,k,l\in \Nl$. In particular,  $\delta_{ik}\delta_{jl}-\eps<\Re T^{ij}_{kl}<\delta_{ik}\delta_{jl}+\eps$. Let now $(a_{ij})_{i,j\in \Nl}\in \ell^2_{\Nl^2}$ be a any positive sequence with finitely many non-vanishing terms such that $a_{ij}=0$ if $i\neq j$, and let $b_{kl} := \delta_{1k}\delta_{1l}$. Then
	\begin{align}
		\Re\left(\sum_{i,j,k,l\in \Nl} a_{ij}T^{jl}_{ik} b_{kl}\right)&\geq \sum_{i,j,k,l\in \Nl} a_{ij}(\delta_{ij}\delta_{kl}-\eps) b_{kl}\\
		&=(1-\eps)\sum_{i\in \Nl} a_{ii}.
	\end{align}
	If $T\in\CrossSetb$, then the expression on the left is bounded above in modulus by $c\|a\|_{\ell^2_{\Nl^2}}$ (Lemma~\ref{lemma:crossability}) for some constant $c>0$. But when we vary $(a_{ii})$ within the above limitations such that $\|a\|_{\ell^2_{\Nl^2}}=1$, we can make $\sum_ia_{ii}$ as large as we like. Hence $T$ is not crossable. As $0\in\CrossSetb$, the proof is finished.
\end{proof}

\begin{corollary}
	Let $S\in\CS(\Hil)$ be bounded. Then, the space $\B_{0}(\Hil\tp{2})$ of finite rank operators is a core for $\Cross_S$ w.r.t.\ the strong and weak operator topologies.
\end{corollary}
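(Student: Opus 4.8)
The plan is to reuse the compression scheme from the proof of Proposition~\ref{prop:generalcrossing}~a). Since $S$ is bounded we have $\CrossSet_S=\CrossSetb$, and I recall that a subspace $D\subseteq\CrossSetb$ is a core for the closed map $\Cross_S$ in a given topology precisely when for every $T\in\CrossSetb$ there is a net $(T_\alpha)\subset D$ with $T_\alpha\to T$ and $\Cross_S(T_\alpha)\to\Cross_S(T)$ in that topology, i.e.\ the graph of $\Cross_S|_D$ is dense in the graph of $\Cross_S$. First I would record that $\B_0(\Hil\tp2)\subseteq\CrossSetb$, so that the statement is well posed: writing a finite-rank $R=\sum_m|\xi_m\rangle\langle\eta_m|$ and expanding $Q_S(R)$ into matrix form, the criterion of Lemma~\ref{lemma:crossability}~c) follows at once from $\|A\|_{\mathrm{op}}\le\|A\|_{\mathrm{HS}}$, so finite-rank operators are automatically $S$-crossable for bounded $S$. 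Given $T\in\CrossSetb$, I would then take the finite-rank projections $P_n$ commuting with $S$ and converging strongly to $1$ that were produced in the proof of Proposition~\ref{prop:generalcrossing}~a), and set $T_n:=P_n\tp2\,T\,P_n\tp2\in\B_0(\Hil\tp2)$. That same proof already gives $T_n\in\CrossSetb$ and $T_n\to T$ strongly, so the only remaining task is to control $\Cross_S(T_n)$.

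The heart of the argument is the covariance identity
\begin{align}
	\Cross_S\big(P\tp2\,T\,P\tp2\big)=P\tp2\,\Cross_S(T)\,P\tp2,
\end{align}
valid for every orthogonal projection $P$ commuting with $S$. To establish it I would start from the defining quadratic form and first note that $PS=SP$ forces $PS^*=S^*P$, since taking adjoints of antilinear operators reverses the order and $P^*=P$. Pushing the outer factors $P\tp2$ onto the four test vectors then yields
\begin{align}
	\ip{\varphi_2\ot S^*\psi_2}{P\tp2\,T\,P\tp2(S\varphi_1\ot\psi_1)}
	=
	\ip{P\varphi_2\ot S^*P\psi_2}{T(SP\varphi_1\ot P\psi_1)},
\end{align}
which by the very definition of $\Cross_S(T)$ equals $\ip{P\varphi_1\ot P\varphi_2}{\Cross_S(T)(P\psi_1\ot P\psi_2)}=\ip{\varphi_1\ot\varphi_2}{P\tp2\Cross_S(T)P\tp2(\psi_1\ot\psi_2)}$. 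This proves the identity and, as a byproduct, reconfirms $T_n\in\CrossSetb$ without invoking the separate estimate of Proposition~\ref{prop:generalcrossing}.

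Applying this with $P=P_n$ gives $\Cross_S(T_n)=P_n\tp2\,\Cross_S(T)\,P_n\tp2\to\Cross_S(T)$ strongly, because $P_n\tp2\to1$ strongly. Hence $(T_n,\Cross_S(T_n))\to(T,\Cross_S(T))$ in the strong operator topology; and since $\|T_n\|\le\|T\|$ and $\|\Cross_S(T_n)\|\le\|\Cross_S(T)\|$ are uniformly bounded while $\Hil$ is separable, both topologies are metrizable on the relevant bounded sets and strong convergence implies weak convergence, so the same sequence witnesses the core property for both the strong and the weak operator topologies. I expect the only genuinely non-routine step to be the covariance identity, and within it the bookkeeping that commutation with the antilinear $S$ transfers to $S^*$ and that compression by $P\tp2$ of the crossed form is represented by the compressed operator; once this is in place, the conclusion is immediate from the strong convergence already supplied by Proposition~\ref{prop:generalcrossing}.
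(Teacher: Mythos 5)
Your overall strategy --- compress $T$ by finite-rank projections, prove a covariance identity for the crossing map under such compressions, and let closedness/strong convergence do the rest --- is exactly the mechanism behind the paper's proof, and your covariance identity is verified correctly as far as it goes. The genuine gap is the input you feed into it: finite-rank orthogonal projections $P_n$ commuting with $S$ and converging strongly to $1$ need not exist. Indeed, if $P$ is a self-adjoint finite-rank projection with $PS=SP$, then (as you yourself note) $PS^*=S^*P$, hence $P$ commutes with $\Delta=S^*S$; so $\ran P$ is a finite-dimensional $\Delta$-invariant subspace and is therefore spanned by eigenvectors of $\Delta$. A sequence of such projections can converge strongly to $1$ only if $\Delta$ has a complete set of eigenvectors. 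This fails even for bounded $S$: take $\Hil=L^2(\Rl)$, $(Jf)(x)=\overline{f(-x)}$, and $\Delta$ the multiplication operator by a continuous, strictly monotone $m>0$, bounded above and below, with $m(-x)=m(x)^{-1}$; then $S=J\Delta^{1/2}$ is a bounded involution, but $\Delta$ has purely continuous spectrum, so \emph{no} finite-rank projection whatsoever commutes with $S$. (The projections invoked in the proof of Proposition~\ref{prop:generalcrossing}~a) are subject to the same constraint; what that estimate actually needs is only $\ran P_n\subset\ran E^\Delta([\tfrac1n,n])$, not commutation --- but your covariance identity genuinely needs commutation, so you cannot relax it the same way.)

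The repair is precisely the route the paper takes: compress in a basis fixed by $J$ rather than by $S$. Choose an orthonormal basis with $Je_i=e_i$ (always possible for an antiunitary involution) and let $Q_n$ be the projection onto ${\rm span}\{e_1,\dots,e_n\}$, so $[Q_n,J]=0$. Your covariance identity then applies verbatim with $J$ in place of $S$ (using $J^*=J$), giving $T_n:=Q_n\tp{2}TQ_n\tp{2}\in\B_{0}(\Hil\tp{2})$ with $\Cross_J(T_n)=Q_n\tp{2}\Cross_J(T)Q_n\tp{2}\to\Cross_J(T)$ strongly; the paper obtains the same statement from the matrix formula of Lemma~\ref{lemma:crossability}. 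One then transfers to $S$ via \eqref{eq:relationS1S2}: $\Cross_S(T_n)=(S^*J\ot1)\Cross_J(T_n)(1\ot JS^*)\to(S^*J\ot1)\Cross_J(T)(1\ot JS^*)=\Cross_S(T)$ in SOT/WOT. With this substitution your argument is complete; your closing appeal to separability and metrizability is unnecessary, since strong convergence of a sequence always implies weak convergence.
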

\begin{proof}
	Let $S=J\Delta^{1/2}$ be the polar decomposition of $S$. Let also $T\in \CrossSetb$ and let $\{e_i\}_{i\in \Nl}$ be an orthonormal basis of $\Hil$ with $Je_i=e_i$. Define the finite rank operator $T_n\in \B_0(\Hil\tp{2})$, $n\in\Nl$, by its coefficients in the basis $\{e_i\ot e_j\}_{i,j\in \Nl}$, namely $(T_n)^{ij}_{kl}=:T^{ij}_{kl}$ if $i,j,k,l\leq n$ and $(T_n)^{ij}_{kl}:=0$ otherwise. Moreover, $T_n\in \CrossSetb$ because it is finite rank.
	
	Since $(T-T_n)^{ij}_{kl}=0$ for $i,j,k,l\leq n$, we have $T_n\to T$ either in SOT or WOT. Similarly, since $\Cross_J(T-T_n)^{ij}_{kl}=(T-T_n)^{jl}_{ik}=0$ for $i,j,k,l\leq n$, we also have ${\Cross_J(T_n)\to \Cross_J(T)}$ in SOT or WOT. Using now equation \eqref{eq:relationS1S2}, in the same topologies, $\Cross_S(T_n)=(S^*J \ot1)\Cross_{J}(T_n)(1\ot JS^*)\to (S^*J \ot1)\Cross_J(T)(1\ot JS^*)$. This proves the claimed core property.
\end{proof}

The arguments in this proof show in particular that $\CrossSetb$ is dense in $\B(\Hil\tp{2})$ in the strong operator topology. We already know, in the case $\dim\Hil=\infty$, that $\Cross_S$ is not SOT- or WOT-continuous. Nonetheless, there exist interesting subspaces of the domain $\CrossSetb$ on which the crossing map acts isometrically w.r.t.\ suitable norms, as we show next.

Firstly, we consider the completed projective Banach space tensor product $\ot_\pi$ and the subspace
\begin{align}
	\CD_\pi
	&:=
	\{XF\,:\, X\in\B(\Hil)\ot_\pi\B(\Hil)\} =\B(\Hil)^{\ot_\pi2}F,
\end{align}
of $\B(\Hil\tp{2})$. We write $\|\cdot\|_\pi$ for the projective tensor norm on $\B(\Hil)^{\ot_\pi2}$. Secondly, we consider the Hilbert--Schmidt ideal $\CI_2:=\CI_2(\Hil\tp{2})\subset\B(\Hil\tp{2})$ and write $\|\cdot\|_2$ for its Hilbert--Schmidt norm.

\begin{proposition}\label{prop:Cross-isometric}
	Let $J$ be an antiunitary involution on $\Hil$.
	$\CD_\pi\subset\CrossSet$, and $\Cross_J$ restricts to a bijective linear map $\CD_\pi\to\CD_\pi$, $XF\mapsto \hat XF$, with $\|\hat X\|_\pi=\|X\|_\pi$.
\end{proposition}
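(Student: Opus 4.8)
The plan is to realize the restriction of $\Cross_J$ to $\CD_\pi$ as a single linear map on the coefficient space $\B(\Hil)^{\ot_\pi 2}$ and to analyze that map. On elementary tensors I would define
\[
\tau(A\ot B):=JB^*J\ot A,\qquad A,B\in\B(\Hil),
\]
and extend bilinearly to the algebraic tensor product $\B(\Hil)\odot\B(\Hil)$. A first point to record is that $\tau$ is genuinely \emph{complex} linear: conjugation $C\mapsto JCJ$ by the antiunitary $J$ is antilinear, and so is $B\mapsto B^*$, so their composition $B\mapsto JB^*J$ is complex linear (and lands in $\B(\Hil)$ since $J$ is bounded). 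By Example~\ref{example:basics}\,b) — which applies to \emph{all} $A,B\in\B(\Hil)$ because $S^*B^*S^*=JB^*J$ is automatically bounded when $S=J$ — every $(A\ot B)F$ is $J$-crossable with $\Cross_J((A\ot B)F)=(JB^*J\ot A)F=\tau(A\ot B)F$. Since $\CrossSet_J$ is a linear subspace and $\Cross_J$ is linear (Proposition~\ref{prop:generalcrossing}\,a)), it follows that $XF\in\CrossSet_J$ and $\Cross_J(XF)=\tau(X)F$ for every $X$ in the algebraic tensor product.

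Next I would show that $\tau$ is a $\|\cdot\|_\pi$-isometry. Because $J$ is antiunitary, $\|JB^*J\|=\|B\|$, so $\tau$ sends an elementary tensor $A\ot B$ to one with the same product norm $\|A\|\,\|B\|$; summing over any representation $X=\sum_k A_k\ot B_k$ and taking the infimum yields $\|\tau(X)\|_\pi\le\|X\|_\pi$. The same bound holds for the inverse, which on elementary tensors reads $\tau^{-1}(C\ot D)=D\ot JC^*J$ (one checks $\tau^{-1}\tau=\id$ using $(JCJ)^*=JC^*J$ and $J^2=1$). Hence $\tau$ and $\tau^{-1}$ are both $\|\cdot\|_\pi$-contractions on $\B(\Hil)\odot\B(\Hil)$, so $\tau$ extends to an isometric linear bijection of the completion $\B(\Hil)^{\ot_\pi 2}$; in particular $\|\hat X\|_\pi=\|X\|_\pi$ with $\hat X:=\tau(X)$.

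It remains to pass the identity $\Cross_J(XF)=\tau(X)F$ from the algebraic tensor product to all of $\B(\Hil)^{\ot_\pi 2}$, and here I would invoke closedness rather than continuity. The canonical map $\B(\Hil)^{\ot_\pi 2}\to\B(\Hil\tp2)$ is a contraction, since $\|\sum_k A_k\ot B_k\|_{\B(\Hil\tp2)}\le\sum_k\|A_k\|\,\|B_k\|$, and right multiplication by the unitary $F$ is isometric; thus $\|\cdot\|_\pi$-convergence implies operator-norm convergence of the associated operators $XF$. Given $X\in\B(\Hil)^{\ot_\pi 2}$, I would choose finite partial sums $X_n\to X$ in $\|\cdot\|_\pi$; then $X_nF\to XF$ in norm, and by the $\pi$-isometry of $\tau$ also $\tau(X_n)\to\tau(X)$ in $\|\cdot\|_\pi$, whence $\Cross_J(X_nF)=\tau(X_n)F\to\tau(X)F$ in norm. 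Since $\Cross_J$ is norm-closed (Proposition~\ref{prop:generalcrossing}\,b)), this forces $XF\in\CrossSet_J$ and $\Cross_J(XF)=\tau(X)F=\hat XF$, giving $\CD_\pi\subset\CrossSet_J$. Bijectivity of $\Cross_J|_{\CD_\pi}$ then follows from that of $\tau$, its inverse being $YF\mapsto\tau^{-1}(Y)F$, while injectivity is already guaranteed by Proposition~\ref{prop:generalcrossing}\,a); the norm identity $\|\hat X\|_\pi=\|X\|_\pi$ was established above.

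The main obstacle is precisely this last transition: Example~\ref{example:basics}\,b) only handles finite sums, and since $\Cross_J$ fails to be norm continuous in infinite dimensions (Proposition~\ref{prop:generalcrossing}\,c)), one cannot simply take limits by continuity. The argument instead rests on two matching estimates — contractivity of the natural map $\B(\Hil)^{\ot_\pi 2}\to\B(\Hil\tp2)$ and the $\pi$-isometry of $\tau$ — which together produce norm-convergent sequences on \emph{both} sides of the crossing relation, so that the closedness of the graph of $\Cross_J$ can glue them into the desired identity. A secondary point worth stating explicitly is that $\tau$ is complex linear rather than conjugate linear, which is what makes $\hat X$ well defined on the complex tensor product; this is exactly the cancellation of the two antilinear operations inside $B\mapsto JB^*J$.
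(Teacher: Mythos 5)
Your proof is correct, and it reaches the same central object as the paper (the map $\tau:A\ot B\mapsto JB^*J\ot A$, its complex linearity, and its $\pi$-isometry, all argued essentially as the paper does), but the way you pass from finite sums to general elements of $\B(\Hil)^{\ot_\pi 2}$ is genuinely different. The paper never invokes closedness: it takes an absolutely convergent representation $X=\sum_{n=1}^\infty A_n\ot B_n$ and verifies the crossing identity directly, computing $\langle\varphi_2\ot J\psi_2, XF(J\varphi_1\ot\psi_1)\rangle$ term by term (the interchange of sum and inner product being justified by norm convergence of $\sum_n A_n\psi_1\ot B_nJ\varphi_1$) and recognizing the result as $\langle\varphi_1\ot\varphi_2,\hat XF(\psi_1\ot\psi_2)\rangle$; this establishes $XF\in\CrossSet$ and $\Cross_J(XF)=\hat XF$ in one stroke. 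You instead treat only finite sums via Example~\ref{example:basics}\,b) and linearity, and then upgrade to the completion by combining two contractivity estimates with the closedness of $\Cross_J$ from Proposition~\ref{prop:generalcrossing}\,b) --- a correct and somewhat more modular argument, which has the virtue of reusing a general structural result instead of re-verifying a convergence interchange, and which correctly identifies that continuity is unavailable so closedness is the right substitute. A further difference worth noting: for bijectivity the paper appeals (by a forward reference) to Proposition~\ref{prop:bij}, whereas your argument is self-contained, exhibiting the inverse explicitly as $YF\mapsto\tau^{-1}(Y)F$ with $\tau^{-1}(C\ot D)=D\ot JC^*J$ and getting injectivity from Proposition~\ref{prop:generalcrossing}\,a); this is arguably cleaner in the logical ordering of the section.
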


\begin{proof}
	Recall that any element $X\in\B(\Hil)\ot_\pi\B(\Hil)$ can be represented in the form $X=\sum_{n=1}^\infty A_n\ot B_n$ with $A_n,B_n\in\B(\Hil)$ and $\sum_{n=1}^\infty\|A_n\|\|B_n\|<\infty$, and $\|X\|_\pi$ is defined as the infimum of $\sum_{n=1}^\infty\|A_n\|\|B_n\|$ over all such representations. As the $J$-transposition $B\mapsto JB^*J$ is isometric on $\B(\Hil)$, also $\hat X:=\sum_{n=1}^\infty JB_n^*J\ot A_n$ lies in $\B(\Hil)\ot_\pi\B(\Hil)$, with $\|\hat X\|_\pi=\|X\|_\pi$.
	
	Similar to Example~\ref{example:basics}~b), we find for $\psi_1,\psi_2, \varphi_1,\varphi_2\in \Hil$
	\begin{align*}
		\langle\varphi_2\ot J\psi_2,XF(J\varphi_1\ot\psi_1)\rangle
		&=
		\sum_{n=1}^\infty \langle\varphi_2,A_n\psi_1\rangle\langle J\psi_2,B_nJ\varphi_1\rangle
		\\
		&=
		\sum_{n=1}^\infty \langle\varphi_1\ot\varphi_2,(JB_n^*J\ot A_n)F(\psi_1\ot\psi_2)\rangle
		\\
		&=
		\langle\varphi_1\ot\varphi_2,\hat XF(\psi_1\ot\psi_2)\rangle.
	\end{align*}
	This shows $XF\in\CrossSet$ and $\Cross(XF)=\hat XF$, hence $\Cross(\CD_\pi)\subset\CD_\pi$. It is clear from Proposition~\ref{prop:bij} that the restriction is bijective.
\end{proof}

We now study invariance properties of the domain $\CrossSetb$ that are particular to bounded $S$ and can be expressed in terms of a Banach space analogue of standard subspaces. To that end, we fix a bounded $S\in\CS(\Hil)$ with polar decomposition $S=J\Delta^{1/2}$ and introduce the (anti-)linear maps
\begin{align}
	\Deltatinout_\alpha,\Deltat_\alpha,\JF,\St&:\CB(\Hil\tp{2})\to\CB(\Hil\tp{2}),\\
	\Deltatin_\alpha(T)
	&:=
	(1\ot\Delta^\alpha)T(\Delta^{-\alpha}\ot1),
	\\
	\Deltatout_\alpha(T)
	&:=
	(\Delta^\alpha\ot1)T(1\ot\Delta^{-\alpha}),
	\\
	\Deltat_\alpha
	&:=
	\Deltatin_\alpha\Deltatout_\alpha,\\
	\JF(T)
	&:=
	F(J\ot J)T(J\ot J)F,
	\\
	\St(T) &:= T^*.
\end{align}
All these maps are bounded in operator norm. In the case of $\Deltatinout_\alpha$, this is a consequence of the symmetry of the spectrum of $\Delta$ which follows from the modular relation $J\Delta=\Delta^{-1}J$ (i.e.\ $\Delta$ is bounded if and only if $\Delta^{-1}$ is bounded).

Moreover, it is clear that $\JF,\Deltatinout_\alpha,\Deltat_\alpha,\St$ are bijections of $\CB(\Hil\tp{2})$, with inverses $\JF^{-1}=\JF$, $(\Deltatinout_\alpha)^{-1}=\Deltatinout_{-\alpha}$, $\Deltat_\alpha^{-1}=\Deltat_{-\alpha}$, and $\St^{-1}=\St$.

\begin{proposition}\label{prop:bij}
	Let $S\in\CS(\Hil)$ be bounded.
	\begin{enumerate}
		\item The crossing map $\Cross_S:\CrossSetb\to\CrossSetb$ is a bijection of its domain. Also the maps $\St,\JF$, and $\Deltatinout_\alpha$, $\Deltat_\alpha$, $\alpha\in\Rl$, restrict to bijections of $\CrossSetb$.
		
		\item As maps $\CrossSetb\to\CrossSetb$, one has
		\begin{align}
			\Cross_S^2
			&=
			\JF\Deltat_{-1/2}\St,
			\\
			\Cross_S\Deltatinout_{\alpha}
			&=
			\Deltatoutin_{\alpha}\Cross_S,
			\\
			\Cross_S\JF
			&=
			\Deltat_{1/2}\St\Cross_S,
			\\
			\Cross_S\St
			&=
			\JF\Deltat_{-1/2}\Cross_S.
		\end{align}
		
		\item The fourth power of the crossing map is
		\begin{align}
			\Cross_S^4(T)=(\Delta\ot\Delta)T(\Delta^{-1}\ot\Delta^{-1}),\qquad T\in\CrossSetb.
		\end{align}
		In particular, the crossing map is of order four in case $S$ is antiunitary.
		
		\item The inverse crossing map is given by
		\begin{align}
			\Cross_S^{-1}(T)
			&=
			\Cross_S(T^*)^*,
		\end{align}
		and is uniquely fixed by the inverse crossing relation (for all $\varphi_1,\varphi_2,\psi_1,\psi_2$)
		\begin{align}
			\langle\varphi_1\ot\psi_1,\Cross_S^{-1}(T)(\varphi_2\ot\psi_2)\rangle
			=
			\langle S\varphi_2\ot\varphi_1,T(\psi_2\ot S^*\psi_1)\rangle
			.
		\end{align}
	\end{enumerate}
\end{proposition}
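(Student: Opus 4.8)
The computations all run off the defining crossing relation
\[
\ip{\varphi_1\ot\varphi_2}{\Cross_S(T)(\psi_1\ot\psi_2)}=\ip{\varphi_2\ot S^*\psi_2}{T(S\varphi_1\ot\psi_1)}
\]
together with the modular relations $S=J\Delta^{1/2}$, $S^*=\Delta^{1/2}J=J\Delta^{-1/2}$ and $J\Delta^\alpha=\Delta^{-\alpha}J$, which yield the commutations $S\Delta^\alpha=\Delta^{-\alpha}S$ and $\Delta^\alpha S^*=S^*\Delta^{-\alpha}$ that I would use throughout. Since $S$ is bounded, $\Delta$ and $\Delta^{-1}$ are bounded, so $\Deltatinout_\alpha,\Deltat_\alpha$ are genuine bijections of $\CB(\Hil\tp2)$; the real content of a) is that they, as well as $\St$ and $\JF$, preserve the \emph{domain} $\CrossSetb$. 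For $\St$ this is the last assertion of Lemma~\ref{lemma:crossability}; for $\JF$ it follows from the $\ell^2$-criterion \eqref{eq:Cdom1}, since in a $J$-fixed basis $\JF$ only permutes and conjugates the entries $T^{ij}_{kl}$ and hence preserves the bound. For $\Deltatin_\alpha$ I would compute the crossing form of $\Deltatin_\alpha(T)$ directly: using $\Delta^\alpha S^*=S^*\Delta^{-\alpha}$ and $\Delta^{-\alpha}S=S\Delta^\alpha$ it equals the form of the \emph{bounded} operator $\Deltatout_\alpha\Cross_S(T)$, which at once shows $\Deltatin_\alpha(T)\in\CrossSetb$ and establishes the intertwiner $\Cross_S\Deltatin_\alpha=\Deltatout_\alpha\Cross_S$; the symmetric computation gives $\Cross_S\Deltatout_\alpha=\Deltatin_\alpha\Cross_S$, i.e.\ $\Cross_S\Deltatinout_\alpha=\Deltatoutin_\alpha\Cross_S$, and $\Deltat_\alpha=\Deltatin_\alpha\Deltatout_\alpha$ is then handled by composition.

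The heart of b) is the identity $\Cross_S^2=\JF\Deltat_{-1/2}\St$, which I would obtain together with the fact that $\Cross_S$ maps $\CrossSetb$ into itself. Applying the crossing relation twice yields
\[
\ip{\varphi_1\ot\varphi_2}{\Cross_S^2(T)(\psi_1\ot\psi_2)}=\ip{S^*\psi_2\ot S^*\psi_1}{T(S\varphi_2\ot S\varphi_1)},
\]
and a short manipulation using the antilinear adjoint relation $\ip{\xi}{S^*\eta}=\ip{\eta}{S\xi}$ identifies the right-hand side with the form of $F(S^*\ot S^*)T^*(S^*\ot S^*)F$. As the latter is a bounded operator, $\Cross_S(T)$ is crossable and $\Cross_S^2(T)=F(S^*\ot S^*)T^*(S^*\ot S^*)F$; inserting $S^*=\Delta^{1/2}J=J\Delta^{-1/2}$ rewrites this exactly as $\JF\Deltat_{-1/2}\St(T)$. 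The remaining two intertwiners are cleanest via reduction to the antiunitary case: taking $S_2=J$ in \eqref{eq:relationS1S2} (equivalently Proposition~\ref{prop:CrossableAndBounded}\,c)) gives $\Cross_S=\Deltatout_{1/2}\Cross_J$ on $\CrossSetb$, and for $\Cross_J$ the matrix-element formula $\Cross_J(T)^{ij}_{kl}=T^{jl}_{ik}$ of Lemma~\ref{lemma:crossability} makes $\Cross_J^2=\JF\St$, $\Cross_J\JF=\St\Cross_J$ and $\Cross_J\St=\JF\Cross_J$ immediate. Conjugating these by $\Deltatout_{1/2}$ and using the elementary relations $\St\Deltat_\alpha=\Deltat_{-\alpha}\St$, $\JF\Deltat_\alpha=\Deltat_{-\alpha}\JF$, $\St\JF=\JF\St$ then produces $\Cross_S\JF=\Deltat_{1/2}\St\Cross_S$ and $\Cross_S\St=\JF\Deltat_{-1/2}\Cross_S$.

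Parts c) and d) are then bookkeeping with $\St,\JF,\Deltat_\alpha$. Squaring $\Cross_S^2=\JF\Deltat_{-1/2}\St$ and commuting the factors past each other with $\St\JF=\JF\St$, $\St\Deltat_{-1/2}=\Deltat_{1/2}\St$ and $\Deltat_{-1/2}\JF=\JF\Deltat_{1/2}$ telescopes $(\JF\Deltat_{-1/2}\St)^2$ to $\Deltat_1$, giving $\Cross_S^4(T)=(\Delta\ot\Delta)T(\Delta^{-1}\ot\Delta^{-1})$ and, for $\Delta=1$, $\Cross_S^4=\id$. For d) I would exhibit $\St\Cross_S\St$ as a two-sided inverse: in the antiunitary case $\Cross_J^{-1}=\Cross_J^3=\Cross_J\Cross_J^2=\Cross_J\JF\St=\St\Cross_J\St$ (using $\Cross_J^4=\id$ and $\Cross_J\JF=\St\Cross_J$), and conjugating by $\Deltatout_{1/2}$ transports this to $\Cross_S^{-1}=\St\Cross_S\St$, i.e.\ $\Cross_S^{-1}(T)=\Cross_S(T^*)^*$. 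Feeding this expression back into the crossing relation for $\Cross_S(T^*)$ and conjugating yields the stated inverse crossing relation, with uniqueness coming from the dense range of $S$ and $S^*$ as in Proposition~\ref{prop:generalcrossing}\,a). Finally, since $\St$ and $\Cross_S$ both preserve $\CrossSetb$, the explicit inverse $\St\Cross_S\St$ proves the bijectivity of $\Cross_S$ asserted in a).

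The main obstacle is not the algebra but the domain bookkeeping: in infinite dimensions $\CrossSetb\subsetneq\CB(\Hil\tp2)$ and $\Cross_S$ is only densely defined, so every composite identity above is a priori meaningless until one knows the intermediate operators are again crossable. The device that resolves this throughout is to compute the relevant crossing form explicitly and exhibit a manifestly bounded operator representing it; boundedness of $S$ (hence of $\Delta^{\pm1}$) is precisely what guarantees these representatives are bounded, and it is essential that crossability be established before the algebraic relations are invoked, on pain of circularity.
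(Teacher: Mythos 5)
Your proof is correct, and the crucial point --- establishing crossability of every intermediate operator before composing maps --- is handled properly throughout. Your route, however, differs from the paper's in a substantive way. The paper proves everything uniformly by computing the crossing forms $Q_S(\Cross_S(T))$, $Q_S(\Deltatin_\alpha(T))$, $Q_S(\JF(T))$ and $Q_S(T^*)$, reading off in one stroke both that these operators lie in $\CrossSetb$ and all four exchange relations of part b); bijectivity of $\Cross_S$ then follows from injectivity (Proposition \ref{prop:generalcrossing} a)) together with surjectivity via $\Cross_S^2=\JF\Deltat_{-1/2}\St$, and part d) is the purely algebraic computation $\St\Cross_S\St\Cross_S=1$. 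You derive only the $\Cross_S^2$ identity by a direct form computation (identically to the paper) and route the rest through the antiunitary reduction $\Cross_S=\Deltatout_{1/2}\Cross_J$ from \eqref{eq:relationS1S2}, using the matrix formula $\Cross_J(T)^{ij}_{kl}=T^{jl}_{ik}$ of Lemma \ref{lemma:crossability} to make $\Cross_J^2=\JF\St$, $\Cross_J\JF=\St\Cross_J$, $\Cross_J\St=\JF\Cross_J$ and $\Cross_J^{-1}=\Cross_J^3=\St\Cross_J\St$ transparent; likewise you obtain $\St$- and $\JF$-stability of $\CrossSetb$ from Lemma \ref{lemma:crossability} and the $\ell^2$-criterion \eqref{eq:Cdom1} rather than from form computations, and bijectivity from the explicit two-sided inverse. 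Both routes are sound: the paper's is self-contained and its form computations serve as a template that also makes sense for unbounded $S$, while yours exploits boundedness more aggressively and makes the order-four, Fourier-like structure visible already at the level of matrix elements. One point of precision: in your conjugation steps the commutation relations actually needed are the in/out-swapped ones, e.g.\ $\St\Deltatout_\alpha=\Deltatin_{-\alpha}\St$ and $\Deltatout_\alpha\JF=\JF\Deltatin_{-\alpha}$ (equivalently, the intertwining relation $\Cross_J\Deltatin_\alpha=\Deltatout_\alpha\Cross_J$, valid for the modular operator $\Delta$ of $S$ because $J\Delta^\alpha=\Delta^{-\alpha}J$), not the relations for the combined $\Deltat_\alpha$ that you list; these are equally elementary to verify, so this is bookkeeping rather than a gap, but it should be stated correctly.
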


\begin{proof}
	We begin with some calculations based on an arbitrary $T\in\CrossSetb$ and arbitrary vectors $\psi_1,\psi_2,\varphi_1,\varphi_2\in\Hil$. By repeatedly using the definition of the crossing map and the modular exchange relation $J\Delta^{1/2}=\Delta^{-1/2}J$, we find
	\begin{align*}
		Q_S(\Cross_S(T))(\varphi_1\ot\varphi_2,\psi_1\ot\psi_2)
		&=
		\ip{\varphi_2\ot S^*\psi_2}{\Cross_S(T)(S\varphi_1\ot \psi_1)}
		\\
		&=
		\ip{S^*\psi_2\ot S^*\psi_1 }{T (S\varphi_2 \ot S \varphi_1)}
		\\
		&=\ip{\varphi_1\ot\varphi_2}{F(S^*)^{\ot 2}T^*(S^*)^{\ot 2}F (\psi_1\ot \psi_2)},
		\\
		Q_S(\Deltatin_\alpha(T))(\varphi_1\ot\varphi_2,\psi_1\ot\psi_2)
		&=
		\langle \varphi_2\ot S^*\Delta^{-\alpha}\psi_2,T(S\Delta^\alpha\varphi_1\ot \psi_1)\rangle
		\\
		&=
		\langle\varphi_1\ot\varphi_2,\Deltatout_\alpha(\Cross_S(T))(\psi_1\ot\psi_2)\rangle,
	\end{align*}
	\begin{align*}
		Q_S(\JF(T))(\varphi_1\ot\varphi_2,\psi_1\ot\psi_2)
		&=
		\langle \Delta^{-1/2}\psi_2\ot J\varphi_2,T(J\psi_1\ot\Delta^{1/2}\varphi_1)\rangle^*
		\\
		&=
		\langle SJ\psi_1\ot\Delta^{-1/2}\psi_2,\Cross_S(T)(\Delta^{1/2}\varphi_1\ot S^*J\varphi_2)\rangle^*
		\\
		&=
		\langle\varphi_1\ot\varphi_2,\Deltah_{1/2}(\Cross_S(T)^*)(\psi_1\ot\psi_2)\rangle,
		\\
		Q_S(T^*)(\varphi_1\ot\varphi_2,\psi_1\ot\psi_2)
		&=
		\langle S\varphi_1\ot\psi_1,T(\varphi_2\ot S^*\psi_2)\rangle^*
		\\
		&=
		\langle\varphi_1\ot\varphi_2,F(S^*\ot S^*)\Cross_S(T)(S^*\ot S^*)F(\psi_1\ot\psi_2)\rangle.
	\end{align*}
	As all occurring operators are bounded, this shows that $\Cross_S,\JF,\Deltatinout_{\alpha}$, $\Deltat_\alpha$ and $\St$ map $\CrossSetb$ into $\CrossSetb$. (The ``out'' version of the second equation follows by an analogous calculation which we omit.) Taking also into account $F(S^*)^{\ot 2}T(S^*)^{\ot 2}F=\JF\Deltat_{-1/2}$, we can read off the four equations listed  in claim b), for now to be understood as equations between maps $\CrossSetb\to\CrossSetb \subset\CB(\Hil\tp{2})$.
	
	Since $\JF$ and $\St$ are involutions and $(\Deltatinout_{\alpha})^{-1}=\Deltatinout_{-\alpha}$, it follows that $\JF$, $\St$, and $\Deltatinout_\alpha$, $\Deltat_\alpha$ restrict to bijections of $\CrossSetb$. The first exchange relation, $\Cross_S^2=\JF\Deltat_{-1/2}\St$, then shows that $\Cross_S:\CrossSetb\to\CrossSetb$ is surjective and hence bijective, as injectivity was proven in Proposition \ref{prop:generalcrossing} $a)$. This proves $a$) and~$b)$.
	
	For $c)$, we first note the three exchange relations $\St\JF=\JF\St$, $\St\Deltat_{\alpha}=\Deltat_{-\alpha}\St$, $\JF\Deltat_{\alpha}=\Deltat_{-\alpha}\JF$, all of which are verified immediately. We then calculate with the help of the first equation from $b)$
	\begin{align*}
		\Cross_S^4
		&=
		\JF\Deltat_{-1/2}\St\JF\Deltat_{-1/2}\St
		=
		\JF\Deltat_{-1/2}\JF\Deltat_{1/2}
		=
		\Deltat_1,
	\end{align*}
	which is the claimed equation for the fourth power of the crossing map.
	
	For $d)$, we calculate with the equations from $b)$
	\begin{align}
		\St\Cross_S\St\Cross_S
		&=
		\St\JF\Deltat_{-1/2}\Cross_S^2
		=
		\St\JF\Deltat_{-1/2}\JF\Deltat_{-1/2}\St
		=1.
	\end{align}
	This implies $\Cross_S^{-1}=\St\Cross_S\St$, as claimed. The other characterization of the inverse is easily verified by direct calculation.
\end{proof}

As a corollary to this result, we can now describe crossing symmetry in terms of a Banach space analogue of standard subspaces. To make the connection to the Hilbert space situation most apparent, we define
\begin{align}
	\CX
	:=
	\overline{\CrossSetb}^{\|\cdot\|}
\end{align}
as the operator norm closed subspace of $\CB(\Hil\tp{2})$ generated by the crossable operators. Note $\CX\neq\CB(\Hil\tp{2})$ unless $\dim\Hil < \infty$ (Corollary \ref{cor:notdense}). We define
\begin{align*}
	\Sh,\Jh&:\CrossSetb\to\CrossSetb
	\\
	\Sh
	&:=\St\Cross_S : T\mapsto \Cross_S(T)^*,
	\\
	\Jh
	&:=
	\St\Cross_J:T\mapsto \Cross_J(T)^*,
	\\
	\Deltah_{\alpha}
	&:=
	\Deltatin_{\alpha}.
\end{align*}

\begin{theorem}\label{thm:Hhat}
	Let $S\in\CS(\Hil)$ be bounded. Then the maps $\Sh$ and $\Jh$ are bounded antilinear involutions on $\CrossSetb$ satisfying
	\begin{align}
		\Sh=\Jh\Deltah_{1/2}=\Deltah_{-1/2}\Jh.
	\end{align}
	An operator $T\in\CrossSetb$ is $S$-crossing symmetric if and only if $T\in\Hh$, with
	\begin{align}
		\Hh
		:=\ker(\Sh-1).
	\end{align}
	The space $\Hh$ is a closed real subspace of $\CX = \overline{\CrossSetb}^{\|\cdot\|}$ which is also cyclic and separating, i.e.\ $\Hh+i\Hh\subset\CX$ is dense in operator norm and $\Hh\cap i\Hh=\{0\}$.
\end{theorem}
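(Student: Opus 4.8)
The plan is to assemble the statement from the structural results of Proposition~\ref{prop:bij} and Proposition~\ref{prop:generalcrossing}, together with the two-involution comparison \eqref{eq:relationS1S2}. First I would record that $\Sh=\St\Cross_S$ and $\Jh=\St\Cross_J$ are antilinear, being composites of the antilinear map $\St$ with the complex-linear crossing maps, and that they are involutions: the identity $\St\Cross_S\St\Cross_S=1$ established in the proof of Proposition~\ref{prop:bij}~d) gives $\Sh^2=1$, and the same identity applied to the (bounded, antiunitary) involution $J$ gives $\Jh^2=1$. Here ``bounded'' refers to the modular part of $\Sh$: since $S$ is bounded, the symmetry of the spectrum of $\Delta$ makes both $\Delta^{\pm1/2}$ bounded, so $\Deltah_{\pm1/2}=\Deltatin_{\pm1/2}$ are bounded maps on $\CrossSetb$, in exact parallel with the notion of a bounded $S\in\CS(\Hil)$.

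For the factorization I would compute $S^*J=\Delta^{1/2}$ and $JS^*=\Delta^{-1/2}$ from $S^*=J\Delta^{-1/2}$ and $J\Delta J=\Delta^{-1}$, so that \eqref{eq:relationS1S2} reads $\Cross_S(T)=(\Delta^{1/2}\ot1)\Cross_J(T)(1\ot\Delta^{-1/2})$; taking adjoints yields $\Sh=\Deltah_{-1/2}\Jh$ directly. The second equality $\Sh=\Jh\Deltah_{1/2}$ I would obtain from the commutation $\Sh\Deltah_\alpha=\Deltah_{-\alpha}\Sh$, which follows by combining the exchange relation $\Cross_S\Deltatin_\alpha=\Deltatout_\alpha\Cross_S$ of Proposition~\ref{prop:bij}~b) with the elementary identity $\St\Deltatout_\alpha=\Deltatin_{-\alpha}\St$. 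Writing $\Jh=\Deltah_{1/2}\Sh$ from the first factorization and using $\Sh\Deltah_{1/2}=\Deltah_{-1/2}\Sh$ then gives $\Jh\Deltah_{1/2}=\Deltah_{1/2}\Sh\Deltah_{1/2}=\Sh$, as required.

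The characterization of crossing symmetry is immediate: $T\in\ker(\Sh-1)$ means $\Cross_S(T)^*=T$, i.e.\ $\Cross_S(T)=T^*$, which is exactly $S$-crossing symmetry. That $\Hh=\ker(\Sh-1)$ is a real subspace and satisfies $\Hh\cap i\Hh=\{0\}$ are formal consequences of antilinearity and involutivity: antilinearity makes $\ker(\Sh-1)$ real-linear, and if $T\in\Hh\cap i\Hh$, writing $T=iH$ with $H\in\Hh$, one gets $T=\Sh T=\Sh(iH)=-i\,\Sh H=-iH=-T$, forcing $T=0$. Cyclicity is equally direct once one notes $\ker(\Sh+1)=i\Hh$: the decomposition $T=\tfrac12(T+\Sh T)+\tfrac12(T-\Sh T)$ exhibits every $T\in\CrossSetb$ as a sum of an element of $\Hh$ and an element of $i\Hh$, so in fact $\Hh+i\Hh=\CrossSetb$, which is dense in $\CX=\overline{\CrossSetb}^{\|\cdot\|}$ by construction.

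The one point requiring genuine care — and what I expect to be the main obstacle — is the closedness of $\Hh$ in $\CX$. One cannot argue from continuity of $\Sh$, since by Proposition~\ref{prop:generalcrossing}~c) the crossing map (hence $\Sh=\St\Cross_S$) fails to be norm-continuous whenever $\dim\Hil=\infty$. Instead I would use that $\Sh$ has closed graph: $\St$ is an isometric bijection and $\Cross_S$ is closed by Proposition~\ref{prop:generalcrossing}~b), so $\Sh$ is closed. Then for $T_n\in\Hh$ with $T_n\to T$ in operator norm, both $T_n\to T$ and $\Sh T_n=T_n\to T$, and closedness yields $T\in\CrossSetb$ with $\Sh T=T$, i.e.\ $T\in\Hh$. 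This shows $\Hh$ is norm-closed, completing the verification that $\Hh$ is a bona fide standard-subspace analogue inside $\CX$.
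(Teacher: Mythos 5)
Your proposal is correct, and for the algebraic part it coincides with the paper's own proof: the involution property of $\Sh$ and $\Jh$ via $\St\Cross_S\St\Cross_S=1$ (Proposition~\ref{prop:bij}~d)), the factorization $\Sh=\Jh\Deltah_{1/2}=\Deltah_{-1/2}\Jh$ obtained from \eqref{eq:relationS1S2} together with the exchange relations of Proposition~\ref{prop:bij}~b), the identification of crossing symmetry with $\ker(\Sh-1)$, and the separating property are all the same computations; your decomposition $T=\tfrac12(T+\Sh T)+\tfrac12(T-\Sh T)$, which gives $\Hh+i\Hh=\CrossSetb$ exactly, is a spelled-out version of the paper's remark that cyclicity is immediate. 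Where you genuinely diverge is the norm-closedness of $\Hh$: the paper disposes of it in one line, calling $\Hh$ ``the kernel of a bounded real linear operator'', i.e.\ it leans on the boundedness asserted in the theorem statement, whereas you derive closedness from the closed-graph property of $\Cross_S$ (Proposition~\ref{prop:generalcrossing}~b)) composed with the isometric bijection $\St$. Your route is the more defensible one: as you correctly note, norm continuity of $\Cross_S$, and hence of $\Sh=\St\Cross_S$, fails whenever $\dim\Hil=\infty$ by Proposition~\ref{prop:generalcrossing}~c) (whose counterexample applies to bounded $S$, even to $S=J$), so the paper's one-liner cannot be read as boundedness in the operator norm, and the closed-graph argument is what actually secures closedness. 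The one soft spot in your write-up is the gloss that ``bounded'' in the theorem refers to $\Deltah_{\pm1/2}$; that reading does not prove the first clause as literally stated. A reading that does make it true: since $\Cross_S\St\Cross_S=\St$ by Proposition~\ref{prop:bij}~d), one has $\Cross_S(\Sh T)=T^*$, so $\Sh$ (and likewise $\Jh$) is an isometry for the graph norm $T\mapsto\|T\|+\|\Cross_S(T)\|$ on $\CrossSetb$. Under any reading, however, the operator-norm closedness of $\Hh$ requires precisely the closed-graph argument you give, so nothing in your proof needs to change.
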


\begin{proof}
	By the preceding proposition we know that $\Sh$, $\Jh$, and $\Deltah_{1/2}$ are bijections of $\CrossSetb$, with $\Sh$, $\Jh$ antilinear and $\Deltah_{1/2}$ linear. Using relation $d)$ of Proposition \ref{prop:bij}, we see that $\Sh$ is an involution,
	\begin{align}
		\Sh^2
		&:=
		\St\Cross_S\St\Cross_S=\Cross_S^{-1}\Cross_S=1.
	\end{align}
	The same reasoning applied to $J$ instead of $S$ shows that $\Jh$ is an involution as well.
	
	To show $\Sh=\Jh\Deltah_{1/2}=\Deltah_{-1/2}\Jh$, observe that \eqref{eq:relationS1S2} implies $\Cross_S = \Cross_J \Deltatin_{1/2} = \Deltatout_{1/2} \Cross_J$.
	Hence, by Proposition \ref{prop:bij} $b)$,
	\begin{align*}
		\Sh
		&=
		\St\Cross_S
		=
		\St\Deltatout_{-1/2}\Cross_S\Deltatin_{1/2}
		=
		\St\Cross_J\Deltatin_{1/2}
		=
		\Jh\Deltah_{1/2}
		\\
		&=
		\St\Deltatout_{1/2}\Cross_J
		=
		\Deltatin_{-1/2}\St\Cross_J
		=
		\Deltah_{-1/2}\Jh.
	\end{align*}
	
	By Definition \ref{def:Scrossable}, $T\in\CrossSetb$ is $S$-crossing symmetric if and only if $\Cross_S(T)=T^*$, i.e.\ if and only if $T\in\Hh$. Moreover, $\Hh$ is a norm closed real linear space as the kernel of a bounded real linear operator. The statements about $\Hh$ being cyclic and separating are immediate consequences of the definition of $\CX$ and the fact that $\Sh$ is an antilinear involution.
\end{proof}

In the present Banach space setting, we have no polar decomposition of $\Sh$ at our disposal. To demonstrate that the above result is really a close analogue of standard subspaces of Hilbert spaces, we next consider the Hilbert space $\CI_2\subset\CB(\Hil\tp{2})$ of Hilbert--Schmidt operators on $\Hil\ot\Hil$. We write $\|\cdot\|_2$ and $\langle\,\cdot\,,\,\cdot\,\rangle_2$ for the Hilbert--Schmidt norm and scalar product, respectively.

\begin{proposition}\label{prop:HS-Crossing}
	\leavevmode
	\begin{enumerate}
		\item Any Hilbert--Schmidt operator is $S$-crossable for any bounded $S\in\CS(\Hil)$, i.e.\ $\CI_2\subset\CrossSetb$. The $J$-crossing map $\Cross_J$ restricts to a unitary on $\CI_2$, and the $S$-crossing map $\Cross_S$ restricts to a bounded bijection on $\CI_2$.
		\item In restriction to $\CI_2\subset\CrossSetb$,
		\begin{align}
			\Sh|_{\CI_2}
			&=
			\Jh|_{\CI_2} {\Deltah_{1/2}}|_{\CI_2}
		\end{align}
		is the polar decomposition of $\Sh|_{\CI_2}$. Hence $\Hh\cap\CI_2$ is a standard subspace of the Hilbert space $\CI_2$, with Tomita operator
		\begin{align}
			\Sh_{\Hh\cap\CI_2} = \Sh|_{\CI_2}.
		\end{align}
	\end{enumerate}
\end{proposition}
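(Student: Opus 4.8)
The plan is to reduce everything to the antiunitary case $S=J$, where the crossing map is a concrete permutation of matrix indices, and then transport the conclusions to general bounded $S=J\Delta^{1/2}$ via the conjugation formula \eqref{eq:relationS1S2}. For part~(a) I would fix an orthonormal basis $\{e_n\}_n$ with $Je_n=e_n$ and use the matrix-element formula $\Cross_J(T)^{ij}_{kl}=T^{jl}_{ik}$ from Lemma~\ref{lemma:crossability}. The assignment $(i,j,k,l)\mapsto(j,l,i,k)$ is a bijection of $\Nl^4$, so it merely permutes the matrix entries and preserves their $\ell^2$-norm; hence for $T\in\CI_2$ the permuted matrix $(T^{jl}_{ik})$ is again square-summable and defines a Hilbert--Schmidt (in particular bounded) operator, which by Lemma~\ref{lemma:crossability} is $\Cross_J(T)$ and witnesses $T\in\CrossSet_J=\CrossSetb$ with $\|\Cross_J(T)\|_2=\|T\|_2$. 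Thus $\CI_2\subset\CrossSetb$ and $\Cross_J|_{\CI_2}$ is an isometry; it is surjective, hence unitary, because $\Cross_J$ has order four on $\CrossSetb$ (Proposition~\ref{prop:bij}~c, with $\Delta=1$), so $\Cross_J^{-1}=\Cross_J^{3}$ maps $\CI_2$ into itself. For general bounded $S$ I would invoke $\Cross_S=\Deltatout_{1/2}\Cross_J$ from \eqref{eq:relationS1S2}; as the modular relation $J\Delta J=\Delta^{-1}$ forces $\Delta^{\pm1/2}$ to be bounded, left and right multiplication by $\Delta^{\pm1/2}$ preserve $\CI_2$, so $\Cross_S|_{\CI_2}$ is a bounded bijection.

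\textbf{Part (b).} The factorization $\Sh=\Jh\Deltah_{1/2}$ is already supplied by Theorem~\ref{thm:Hhat}, so the task is to recognize it as the polar decomposition on the Hilbert space $\CI_2$. First I would check that $\Jh|_{\CI_2}=\St\,\Cross_J|_{\CI_2}$ is an antiunitary involution: the adjunction $\St:T\mapsto T^*$ is antiunitary on $\CI_2$ (by cyclicity of the trace, $\langle T^*,U^*\rangle_2=\overline{\langle T,U\rangle_2}$), $\Cross_J|_{\CI_2}$ is unitary by part~(a), and $\Jh$ is an involution by Theorem~\ref{thm:Hhat}. Next I would check that $\Deltah_{1/2}|_{\CI_2}$, namely $T\mapsto(1\ot\Delta^{1/2})T(\Delta^{-1/2}\ot1)$, is strictly positive and self-adjoint: it is the product of the commuting left multiplication by $1\ot\Delta^{1/2}$ and right multiplication by $\Delta^{-1/2}\ot1$, each positive and self-adjoint on $\CI_2$ because the multipliers are strictly positive (for instance $\langle T,(1\ot\Delta^{1/2})T\rangle_2=\Tr(T^*(1\ot\Delta^{1/2})T)\geq0$), and a product of commuting positive self-adjoint operators is positive self-adjoint.

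With these two facts in hand, $\Sh|_{\CI_2}=\Jh|_{\CI_2}\Deltah_{1/2}|_{\CI_2}$ presents $\Sh|_{\CI_2}$ as an antiunitary times a strictly positive operator, which by uniqueness of the polar decomposition \emph{is} the polar decomposition; as a consistency check one computes $\Sh^*\Sh=\Deltah_{1/2}\Jh\Jh\Deltah_{1/2}=\Deltah_1=(\Deltah_{1/2})^2$, so that $|\Sh|=\Deltah_{1/2}$. Finally, $\Sh|_{\CI_2}$ is a bounded, everywhere-defined antilinear involution of $\CI_2$ (boundedness from part~(a) and $\Sh^2=1$ from Theorem~\ref{thm:Hhat}), hence an element of $\CS(\CI_2)$; the bijection of Remark~\ref{remark:modulartheory} then realizes $\Hh\cap\CI_2=\ker(\Sh|_{\CI_2}-1)$ as a standard subspace of $\CI_2$ with Tomita operator $\Sh|_{\CI_2}$.

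The step I expect to carry the real content is the positivity of $\Deltah_{1/2}$ on $\CI_2$: the identity $\Sh=\Jh\Deltah_{1/2}$ is purely algebraic, and only by identifying $\Deltah_{1/2}$ as a strictly positive self-adjoint Hilbert-space operator---through its commuting left/right multiplication structure---does the factorization become a genuine polar decomposition and thereby produce an honest modular operator $\Deltah_1$ for the standard subspace $\Hh\cap\CI_2$. The antiunitarity of $\Jh$ and the Hilbert--Schmidt isometry of $\Cross_J$ established in part~(a) are the supporting ingredients.
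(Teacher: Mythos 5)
Your proof is correct. Part (a) is essentially the paper's argument: the paper establishes the isometry $\|\Cross_J(T)\|_2=\|T\|_2$ by summing matrix elements over bases of the form $\{e_n\ot e_m\}$, $\{Je_n\ot e_m\}$, $\{e_n\ot Je_m\}$, which is exactly your permutation-of-indices argument via Lemma~\ref{lemma:crossability}, and then passes to general bounded $S$ through $\Cross_S=\Deltatout_{1/2}\Cross_J$ and the ideal property of $\CI_2$; your surjectivity argument via $\Cross_J^{-1}=\Cross_J^{3}$ is a clean substitute for the paper's appeal to bijectivity of $\Cross_J$ on $\CrossSetb$. In part (b) your pivot is genuinely different: the paper computes the Hilbert--Schmidt adjoint $\Cross_S^*=(\Deltatout_{1/2}\Cross_J)^*=\Deltatin_{1}\Cross_S^{-1}$, using the exchange relations of Proposition~\ref{prop:bij} together with self-adjointness of $\Deltatout_{1/2}|_{\CI_2}$ and unitarity of $\Cross_J|_{\CI_2}$, deduces $\Sh^*\Sh=\Deltatin_{1}=\Deltah$, and delegates the remaining identification to ``standard facts about modular theory''; you instead verify directly that $\Jh|_{\CI_2}=\St\Cross_J|_{\CI_2}$ is an antiunitary involution and that $\Deltah_{1/2}|_{\CI_2}$ is strictly positive and self-adjoint through the commuting left/right multiplication structure, and then invoke uniqueness of the polar decomposition. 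The two routes use the same ingredients in opposite order: the paper's adjoint computation produces the modular operator $\Deltah=\Sh^*\Sh$ by recycling the algebra of Proposition~\ref{prop:bij}, but it implicitly needs the positivity of $\Deltah_{1/2}|_{\CI_2}$ in order to recognize it as $(\Sh^*\Sh)^{1/2}$ --- exactly the point you make explicit --- whereas your argument yields the polar decomposition first and recovers $\Sh^*\Sh=\Deltah_{1}$ only as a consistency check. Your version is therefore somewhat more self-contained at the modular-theoretic step, at the cost of a slightly longer verification; the two are otherwise equivalent in content.
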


\begin{proof}
	$a)$ Using $\Cross_J=\Deltatout_{-1/2}\Cross_S$ as in the previous proof, and the fact that $\Deltatout_{-1/2}$ preserves $\CI_2\subset\CrossSetb$ thanks to the ideal properties of Hilbert--Schmidt operators, we only have to show that $\Cross_J$ preserves $\CI_2$.
	
	Let $\{e_n\}_{n\in \Nl}$ be an orthonormal basis of $\Hil$. Then $\{e_n\ot e_m\}_{n,m\in \Nl}$, $\{e_n\ot J e_m\}_{n,m\in \Nl}$, and $\{J e_n\ot e_m\}_{n,m\in \Nl}$ are orthonormal bases of $\Hil\ot\Hil$. One can now simply compute the Hilbert--Schmidt norm of $\Cross_J(T)$ with respect to one of these bases, namely,
	\begin{align*}
		\|\Cross_J(T)\|_2^2
		&=
		\sum_{n,m,k,l=1}^{\infty}|\ip{e_k\ot e_l}{\Cross_J(T)(e_n\ot e_m)}|^2
		\\
		&=\sum_{n,m,k,l=1}^{\infty}|\ip{e_l\ot Je_m}{T(Je_k\ot e_n)}|^2
		=
		\|T\|_2^2.
	\end{align*}
	Hence, $\Cross_J$ is an isometry of $\CI_2$. Since $\Cross_J$ is bijective on $\CrossSetb$, also its restriction to $\CI_2$ is bijective and hence $\Cross_J$ is a unitary on $\CI_2$. The claim about ${\Cross_S}|_{\CI_2}$ follows.
	
	$b)$ We first compute the adjoint of the  $S$-crossing map. Since $(\Deltatout_{1/2})|_{\CI_2}$ is self-adjoint, and $(\Cross_J)|_{\CI_2}$ is unitary, we have (omitting the restriction symbols)
	\begin{align*}
		\Cross_S^*
		&=
		(\Deltatout_{1/2}\Cross_J)^*
		=
		\Cross_J^{-1}\Deltatout_{1/2}
		=
		\St\Cross_J\St\Deltatout_{1/2}
		\\
		&=
		\St\Deltatout_{-1}\Deltatout_{1/2}\Cross_J\St
		=
		\Deltatin_{1}\St\Cross_S\St
		=
		\Deltatin_{1}\Cross_S^{-1},
	\end{align*}
	and hence
	\begin{align*}
		\Sh^*\Sh
		&=
		(\St\Cross_S)^*\St\Cross_S
		=
		\Cross_S^*\Cross_S
		=
		\Deltatin_{1}
		=
		\Deltah,
	\end{align*}
	since $\St^*=\St$ as a consequence of $\Tr$ being hermitian. The other claims follow from standard facts about modular theory of standard subspaces.
\end{proof}
We have thus constructed a map
\begin{align}
	H_S \longmapsto \Hh_S\cap\CI_2 = H_{\Sh|_{\CI_2}}
\end{align}
from standard subspaces of $\Hil$ (for bounded $S\in\CS(\Hil)$) to standard subspaces of the Hilbert--Schmidt operators $\CI_2$, which allows us to translate questions on standard subspaces into questions on crossing symmetry, and vice versa.

\begin{remark}
	We conclude this section by pointing out that the crossing map $\Cross_J$ for antiunitary $J$ has various formal similarities to the classical Fourier transform $\CF$, in particular on the Hilbert--Schmidt operators~$\CI_2$: We have just seen that the restriction of $\Cross_J$ to $\CI_2$ is unitary (w.r.t.\ the Hilbert--Schmidt scalar product), of order four (Prop.~\ref{prop:bij}~c)), and the inversion formula $\Cross_S^{-1}(T)^* = \Cross_S(T^*)$ from Prop.~\ref{prop:bij}~d) (which even holds for general bounded $S$), is analogous to the Fourier inversion formula $\CF(\bar f)=\overline{\CF^{-1}(f)}$.
	In Section \ref{sec:catcross}, we shall further comment on the relation between $\Cross_S$ (for $\dim\Hil < \infty$) and the subfactor theoretical Fourier transform, see Theorem \ref{thm:HilbfCrossing} and Remark \ref{rmk:CatCrossisF}.
\end{remark}

\section{Crossing symmetry and endomorphisms of standard subspaces}\label{sec:endos}

We now describe a relation between the $S$-crossing map $\Cross_S$ and $S$-crossing symmetry with the standard subspace $H_S=\ker(S-1)$, see Remark~\ref{remark:modulartheory} for our notation. Given the bijection $S_H\leftrightarrow H_S$ described there, it can be expected that an $S$-crossing symmetric twist has nice properties with respect to the standard subspace $H=H_S$. As we shall see, it defines endomorphisms of $H$. Conversely, endomorphisms of $H$ can be used to generate crossing symmetric operators.

As in the previous section, we use the notation $\Hh_S$ or just
\begin{align}
	\Hh := \{T\in\CrossSet_{S_H}(\Hil\tp{2})\,:\,\Cross_{S_H}(T)=T^*\}
\end{align}
for the real space of all ($S_H$-crossable and) $S_H$-crossing symmetric operators, where $H\subset\Hil$ is any standard subspace.

In the following, we want to relate $\Hh$ to endomorphisms of $H$, i.e.\ elements of
\begin{align}
	\CE(H)
	:=
	\{V\in\CB(\Hil)\,:\,VH\subset H\}.
\end{align}
The elements of $\CE(H)$ can be characterized in different ways \cite{ArakiZsido:2005}. For example, a bounded operator $V$ lies in $\CE(H)$ if and only if it commutes with~$S_H$ in the sense $J_HVJ_H\Delta_H^{1/2}\subset\Delta_H^{1/2}V$.

To connect $\Hh$ and $\CE(H)$, it will be useful to work with the left/right creation and annihilation operators, $\xi\in\Hil$,
\begin{align*}
	a^*_{L/R}(\xi):\Hil\to\Hil\ot\Hil,\qquad a^*_L(\xi)\psi:=\xi\ot\psi,\quad a^*_R(\xi)\psi:=\psi\ot\xi,
\end{align*}
and their adjoints $a_{L}(\xi)(\psi_1\ot\psi_2)=\langle\xi,\psi_1\rangle\psi_2$ and  $a_{R}(\xi)(\psi_1\ot\psi_2)=\langle\xi,\psi_2\rangle\psi_1$.

\begin{theorem}\label{thm:CrossingEnd}
	Let $T\in \CB(\Hil\tp{2})$ and define $V_{\psi_1,\psi_2}(T) \in \CB(\Hil)$ by
	\begin{align}
		V_{\psi_1,\psi_2}(T) :=\frac{1}{2}\left( a_L(\psi_1)Ta_R^*(\psi_2)+a_L(\psi_2)Ta_R^*(\psi_1)\right), \quad \psi_1,\psi_2\in \Hil.
	\end{align}
	Then $T\in\Hh$ if and only if $T\in\CrossSet_{S_H}(\Hil\tp{2})$ and $V_{\psi_1,\psi_2}(T) \in \CE(H)$ for every $\psi_1,\psi_2\in \Hil$. 
	
	Furthermore, there exists a bijection between $\Hh$ and the bounded $\Rl$-bilinear endomorphism-valued maps $V:\Hil\times \Hil \to\CE(H)$ satisfying
	\begin{enumerate}
		\item $V(i\psi_1,\psi_2)=-V(\psi_1,i\psi_2)$;
		\item the sesquilinear form $\sigma:(\Hil \ot \Hil)\times (\Hil \ot \Hil) \to \Cl$ defined by
		\begin{align}
			\sigma(\psi_1\ot \xi,\eta\ot \psi_2)=\ip{\xi}{\left( V(\psi_1,\psi_2)+i V(i\psi_1,\psi_2)\right)\eta},
		\end{align}
		which is well defined thanks to a), is bounded.
	\end{enumerate}
\end{theorem}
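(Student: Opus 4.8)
The plan is to pass everything through matrix elements and reduce the two claimed equivalences to a single bilinear identity. First I would record the elementary formula
\begin{align*}
\langle\xi, V_{\psi_1,\psi_2}(T)\eta\rangle
=
\tfrac12\big(\langle\psi_1\ot\xi, T(\eta\ot\psi_2)\rangle+\langle\psi_2\ot\xi, T(\eta\ot\psi_1)\rangle\big),
\end{align*}
which follows immediately from $a_L(\psi)(\phi\ot\chi)=\langle\psi,\phi\rangle\chi$ and $a_R^*(\psi)\eta=\eta\ot\psi$. Next I would rewrite the two conditions to be matched. Using $S_H=(S_H^*)^*$, membership $V\in\CE(H)$ is equivalent to the weak commutation statement that $\langle S_H^*\xi, V\eta\rangle=\langle VS_H\eta,\xi\rangle$ for all $\eta\in\dom(S_H)$, $\xi\in\dom(S_H^*)$; applied to $\eta=h\in H$ (so $S_Hh=h$) this says exactly $Vh\in\ker(S_H-1)=H$. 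On the other side, crossing symmetry $\Cross_{S_H}(T)=T^*$ of a crossable $T$ is, by Definition~\ref{def:Scrossable}, the identity
\begin{align*}
\langle T(\varphi_1\ot\varphi_2), \psi_1\ot\psi_2\rangle
=
\langle\varphi_2\ot S_H^*\psi_2, T(S_H\varphi_1\ot\psi_1)\rangle,\qquad \varphi_1,\psi_1\in\dom(S_H),\ \varphi_2,\psi_2\in\dom(S_H^*).
\end{align*}

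For the equivalence in the first part I would substitute $\varphi_1=h\in H$ (so $S_H\varphi_1=h$) and relabel the remaining vectors, reading off that $\Cross_{S_H}(T)=T^*$ is equivalent, after $\Cl$-antilinear extension of $\varphi_1$ from $H$ to $\dom(S_H)=H+iH$, to the pointwise identity
\begin{align*}
\langle T(h\ot\psi_1), \psi_2\ot\xi\rangle
=
\langle\psi_1\ot S_H^*\xi, T(h\ot\psi_2)\rangle,
\end{align*}
for all $h\in H$, $\psi_1,\psi_2\in\Hil$, $\xi\in\dom(S_H^*)$ (the domain restrictions on $\psi_1,\psi_2$ being removable by boundedness of $T$). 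Denote the two sides by $C(\psi_1,\psi_2)$ and $B(\psi_1,\psi_2)$; the crucial bookkeeping point is that \emph{both} are antilinear in the first slot and linear in the second. The ``if'' direction is then immediate: if $C=B$, then the symmetrised combination $\tfrac12(C(\psi_1,\psi_2)+C(\psi_2,\psi_1))=\tfrac12(B(\psi_1,\psi_2)+B(\psi_2,\psi_1))$ is exactly the weak endomorphism identity at $\eta=h$, so each $V_{\psi_1,\psi_2}(T)$ maps $H$ into $H$. For the converse the endomorphism identity only supplies the symmetrised equation $C(\psi_1,\psi_2)+C(\psi_2,\psi_1)=B(\psi_1,\psi_2)+B(\psi_2,\psi_1)$, and the main step is to recover the unsymmetrised $C=B$: replacing $\psi_1$ by $\lambda\psi_1$ and using the common antilinear/linear type gives $\bar\lambda(C-B)(\psi_1,\psi_2)+\lambda(C-B)(\psi_2,\psi_1)=0$ for all $\lambda\in\Cl$, whence $C=B$ pointwise, and then the $\Cl$-antilinear extension in $\varphi_1$ yields the full crossing relation.

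For the bijection in the second part I would first check that $(\psi_1,\psi_2)\mapsto V_{\psi_1,\psi_2}(T)$ is bounded (by $\|T\|\,\|\psi_1\|\,\|\psi_2\|$), manifestly symmetric, $\Rl$-bilinear, and satisfies a), all by short computations using antilinearity of $a_L$ and linearity of $a_R^*$. The heart of the reconstruction is the cancellation
\begin{align*}
V(\psi_1,\psi_2)+iV(i\psi_1,\psi_2)=a_L(\psi_1)T a_R^*(\psi_2),
\end{align*}
which identifies the form of b) as the matrix element $\sigma(\psi_1\ot\xi,\eta\ot\psi_2)=\langle\psi_1\ot\xi, T(\eta\ot\psi_2)\rangle$; injectivity of $T\mapsto V_\bullet(T)$ follows since $\sigma$, hence $T$, is then determined by $V$. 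For surjectivity, given an admissible $V$ I would set $W(\psi_1,\psi_2):=V(\psi_1,\psi_2)+iV(i\psi_1,\psi_2)$, use a) to show $W$ is sesquilinear, invoke boundedness b) and Riesz representation of bounded sesquilinear forms to obtain a unique $T\in\CB(\Hil\tp{2})$ with $a_L(\psi_1)Ta_R^*(\psi_2)=W(\psi_1,\psi_2)$, then verify $V_{\psi_1,\psi_2}(T)=\tfrac12(W(\psi_1,\psi_2)+W(\psi_2,\psi_1))=V(\psi_1,\psi_2)$ using the symmetry of $V$ together with a). Since each $V(\psi_1,\psi_2)\in\CE(H)$ by hypothesis, the first part then gives $T\in\Hh$, closing the bijection.

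I expect the polarisation step in the first part to be the main obstacle: the symmetrised endomorphism condition is genuinely weaker than crossing symmetry, and it is only the precise antilinear/linear types of $C$ and $B$ — together with the freedom to vary $\psi_1,\psi_2$ independently over all of $\Hil$ — that let the $\lambda$-dependent identity reconstruct the unsymmetrised relation (the analogous miscounting of linearity types would make the argument fail). A secondary technical point is the handling of the unbounded $S_H,S_H^*$: restricting the test vector $\eta$ to $H$ rather than to all of $\dom(S_H)$ is what lets the weak identity at $\eta=h$ directly yield $Vh\in H$ and avoids any continuity-in-$\eta$ difficulties.
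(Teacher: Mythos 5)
Your proof is correct and follows essentially the same route as the paper's. Part one is the paper's argument recast in form language: the paper expresses crossing symmetry of $T$ as the weak operator relation $a_L(\psi_1)Ta_R^*(\psi_2)S_H\subset S_H\,a_L(\psi_2)Ta_R^*(\psi_1)$ and recovers the unsymmetrised relation from the endomorphism property by polarising the diagonal family $V_{\psi,\psi}(T)$, while you polarise the symmetrised identity by scaling $\psi_1\mapsto\lambda\psi_1$ and taking $\lambda=1,i$; these are the same manoeuvre. Part two is the same Riesz-representation reconstruction of $T$ from the form $\sigma$, with the same cancellation $V(\psi_1,\psi_2)+iV(i\psi_1,\psi_2)=a_L(\psi_1)Ta_R^*(\psi_2)$.

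One remark on the surjectivity step: there you invoke ``the symmetry of $V$'', which is not among the stated conditions a), b). This hypothesis is in fact indispensable. For example, $V(\psi_1,\psi_2):=\Im\langle\psi_1,\psi_2\rangle\, 1$ is bounded, $\Rl$-bilinear, $\CE(H)$-valued and satisfies a) and b) (one computes $V(\psi_1,\psi_2)+iV(i\psi_1,\psi_2)=-i\langle\psi_1,\psi_2\rangle 1$, so $\sigma$ is bounded), yet it is antisymmetric; the reconstructed operator is $T=-iF$, which is not in $\Hh$ since $\Cross_{S_H}(-iF)=-iF\neq iF=(-iF)^*$, and correspondingly $V_{\psi_1,\psi_2}(-iF)=-i\Re\langle\psi_1,\psi_2\rangle\,1\notin\CE(H)$. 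So the bijection as literally stated fails unless symmetry is added to the class of admissible maps. This is not, however, a defect of your argument relative to the paper: the paper's own proof rests on the same unstated hypothesis (its assertion $V_{\psi_1,\psi_2}(T)=V(\psi_1,\psi_2)$ is exactly where symmetry enters), and the maps $V_{\psi_1,\psi_2}(T)$ arising from $T\in\Hh$ are, as you observe, manifestly symmetric. Your proof simply makes explicit a hypothesis that the theorem's formulation omits.
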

\begin{proof}
	
	$(\Rightarrow)$ Let $T\in\Hh\subset\CrossSet_{S_H}(\Hil\tp{2})$.  Notice now that for any $\eta \in \dom(S_H)$ and $\xi \in \dom(S_H^*)$,
	\begin{align*}
		\ip{\xi}{a_L(\psi_1)Ta_R^*(\psi_2) S_H\eta}&=\ip{\psi_1\ot \xi}{T (S_H\eta\ot \psi_2)}\\
		&=\ip{\eta\ot \psi_1}{\Cross_{S_H}(T) (\psi_2\ot S_H^\ast \xi)}\\
		&=\ip{\eta\ot \psi_1}{T^* (\psi_2\ot S_H^\ast\xi)}\\
		&=\ip{a_L(\psi_2)Ta_R^*(\psi_1)\eta}{S_H^*\xi}.
	\end{align*}
	It follows that $\xi\mapsto \ip{a_L(\psi_2)Ta_R^*(\psi_1)\eta}{S_H^*\xi}$ is a continuous functional. Hence, $a_L(\psi_2)Ta_R^*(\psi_1)\eta\in \dom(S_H)$ and $S_Ha_L(\psi_2)Ta_R^*(\psi_1) \eta=a_L(\psi_1)Ta_R^*(\psi_2) S_H\eta$ for all $\eta \in \dom(S_H)$. In other words, $a_L(\psi_1)Ta_R^*(\psi_2)S_H \subset S_Ha_L(\psi_2)Ta_R^*(\psi_1)$ and it follows that $V_{\psi_1,\psi_2}(T)S_H\subset S_H V_{\psi_2,\psi_1}(T)$, thus the operator $V_{\psi_1,\psi_2}(T)$ that is symmetrized in $\psi_1,\psi_2$ is an endomorphism of $H$.
	
	$(\Leftarrow)$ Set $V_\psi(T):=V_{\psi,\psi}(T)$, $\psi\in\Hil$. For any $\eta \in \dom(S_H)$ and $\xi\in\dom(S_H^*)$, it follows that $V_\psi(T)\eta\in \dom(S_H)$ and
	\begin{align*}
		\ip{\eta\ot \psi}{T^*(\psi\ot \xi)}&=\overline{\ip{\psi\ot \xi}{T(\eta\ot \psi)}}\\
		&=\overline{\ip{\xi}{V_\psi(T)\eta}}\\
		&=\ip{S_H^*\xi}{V_\psi(T)S_H \eta}\\
		&=\ip{\psi\ot S_H^*\xi}{T(S_H\eta\ot \psi)}\\
		&=\ip{\eta\ot \psi}{\Cross_{S_H}(T)(\psi\ot \xi)}.
	\end{align*}
	Using polarization, we conclude that $T^* = \Cross_{S_H}(T)$.
	
	For the additional information, it is clear that, given $T\in \Hh$, it defines a map $V:\Hil\times\Hil\to \CE(H)$ with the required properties by setting $V(\psi_1,\psi_2)=V_{\psi_1,\psi_2}(T)$. For the converse, let $V:\Hil\times\Hil\to \CE(H)$ be as in the statement. As already mentioned in b), one can check that property a) guarantees that $\sigma:(\Hil\odot \Hil )\times (\Hil\odot \Hil) \to\bC$ defined in b) is a well-defined sesquilinear form and property b) demands that $\sigma$ is bounded. Hence, $\sigma$ can be continuously and sesquilinearly extended to $(\Hil\ot \Hil) \times (\Hil\ot \Hil)$ and, by virtue of the Riesz Representation Theorem, $\sigma$ defines a unique $T \in \CB(\Hil\tp{2})$ such that $\sigma(\Psi_1,\Psi_2)=\ip{\Psi_1}{T\Psi_2}$ for all $\Psi_1,\Psi_2\in \Hil\ot \Hil$. It is not hard to see that from the defining property of $T$ it follows that $V_{\psi_1,\psi_2}(T)=V(\psi_1,\psi_2)\in \CE(H)$.
	From $(\Leftarrow)$ above, it follows that $T\in \Hh$.
\end{proof}

The polarization argument in Theorem \ref{thm:CrossingEnd} makes it clear that one can recover the operator $T$ from the family of operators $V_\psi(T):=V_{\psi,\psi}(T)$, $\psi\in \Hil$, and the following analogous version holds.

\begin{corollary}\label{cor:familyEnd}
	Let $T\in \CB(\Hil\tp{2})$. Then $T\in\Hh$ if and only if $T\in\CrossSet_{S_H}(\Hil\tp{2})$ and $V_\psi(T) = a_L(\psi)Ta_R^*(\psi)\in \CE(H)$ for every $\psi\in \Hil$. Furthermore, $V_\psi(T)=V_\psi(\tilde T)$ for all $\psi\in \Hil$ if and only if $T=\tilde T$.
	
	In particular, if $T\in\Hh$ satisfies $V_\psi(T)=\la\|\psi\|^2\cdot 1$ for all $\psi\in\Hil$ and some $\la\in\Cl$, then $T=\la F$.
\end{corollary}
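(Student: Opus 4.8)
The plan is to reduce the whole statement to Theorem~\ref{thm:CrossingEnd} together with two different polarization arguments, one over $\Rl$ and one over $\Cl$. For the asserted equivalence I would first record that $V_\psi(T)=V_{\psi,\psi}(T)$, so the corollary is exactly the restriction of the criterion in Theorem~\ref{thm:CrossingEnd} to the diagonal $\psi_1=\psi_2$; the content is that the diagonal already suffices. The forward implication is immediate, since $V_{\psi_1,\psi_2}(T)\in\CE(H)$ for all $\psi_1,\psi_2$ specializes to $V_\psi(T)\in\CE(H)$. For the converse I would note that $(\psi_1,\psi_2)\mapsto V_{\psi_1,\psi_2}(T)$ is symmetric and $\Rl$-bilinear, because each of $a_L(\cdot)$ and $a_R^*(\cdot)$ is $\Rl$-linear, with diagonal $V_\psi(T)$. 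Hence the real polarization identity $V_{\psi_1,\psi_2}(T)=\tfrac14\bigl(V_{\psi_1+\psi_2}(T)-V_{\psi_1-\psi_2}(T)\bigr)$ writes every $V_{\psi_1,\psi_2}(T)$ as a real-linear combination of diagonal values. As $\CE(H)$ is a real-linear subspace of $\CB(\Hil)$, the hypothesis $V_\psi(T)\in\CE(H)$ for all $\psi$ then forces $V_{\psi_1,\psi_2}(T)\in\CE(H)$ for all $\psi_1,\psi_2$, and Theorem~\ref{thm:CrossingEnd} gives $T\in\Hh$.

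For the injectivity statement I would put $D:=T-\tilde T$ and assume $V_\psi(D)=0$ for all $\psi$ (the reverse implication being trivial). Fixing $\xi,\eta\in\Hil$, I consider the bounded scalar form $b_{\xi,\eta}(\alpha,\beta):=\langle\alpha\ot\xi,D(\eta\ot\beta)\rangle$, which is antilinear in $\alpha$ and linear in $\beta$, hence genuinely $\Cl$-sesquilinear, with diagonal $b_{\xi,\eta}(\psi,\psi)=\langle\xi,V_\psi(D)\eta\rangle=0$. The decisive point—and the one the real polarization above does \emph{not} provide—is that a complex sesquilinear form is determined by its diagonal, so complex polarization forces $b_{\xi,\eta}\equiv0$. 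Therefore $\langle\alpha\ot\xi,D(\eta\ot\beta)\rangle=0$ for all $\alpha,\beta,\xi,\eta\in\Hil$, and since simple tensors are total in $\Hil\ot\Hil$ this yields $D=0$, i.e.\ $T=\tilde T$. I expect this to be the only genuinely delicate step: one must recognize that although only the diagonal values $V_\psi(T)$ are prescribed, the relevant scalar forms are complex sesquilinear, so complex polarization recovers $T$ without loss, whereas the real-bilinear viewpoint used in the first paragraph would only reconstruct a symmetric part.

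For the final ``in particular'' assertion I would compute $V_\psi(F)$ directly: since $a_R^*(\psi)\eta=\eta\ot\psi$, $F(\eta\ot\psi)=\psi\ot\eta$, and $a_L(\psi)(\psi\ot\eta)=\langle\psi,\psi\rangle\eta$, one gets $V_\psi(F)=\|\psi\|^2\cdot1$. As $V_\psi(\cdot)$ is linear in its operator argument, $V_\psi(\la F)=\la\|\psi\|^2\cdot1$. The hypothesis $V_\psi(T)=\la\|\psi\|^2\cdot1$ thus reads $V_\psi(T)=V_\psi(\la F)$ for all $\psi\in\Hil$, and the injectivity just established gives $T=\la F$.
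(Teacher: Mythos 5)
Your proposal is correct; all three steps go through. It is close in spirit to the paper's (very terse) argument, but your route for the converse of the equivalence is genuinely different in one respect: you treat Theorem~\ref{thm:CrossingEnd} as a black box and upgrade the diagonal hypothesis $V_\psi(T)\in\CE(H)$ to the full-family hypothesis $V_{\psi_1,\psi_2}(T)\in\CE(H)$ by the \emph{real} polarization identity $V_{\psi_1,\psi_2}(T)=\tfrac14\bigl(V_{\psi_1+\psi_2}(T)-V_{\psi_1-\psi_2}(T)\bigr)$, using that $\CE(H)$ is a real subspace of $\CB(\Hil)$. The paper instead observes that the $(\Leftarrow)$ half of the proof of Theorem~\ref{thm:CrossingEnd} never used anything beyond the diagonal operators $V_{\psi,\psi}(T)$, so the corollary is already contained in that proof; your reduction buys logical independence from the internals of the theorem's proof, at the cost of one extra (elementary) identity. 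The injectivity statement is handled identically in both: the paper's phrase ``the polarization argument makes it clear that one can recover $T$'' is exactly your complex polarization of the sesquilinear form $b_{\xi,\eta}(\alpha,\beta)=\langle\alpha\ot\xi,(T-\tilde T)(\eta\ot\beta)\rangle$, and your emphasis that \emph{complex} sesquilinearity (not real bilinearity) is what makes the diagonal determine the form is precisely the right point. For the ``in particular'' claim, the paper appeals to Example~\ref{ex:crosssymfromend1} (with $R=1$, $V=\la 1$), whereas you compute $V_\psi(F)=\|\psi\|^2\cdot 1$ directly and invoke the injectivity just proven; these are the same computation, yours being the self-contained special case.
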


The last statement in this corollary follows from the following example, which also shows how one can use Theorem \ref{thm:CrossingEnd} to produce examples of crossing symmetric operators from families of endomorphisms.

\begin{example}\label{ex:crosssymfromend1}
	Let $R\in \CB(\Hil)$ be self-adjoint, $H\subset\Hil$ a standard subspace, and $V\in \CE(H)$. Then $(R\ot V)F\in \CrossSet_{S_H}(\Hil\tp{2})$ is the unique $S_H$-crossing symmetric operator such that $V_\psi\left((R\ot V)F\right) = \langle\psi,R\psi\rangle\cdot V$ for all $\psi\in \Hil$. In fact, a simple computation shows that
	\begin{align*}
		\ip{\eta}{\ip{\psi}{R\psi}V\xi}&=\ip{\psi}{R\psi}\ip{\eta}{V\xi}\\
		&=\ip{\psi\ot\eta}{(R\ot V)F(\xi\ot\psi)}\\
		&=\ip{\eta}{V_\psi\left((R\ot V)F\right) \xi}.
	\end{align*}
	Since $\{\ip{\psi}{R\psi}V\}_{\psi\in \Hil}\subset \CE(H)$, it follows from Corollary \ref{cor:familyEnd} that $(R\ot V)F$ is the unique $S_H$-crossing symmetric operator such that $V_\psi\left((R\ot V)F\right) = \langle\psi,R\psi\rangle\cdot V$ for all $\psi\in \Hil$.
\end{example}

In situations typical for quantum field theory, one considers standard subspaces with additional structure such as symmetry groups, cf.\ Section \ref{sec:symmetry}. A prominent example of such a scenario is a {\em standard pair}, consisting of a standard subspace $H\subset\Hil$ and a unitary one-parameter group $U(x)=e^{iPx}$, $x\in\Rl$, such that
\begin{align}
	U(x)H\subset H,\quad x\geq0,\qquad P>0.
\end{align}
By a well-known theorem of Borchers \cite{Borchers:1992}, the two one-parameter groups $U(x)$ and $\Delta_H^{it}$ generate a representation of the affine group of $\Rl$. Thanks to the underlying canonical commutation relations, there exists a unique irreducible standard pair (in the sense that the representation of the affine group is irreducible). It can be described on $\Hil=L^2(\Rl,d\te)$ as
\begin{align*}
	(U(x)\psi)(\te)=e^{ixe^\te}\psi(\te),\quad
	(\Delta_H^{it}\psi)(\te)=\psi(\te-2\pi t),\qquad
	(J_H\psi)(\te)=\overline{\psi(\te)}.
\end{align*}
Concretely, the standard subspace $H=\ker(J_H\Delta_H^{1/2}-1)$ is a the real subspace of the Hardy space $\Hardy^2(\Strip_\pi)$ on the strip $\Strip_\pi=\{\zeta\in\Cl\,:\,0<\Im\zeta<\pi\}$ fixed by the boundary condition $\overline{\psi(\te+i\pi)}=\psi(\te)$, $\te\in\Rl$ \cite[Lemma~A.1]{LechnerLongo:2014}.

In this case, the endomorphisms of $H$ that also commute with the one-parameter group $U(x)$ (denoted $\CE(U,H)\subset\CE(H)$) are explicit, and known as \emph{Longo--Witten endomorphisms}, see Theorem~2.3 in \cite{LongoWitten:2011}. Namely, $\CE(U,H)$ consists of the multiplication operators
\begin{align}\label{eq:LWEndo}
	(\varphi(P)\psi)(\te) = \varphi(e^\te)\cdot\psi(\te),
\end{align}
where $\varphi\in \Hardy^\infty(\Strip_\pi)$ is the (boundary value of) a bounded analytic function in $\Strip_\pi$, satisfying $\varphi(\te+i\pi)=\overline{\varphi(\te)}$, $\te\in\Rl$.

We now show which crossing symmetric operators are generated by these Longo--Witten endomorphisms.

\begin{example}\label{ex:Longo-Witten1}
	Consider the irreducible standard pair $(U,H)$ on $\Hil=L^2(\Rl,d\te)$ defined above, and let $\varphi\in \Hardy^\infty(\Strip_\pi)$ satisfy $\varphi(\theta+i\pi)=\overline{\varphi(\theta)}$. Consider the real bilinear map $V_\varphi:L^2(\Rl)\times L^2(\Rl) \to \B(L^2(\Rl))$ given by the multiplication operators
	\begin{equation}
		(V_\varphi(\psi_1,\psi_2)\xi)(\theta)=
		\left(\int \Re\left(\overline{\psi_1(\tilde{\theta})}\psi_2(\tilde{\theta}) \right)\varphi(\theta-\tilde{\theta})d\tilde{\theta}\right)\cdot\xi(\theta).
	\end{equation}
	Note that the conditions on $\varphi$ imply that $V_\varphi(\psi_1,\psi_2)$ is a bounded operator satisfying $V_\varphi(\psi_1,\psi_2)H\subset H$ and commuting with $U$, i.e.\ $V_\varphi(\psi_1,\psi_2)\in \CE(U,H)$. It is not hard to see that
	\begin{align}\label{goodtwist}
		(T_\varphi \Psi)(\theta_1,\theta_2) := \varphi(\theta_2-\theta_1)\Psi(\theta_2,\theta_1)
	\end{align}
	satisfies $V_\varphi(\psi_1,\psi_2)=V_{\psi_1,\psi_2}(T_\varphi)$ and therefore $T_\varphi\in\Hh_S$ is $S_H$-crossing symmetric.
\end{example}

\begin{remark}\label{remark:ModvsQFT-Crossing}
	The crossing symmetric operators $T_\varphi$ defined in \eqref{goodtwist} occur frequently in quantum field theory, where they give the simplest examples of elastic two-body scattering matrices for integrable models on two-dimensional Minkowski space \cite{Lechner:2003}. In particular, for these operators our notion of crossing symmetry coincides with the crossing symmetry from scattering theory \cite{Martin:1969_2}. For previous investigations regarding the relation of Longo--Witten endomorphisms and two-body S-matrices, see \cite{LechnerSchlemmerTanimoto:2013}.
\end{remark}

Since crossing symmetry is an $\Rl$-linear condition we can straightforwardly generalize Example \ref{ex:crosssymfromend1} to $T:=\sum_{n=1}^N (R_n\ot V_n)F$, where $R_n=R_n^*\in \B(\Hil)$ and $V_n\in\CE(H)$. It is however more interesting to go beyond the limitations of this construction by means of operator-valued integrals, see also \cite{Tanimoto:2011-1} for closely related techniques.

Let $E$ be a projection-valued measure on the Borel $\sigma$-algebra of $\Rl$ and let $A:\supp(E)\to\B(\Hil)$ be weakly integrable with respect to the measure $\mu_\psi=\ip{\psi}{E\psi}$ for all $\psi\in \Hil$. Then, by polarization,
$$\sigma(\psi_1\ot \psi_2,\psi_3\ot \psi_4):=\int \ip{\psi_1}{dE_\lambda\psi_3}\ip{\psi_2}{A(\lambda)\psi_4}$$
is well defined in pure tensors. In case $\sigma$ extends to a bounded sesquilinear form on $\Hil\ot\Hil$ we say that $A$ is integrable with respect to the projection-valued measure $B\mapsto E(B)\ot 1$. In this case, $\sigma$ defines through the formula $\sigma(\cdot,\cdot)=\ip{\cdot}{T\cdot}$ a unique bounded linear operator which we denote by
\begin{equation}
	T=\int dE_\lambda\ot A(\lambda) \in \B(\Hil\ot \Hil).
\end{equation}

One immediate consequence of this definition is that, in pure tensor products,
\begin{align}
	\ip{\psi_1\ot \psi_2}{T\psi_3\ot\psi_4}&=\int \ip{\psi_1}{A(\lambda)\psi_3}\ip{\psi_2}{dE_\lambda\psi_4}\\
	&=\overline{\int \ip{\psi_3}{A(\lambda)^\ast\psi_1}\ip{\psi_4}{dE_\lambda\psi_2}}\\
	&=\overline{\ip{\psi_3\ot\psi_4}{\int dE_\lambda\ot A(\lambda)^\ast \psi_1\ot \psi_2}},
\end{align}
which yields $T^\ast=\int dE_\lambda\ot A(\lambda)^\ast$.

We are now in the position to state the following proposition about producing crossing symmetric operators by integrating one-parameter families of endomorphisms:

\begin{proposition}\label{prop:tensorintergral}
	Let $H\subset\Hil$ be a standard subspace, $E$ a projection-valued measure on the Borel $\sigma$-algebra of $\Rl$, and $V:\supp(E)\to \CE(H)$ integrable w.r.t.\ $E\ot1$. Then
	\begin{equation}\label{integraltwist}
		T:=\left(\int dE_\lambda\ot V(\lambda) \right)\cdot F \in \B(\Hil\ot \Hil)
	\end{equation}
	is $S_H$-crossing symmetric.
\end{proposition}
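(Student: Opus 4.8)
The plan is to verify the two conditions of Corollary~\ref{cor:familyEnd}: that $T$ is $S_H$-crossable and that $V_\psi(T)\in\CE(H)$ for every $\psi\in\Hil$. Writing $T=MF$ with $M:=\int dE_\lambda\ot V(\lambda)$, the operator $T$ is bounded by the integrability hypothesis, so the only real content is the crossability of $T$ together with the identity $\Cross_{S_H}(T)=T^*$, and the endomorphism property of $V_\psi(T)$. In fact I would prove crossability and crossing symmetry simultaneously by directly computing the quadratic form $Q_{S_H}(T)$ on pure tensors and matching it with the bounded operator $T^*$; the endomorphism computation then serves as a conceptually clean cross-check via Corollary~\ref{cor:familyEnd}.

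For the endomorphism property, I would first compute, using $a_R^*(\psi)\xi=\xi\ot\psi$, $F(\xi\ot\psi)=\psi\ot\xi$ and the defining formula for $M$ on pure tensors, that
\[
	V_\psi(T)=a_L(\psi)T a_R^*(\psi)=\int V(\lambda)\,d\mu_\psi(\lambda),\qquad \mu_\psi(\cdot):=\ip{\psi}{E(\cdot)\psi},
\]
a weak integral of the endomorphisms $V(\lambda)\in\CE(H)$ against the finite positive measure $\mu_\psi$. To see $V_\psi(T)\in\CE(H)$ I would fix $h\in H$ and test against the symplectic complement $H':=\{y\in\Hil:\Im\ip{y}{h'}=0 \ \forall h'\in H\}$: since $V(\lambda)h\in H$ and $\mu_\psi$ is real, one gets $\Im\ip{V_\psi(T)h}{y}=\int\Im\ip{V(\lambda)h}{y}\,d\mu_\psi(\lambda)=0$ for all $y\in H'$, whence $V_\psi(T)h\in H''=H$ by the bipolar property $H''=H$ of standard subspaces.

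For crossability and crossing symmetry, I would evaluate $Q_{S_H}(T)(\varphi_1\ot\varphi_2,\psi_1\ot\psi_2)=\ip{\varphi_2\ot S_H^*\psi_2}{M(\psi_1\ot S_H\varphi_1)}$, again using the pure-tensor formula for $M$, to obtain $\int\ip{\varphi_2}{dE_\lambda\psi_1}\,\ip{S_H^*\psi_2}{V(\lambda)S_H\varphi_1}$. Here the two structural inputs enter: first, $V(\lambda)\in\CE(H)$ intertwines $S_H$, so $V(\lambda)S_H\varphi_1=S_H V(\lambda)\varphi_1$ for $\varphi_1\in\dom(S_H)$; second, writing $S_H=J_H\Delta_H^{1/2}$ with $J_H$ antiunitary and $\Delta_H^{\pm1/2}$ self-adjoint yields the identity $\ip{S_H^*\psi_2}{S_H\chi}=\ip{\chi}{\psi_2}$ for $\chi\in\dom(S_H)$, $\psi_2\in\dom(S_H^*)$. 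Applying both reduces the inner factor to $\ip{V(\lambda)\varphi_1}{\psi_2}$, and after conjugating the spectral measure (so that $\overline{\ip{\psi_1}{dE_\lambda\varphi_2}}=\ip{\varphi_2}{dE_\lambda\psi_1}$) this is exactly $\ip{\varphi_1\ot\varphi_2}{T^*(\psi_1\ot\psi_2)}$. Thus $T$ is crossable with $\Cross_{S_H}(T)=T^*$, i.e.\ $T\in\Hh$.

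The main obstacle is not algebraic but functional-analytic bookkeeping for a possibly unbounded $S_H$ (as in Example~\ref{ex:Longo-Witten1}): one must justify moving $V(\lambda)$ past $S_H$ pointwise in $\lambda$ under the integral sign, interchanging the inner products with the operator-valued weak integrals (licensed by the integrability assumption and the finiteness of $\mu_\psi$), and keeping careful track of the domain constraints $\varphi_1,\psi_1\in\dom(S_H)$ and $\varphi_2,\psi_2\in\dom(S_H^*)$, noting that each $V(\lambda)\in\CE(H)$ preserves $\dom(S_H)=H+iH$. Once these points are dispatched, the identity $Q_{S_H}(T)=\ip{\cdot}{T^*\,\cdot}$ follows from the short computation above.
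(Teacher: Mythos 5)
Your proposal is correct and follows essentially the same route as the paper's proof: evaluate $Q_{S_H}(T)$ on pure tensors, use the intertwining relation $V(\lambda)S_H\subset S_HV(\lambda)$ coming from $V(\lambda)\in\CE(H)$ together with the antilinear-adjoint identity $\ip{S_H^*\psi}{S_H\chi}=\ip{\chi}{\psi}$, and identify the resulting form with $\ip{\cdot}{T^*\cdot}$, which yields crossability and $\Cross_{S_H}(T)=T^*$ in one stroke. Your additional cross-check via Corollary~\ref{cor:familyEnd}, writing $V_\psi(T)=\int V(\lambda)\,d\mu_\psi(\lambda)$ and invoking $H''=H$, is sound but not needed.
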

\begin{proof}
	By assumption, $V(\lambda)\in\CE(H)$ for all $\la$, and hence $S_HV(\lambda)S_H\subset V(\lambda)$. Hence, for any $\psi_2,\psi_3\in \Hil$, $\psi_1\in \dom(S_H)$, and $\psi_4\in \dom(S_H^*)$, it holds
	\begin{align}
		Q_{S_H}(T)(\psi_1\ot \psi_2,\psi_3\ot\psi_4)
		&=
		\ip{\psi_2 \ot S_H^*\psi_4 }{T (S_H\psi_1\ot \psi_3)}\\
		&=
		\int \ip{S_H^*\psi_4}{V(\lambda)S_H\psi_1}\ip{\psi_2}{dE_\lambda\psi_3}\\
		&=
		\int\ip{V(\lambda)\psi_1}{\psi_4}\ip{\psi_2}{dE_\lambda\psi_3}\\
		&=\ip{\psi_1\ot \psi_2}{F\int dE_\lambda\ot V(\lambda)^* \ \psi_3\ot\psi_4}
		\\
		&=\ip{\psi_1\ot \psi_2}{T^*(\psi_3\ot\psi_4)},
	\end{align}
	showing that $T$ is $S_H$-crossable and $\Cross_{S_H}(T)\subset T^*$. Hence $\Cross_{S_H}(T) = T^*$.
\end{proof}

We now show how the twists \eqref{goodtwist} can be obtained from such an integral based on a natural measure and a natural family of endomorphisms.

\begin{example}
	Consider the irreducible standard pair $(U,H)$ on $\Hil=L^2(\Rl,d\theta)$ as in Example \ref{ex:Longo-Witten1}, a function $\varphi\in \Hardy^\infty(\Strip_\pi)$ with $\varphi(\theta+i\pi)=\overline{\varphi(\theta)}$, the corresponding Longo--Witten endomorphism $V := \varphi(P)$ from \eqref{eq:LWEndo}, and the self-adjoint operator $(\Theta\psi)(\te) := \te\psi(\te)$ on $L^2(\Rl,d\te)$.
	
	Then the $S_H$-crossing symmetric twist $T$ defined by \eqref{integraltwist} with $E:=E^\Theta$ the spectral measure of $\Theta$ and $V(\lambda):=\Delta^{i\frac{\lambda}{2\pi}}V\Delta^{-i\frac{\lambda}{2\pi}}$, coincides with \eqref{goodtwist}, i.e.
	\begin{align*}
		\left(\int dE^{\Theta}_\lambda\ot V(\lambda) \ F\right)\Psi(\te_1,\te_2)
		=
		\varphi(\te_2-\te_1)\cdot\Psi(\te_2,\te_1),\qquad \Psi\in\Hil^{\ot2}.
	\end{align*}
	To prove this, we consider $\psi_1,\ldots,\psi_4\in\Hil$ and calculate
	\begin{align}
		\ip{\psi_1\ot\psi_2}{T(\psi_3\ot\psi_4)}
		&=
		\int \overline{\psi_2(\theta_2)} \varphi(\theta_2-\lambda)\psi_3(\theta_2) \ip{\psi_1}{dE^{\Theta}_\lambda \psi_4} d\theta_2\\
		&= \int \overline{\psi_2(\theta_2)} \varphi(\theta_2-\theta_1)\psi_3(\theta_2) \overline{\psi_1(\theta_1)} \psi_4(\theta_1)d\theta_1 d\theta_2\\
		&= \int \overline{\psi_1(\theta_1)}\overline{\psi_2(\theta_2)} \varphi(\theta_2-\theta_1)\psi_3(\theta_2)  \psi_4(\theta_1)d\theta_1 d\theta_2\,
	\end{align}
	which implies the claim.
\end{example}

\section{Crossing symmetry and symmetry groups}\label{sec:symmetry}

As explained in the introduction, in applications of crossing symmetry to quantum field theory, one is often interested in crossing symmetric operators (w.r.t.\ some Tomita operator $S\in\CS(\Hil)$) that have additional properties, like solving a Yang--Baxter equation or exhibiting further symmetries. In this section, we focus on the latter point. Motivated by the Bisognano-Wichmann property of QFT \cite{BisognanoWichmann:1976} and its generalizations \cite{Borchers:1992,BrunettiGuidoLongo:2002,MorinelliNeeb:2023}, we are mostly interested in situations in which the representation under consideration contains the modular group of the standard subspace defining the crossing map.

\begin{definition}\label{def:G-invariant}
	Let $S\in\CS(\Hil)$. A crossing symmetric operator $T=\Cross_S(T)^*\in\Hh_S$ is called {\em invariant} under a unitary representation $U$ of some group $G$ on $\Hil$, or simply {\em $U$-invariant}, when
	\begin{align}
		[T,U_2(g)]=0, \quad g\in G,\qquad U_2(g):=U(g)\ot U(g).
	\end{align}
	The set of all $U$-invariant $S$-crossing symmetric operators is denoted~$\Hh_S^U$.
\end{definition}

Symmetries of $T$ transfer to $\Cross_S(T)$ in an $S$-twisted manner.

\begin{lemma}\label{TcommutingUxU}
	Let $S\in\CS(\Hil)$, a crossable $T\in\CrossSet_S$, and let $U\in\CU(\Hil)$ such that $[T,U\ot U]=0$. Then
	\begin{enumerate}
		\item $(S^*US^*\ot U)\Cross_S(T)(U^*\ot S^*U^*S^*) \subset \Cross_S(T)$,
		
		\item If $\Cross_S(T)=T^*$, then
		\begin{align*}
			T\subset
			(1\ot SUSU^*)T(USU^*S\ot 1).
		\end{align*}
		
		\item If $\Cross_S(T)=T^*$ and $UH\subset H'$, then
		\begin{align*}
			T\subset
			(1\ot \Delta^{-1})T(\Delta\ot 1).
		\end{align*}
		
		\item If $\Cross_S(T)=T$ and $[T,\Delta^{it}\ot\Delta^{it}]=0$, then
		\begin{align}
			F(J\ot J)T(J\ot J)F=T.
		\end{align}
	\end{enumerate}
\end{lemma}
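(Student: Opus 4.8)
\emph{Overall strategy.} The plan is to derive all four identities from the single defining relation of the crossing map,
\[
\ip{\varphi_1\ot\varphi_2}{\Cross_S(T)(\psi_1\ot\psi_2)}=\ip{\varphi_2\ot S^*\psi_2}{T(S\varphi_1\ot\psi_1)},\qquad \varphi_1,\psi_1\in\dom(S),\ \varphi_2,\psi_2\in\dom(S^*),
\]
together with the commutation hypothesis written as $T=(U\ot U)T(U^*\ot U^*)$ and the fact (Remark~\ref{remark:modulartheory}) that $S$ and $S^*$ are both antilinear involutions. For (a) I would start from the right-hand side, insert $T=(U\ot U)T(U^*\ot U^*)$, and move the unitaries onto the vectors to reach $\ip{U^*\varphi_2\ot U^*S^*\psi_2}{T(U^*S\varphi_1\ot U^*\psi_1)}$. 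Writing $U^*S^*\psi_2=S^*(S^*U^*S^*\psi_2)$ and $U^*S\varphi_1=S(SU^*S\varphi_1)$ (using $S^2=S^{*2}=1$) puts this back into crossing form with substituted vectors $SU^*S\varphi_1,\ U^*\varphi_2,\ U^*\psi_1,\ S^*U^*S^*\psi_2$, so that it equals $\ip{SU^*S\varphi_1\ot U^*\varphi_2}{\Cross_S(T)(U^*\psi_1\ot S^*U^*S^*\psi_2)}$. Transporting $(SU^*S\ot U^*)$ across the inner product, with antilinear adjoint $(SU^*S\ot U^*)^*=S^*US^*\ot U$, yields the asserted inclusion $(S^*US^*\ot U)\Cross_S(T)(U^*\ot S^*U^*S^*)\subset\Cross_S(T)$ as a form identity on the relevant dense domain.

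\emph{Parts (b) and (c).} Part (b) follows from (a) by duality. With $\Cross_S(T)=T^*$, (a) reads $(S^*US^*\ot U)T^*(U^*\ot S^*U^*S^*)\subset T^*$; taking adjoints reverses the inclusion and, using $(S^*US^*)^*=SU^*S$ and $(S^*U^*S^*)^*=SUS$, gives $T\subset(U\ot SUS)T(SU^*S\ot U^*)$. I would then factor $U\ot SUS=(1\ot SUSU^*)(U\ot U)$ and $SU^*S\ot U^*=(U^*\ot U^*)(USU^*S\ot 1)$ and absorb the inner $(U\ot U)\,T\,(U^*\ot U^*)=T$ to reach $T\subset(1\ot SUSU^*)T(USU^*S\ot 1)$. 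For (c) I would feed $UH\subset H'=\ker(S^*-1)$ into (b): for $h\in H=\ker(S-1)$ one has $Sh=h$ and $S^*Uh=Uh$, and extending by real-linearity to $\dom(S)=H+iH$ gives the exchange relation $US\subset S^*U$. Hence $SUSU^*=S(US)U^*\subset SS^*=\Delta^{-1}$ and $USU^*S\subset(S^*U)U^*S=S^*S=\Delta$ (with $SS^*=\Delta^{-1}$, $S^*S=\Delta$ from $S=J\Delta^{1/2}$), which collapses the dressings in (b) to the claimed $T\subset(1\ot\Delta^{-1})T(\Delta\ot 1)$ on a common core.

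\emph{Part (d).} Here the hypotheses are different, so I would argue directly from the crossing relation rather than through (a)--(c). Applying the relation with $\Cross_S(T)=T$ twice in succession to $\ip{J\varphi_2\ot J\varphi_1}{T(J\psi_2\ot J\psi_1)}$, and using $S^*J=\Delta^{1/2}$, $SJ=\Delta^{-1/2}$, reduces it to $\ip{(\Delta^{1/2}\ot\Delta^{1/2})(\psi_1\ot\psi_2)}{T(\Delta^{-1/2}\ot\Delta^{-1/2})(\varphi_1\ot\varphi_2)}$; the modular-invariance hypothesis $[T,\Delta^{it}\ot\Delta^{it}]=0$, continued analytically to $(\Delta\ot\Delta)^{1/2}$, undoes the conjugation of $T$ and leaves $\ip{\psi_1\ot\psi_2}{T(\varphi_1\ot\varphi_2)}$. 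Unwinding the flip and the antiunitary $J\ot J$ in $\JF(T)=F(J\ot J)T(J\ot J)F$ via $F^*=F$ and $\ip{x}{(J\ot J)y}=\overline{\ip{(J\ot J)x}{y}}$ then matches the matrix elements of $F(J\ot J)T(J\ot J)F$ with those of $T$, giving $F(J\ot J)T(J\ot J)F=T$. For bounded $S$ the same conclusion can be read off Proposition~\ref{prop:bij}(b), $\Cross_S^2=\JF\Deltat_{-1/2}\St$, where the fixed-point condition forces $\Cross_S^2(T)=T$ and modular invariance makes $\Deltat_{-1/2}$ act trivially, collapsing $\JF$ onto the identity.

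\emph{Main obstacle.} The principal difficulty throughout is the domain and graph-inclusion bookkeeping for the (generally unbounded) dressings $S^*US^*$, $SUS$ and the modular powers $\Delta^{\pm 1/2}$: each formal manipulation must be verified to hold as an inclusion in the asserted direction on a common core, for instance the entire analytic vectors for $\Delta$. The antilinear adjoint rule, and the analytic continuation of $[T,\Delta^{it}\ot\Delta^{it}]=0$ from $\Delta^{it}\ot\Delta^{it}$ to $\Delta^{1/2}\ot\Delta^{1/2}$, must be applied with the correct conjugations; in (d) in particular, carefully tracking the complex conjugation produced by the two antiunitary factors $J\ot J$ is the delicate step that fixes the right-hand side.
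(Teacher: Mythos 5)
Your parts a) and c) coincide with the paper's own argument: a) is exactly the paper's matrix-element computation (the paper starts from $\ip{SU^*S\varphi_1\ot U^*\varphi_2}{\Cross_S(T)(U^*\psi_1\ot S^*U^*S^*\psi_2)}$ and unwinds it; you run the same identity from the other end), and c) is the paper's reduction $UH\subset H'\Rightarrow USU^*\subset S^*$, hence $SUSU^*\subset SS^*=\Delta^{-1}$ and $USU^*S\subset S^*S=\Delta$, inserted into b).

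For b) you deviate: the paper simply re-runs the form computation of a) with $\Cross_S(T)=T^*$ inserted and $[T,U\ot U]=0$ used to shuffle the unitaries, whereas you take adjoints of the operator inclusion from a). For unbounded $S$ this duality step is not sound as written: for the unbounded factors $C=S^*US^*\ot U$ and $D=U^*\ot S^*U^*S^*$ one only has $(CT^*D)^*\supset D^*TC^*$, not equality, so ``taking adjoints'' does not produce the operator $(U\ot SUS)T(SU^*S\ot U^*)$. Indeed the inclusion in the direction you assert (which is also how the lemma is printed) cannot hold literally when $S$ is unbounded, since the right-hand side is not everywhere defined; what the computation actually yields --- and what is used later, see the hypothesis of Lemma~\ref{lemma:TUprops}~b) --- is that $(1\ot SUSU^*)T(USU^*S\ot 1)$ agrees with $T$ on its natural domain. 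To make b) airtight, redo the matrix-element identity as in a).

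Part d) contains the genuine error. Your route (apply the crossing identity twice using $SJ=\Delta^{-1/2}$, $S^*J=\Delta^{1/2}$, then remove $\Delta^{\pm1/2}\ot\Delta^{\pm1/2}$ by modular invariance) differs from the paper's, which iterates the boundary relation of Proposition~\ref{prop:KMS-likeCrossing}; it is viable and arguably more elementary. But carried out carefully it does \emph{not} give $F(J\ot J)T(J\ot J)F=T$. Your reduction
\begin{align}
	\ip{J\varphi_2\ot J\varphi_1}{T(J\psi_2\ot J\psi_1)}=\ip{\psi_1\ot\psi_2}{T(\varphi_1\ot\varphi_2)}
\end{align}
is correct. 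Now write $W=J\ot J$, so that $J\varphi_2\ot J\varphi_1=WF(\varphi_1\ot\varphi_2)$ and $J\psi_2\ot J\psi_1=WF(\psi_1\ot\psi_2)$; using $\ip{Wx}{Wy}=\ip{y}{x}$ and $F^*=F$, the left-hand side equals $\ip{FWTWF(\psi_1\ot\psi_2)}{\varphi_1\ot\varphi_2}$ --- the two arguments change sides. The identity therefore reads $\ip{\JF(T)\Psi}{\Phi}=\ip{\Psi}{T\Phi}$, i.e.\ $\JF(T)^*=T$, i.e.\ $F(J\ot J)T(J\ot J)F=T^*$, not $T$. Your fallback via Proposition~\ref{prop:bij}~b) shows the same thing once you keep the factor $\St$ that you dropped: $\Cross_S^2=\JF\Deltat_{-1/2}\St$, so the fixed-point hypothesis plus modular invariance give $T=\Cross_S^2(T)=\JF(T^*)$, again $\JF(T)=T^*$; it does not ``collapse $\JF$ onto the identity''.

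In fact the conclusion you assert is false in general: take $T=iF$. Then $\Cross_S(iF)=i\Cross_S(F)=iF$ by complex linearity of the crossing map and Example~\ref{example:basics}~a), and $[iF,\Delta^{it}\ot\Delta^{it}]=0$, but $F(J\ot J)(iF)(J\ot J)F=-iF=T^*\neq T$. You should be aware that the lemma as printed, and the last line of the paper's own proof, share this defect: the paper's chain gives $T^{\psi_1,\psi_2}_{\psi_3,\psi_4}(0)=T^{J\psi_4,J\psi_3}_{J\psi_2,J\psi_1}(0)$, and this right-hand side is the matrix element of $\JF(T)^*$, not of $\JF(T)$ (the slip is invisible in \cite{CorreaDaSilvaLechner:2023}, Lemma 3.4, which is being adapted, because twists there are self-adjoint). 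So the correct conclusion of d) is $F(J\ot J)T(J\ot J)F=T^*$, equivalently the stated one only under the extra assumption $T=T^*$; the ``delicate step'' you yourself flagged --- tracking the conjugations produced by the two antiunitary factors --- is precisely where your sketch asserts the wrong outcome.
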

\begin{proof}
	a) follows by direct calculation: For vectors $\varphi_1,\varphi_2,\psi_1,\psi_2$ with $\varphi_1\in\dom(SU^*S)$ and $\psi_2\in\dom(S^*U^*S^*)$, we get
	\begin{align*}
		\langle SU^*S\varphi_1\ot U^*\varphi_2,\Cross_S(T)(U^*\psi_1\ot S^*U^*S^*\psi_2)\rangle
		&=
		\langle U^*\varphi_2\ot U^*S^*\psi_2,T(U^*S\varphi_1\ot U^*\psi_1)\rangle
		\\
		&=
		\langle \varphi_2\ot S^*\psi_2,T(S\varphi_1\ot \psi_1)\rangle
		\\
		&=
		\langle\varphi_1\ot\varphi_2,\Cross_S(T)(\psi_1\ot\psi_2)\rangle,
	\end{align*}
	which implies the claim.
	
	b) follows from a) by inserting $\Cross_S(T)=T^*$ and simplifying with the help of $[T,U\ot U]=0$.
	
	c) considering the Tomita operators of $UH$ and $H'$, the assumption on $U$ implies $USU^*\subset S^*$. Hence the statement follows from b).
	
	d) The argument is an adaptation of \cite[Lemma 3.4]{CorreaDaSilvaLechner:2023}. From the condition $[T,\Delta^{it}\ot\Delta^{it}]=0$, it follows that $[\Cross_S(T),\Delta^{it}\ot\Delta^{it}]=0$. Using this latter fact, Proposition \ref{prop:KMS-likeCrossing} states that the function $t\mapsto T^{\psi_1,\psi_2}_{\psi_3,\psi_4}(t)$ has an analytic extension to $\Strip_{1/2}$ and bounded on $\overline{\Strip_{1/2}}$ satisfying $T^{\psi_1,\psi_2}_{\psi_3,\psi_4}(t+\frac{i}{2})=\Cross_S(T)^{J\psi_3,\psi_1}_{\psi_4,J\psi_2}(-t)$ for any vectors $\psi_i\in \Hil$, $1\leq i\leq 4$. Then, since $\Cross_S(T)=T$
	\begin{align}
		T^{\psi_1,\psi_2}_{\psi_3,\psi_4}(t)&=T^{\psi_2,J\psi_4}_{J\psi_1,\psi_3}\left(-t+\tfrac{i}{2}\right)\\
		&=T^{J\psi_4,J\psi_3}_{J\psi_2,J\psi_1}\left(-\left(-t+\tfrac{i}{2}\right)+\tfrac{i}{2}\right)\\
		&=\left(F (J\ot J) T(J\ot J)F\right)^{\psi_1,\psi_2}_{\psi_3,\psi_4}(t).
	\end{align}
	We conclude, at $t=0$, that $F(J\ot J)T(J\ot J)F$.
\end{proof}

We next consider the interplay of symmetries of $T\in\Hh_S$ and the associated endomorphisms $V_\psi(T)$ of $H$ defined in Corollary \ref{cor:familyEnd}.

\begin{lemma}\label{lemma:TUprops}
	Let $S\in\CS(\Hil)$, $T\in\Hh_S$, $U\in\CU(\Hil)$, and $\psi\in\Hil$.
	\begin{enumerate}
		\item If $[T,U\ot U]=0$ and $\psi$ is an eigenvector of $U$, then $[V_\psi(T),U]=0$.
		\item If $(1\ot B)T(C\ot 1)\subset T$ for closable operators $B,C$ on $\Hil$, then one has $B V_\psi(T) C\subset V_\psi(T)$.
		\item If $[T,U\ot U]=0$, then $U^*V_\psi(T) U$ is also an endomorphism of $H$.
	\end{enumerate}
\end{lemma}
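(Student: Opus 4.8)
The plan is to reduce all three parts to the recollection (Corollary~\ref{cor:familyEnd}) that $V_\psi(T)=a_L(\psi)Ta_R^*(\psi)$ lies in $\CE(H)$ whenever $T\in\Hh_S$, together with a handful of elementary intertwining relations between the bounded maps $a_L(\psi),a_R^*(\psi)$ and the operators $U,B,C$ on $\Hil$. Evaluating on pure tensors, and keeping in mind that $a_L(\cdot)$ is \emph{antilinear} in its argument, I would record
\begin{align*}
	a_L(\psi)(U\ot U)&=Ua_L(U^*\psi), & a_R^*(\psi)U&=(U\ot U)a_R^*(U^*\psi),\\
	a_L(\psi)(1\ot B)&\subset Ba_L(\psi), & a_R^*(\psi)C&=(C\ot1)a_R^*(\psi),
\end{align*}
for $U\in\CU(\Hil)$ and closable $B,C$; the first relation in the rearranged form $U^*a_L(\psi)=a_L(U^*\psi)(U^*\ot U^*)$ will also be convenient. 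Everything then follows by inserting these into $V_\psi(T)=a_L(\psi)Ta_R^*(\psi)$.

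For a), write $U\psi=c\psi$ with $|c|=1$, so that $U^*\psi=\bar c\psi$ and $a_L(U^*\psi)=a_L(\bar c\psi)=c\,a_L(\psi)$ by antilinearity. Starting from $V_\psi(T)U\eta=a_L(\psi)T(U\eta\ot\psi)$ and rewriting $U\eta\ot\psi=\bar c\,(U\ot U)(\eta\ot\psi)$, the hypothesis $[T,U\ot U]=0$ pulls $U\ot U$ through $T$, and the first intertwining relation gives
\begin{align*}
	V_\psi(T)U\eta=\bar c\,a_L(\psi)(U\ot U)Ta_R^*(\psi)\eta=\bar c\,Ua_L(U^*\psi)Ta_R^*(\psi)\eta=|c|^2\,UV_\psi(T)\eta=UV_\psi(T)\eta.
\end{align*}
Since $U$ is unitary this is a bounded, domain-free computation, so $[V_\psi(T),U]=0$. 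Part c) is equally direct: using $U^*a_L(\psi)=a_L(U^*\psi)(U^*\ot U^*)$ and $a_R^*(\psi)U=(U\ot U)a_R^*(U^*\psi)$ together with $[T,U\ot U]=0$,
\begin{align*}
	U^*V_\psi(T)U=a_L(U^*\psi)(U^*\ot U^*)T(U\ot U)a_R^*(U^*\psi)=a_L(U^*\psi)Ta_R^*(U^*\psi)=V_{U^*\psi}(T),
\end{align*}
and Corollary~\ref{cor:familyEnd} applied to the vector $U^*\psi$ shows $V_{U^*\psi}(T)\in\CE(H)$, which is exactly the assertion.

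Part b) is the only one requiring genuine care, because $B,C$ are merely closable. Formally the computation reads
\begin{align*}
	BV_\psi(T)C=Ba_L(\psi)Ta_R^*(\psi)C=a_L(\psi)(1\ot B)T(C\ot1)a_R^*(\psi)\subset a_L(\psi)Ta_R^*(\psi)=V_\psi(T),
\end{align*}
the middle inclusion being precisely the hypothesis $(1\ot B)T(C\ot1)\subset T$, and the outer steps being the two intertwining relations involving $B$ and $C$. To make this rigorous I would test against $\xi\in\dom(B^*)$, dense because $B$ is closable: for $\eta$ in the composition domain,
\begin{align*}
	\langle\xi,BV_\psi(T)C\eta\rangle=\langle\psi\ot B^*\xi,T(C\eta\ot\psi)\rangle=\langle(1\ot B)^*(\psi\ot\xi),T(C\ot1)(\eta\ot\psi)\rangle,
\end{align*}
and on the set of $\eta$ for which $T(C\eta\ot\psi)\in\dom(1\ot B)$ the hypothesis replaces the right-hand side by $\langle\psi\ot\xi,T(\eta\ot\psi)\rangle=\langle\xi,V_\psi(T)\eta\rangle$; density of $\dom(B^*)$ then yields $BV_\psi(T)C\eta=V_\psi(T)\eta$, i.e. $BV_\psi(T)C\subset V_\psi(T)$. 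The anticipated obstacle is exactly this domain bookkeeping: the inclusion symbol (rather than equality) in the statement is the shadow of the fact that $B,C$ need not be everywhere defined, and the content of b) is simply threading the unbounded operators through the bounded maps $a_L(\psi),a_R^*(\psi)$ in the correct order so that each intertwining step is an inclusion pointing the right way.
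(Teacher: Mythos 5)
Parts a) and c) of your proposal are correct. In a) you run essentially the same computation as the paper: the relations $a_L(\psi)(U\ot U)=Ua_L(U^*\psi)$ and $a_R^*(\psi)U=(U\ot U)a_R^*(U^*\psi)$, combined with $[T,U\ot U]=0$, give the covariance identity $U^*V_\psi(T)U=V_{U^*\psi}(T)$, and the eigenvector assumption removes the phase. In c), however, you take a genuinely different and cleaner route: the paper derives c) from Lemma~\ref{TcommutingUxU}~b) together with part b) of the present lemma and the relation $SV_\psi(T)S\subset V_\psi(T)$, i.e.\ by juggling the unbounded operators $SUSU^*$ and $USU^*S$, whereas you apply the same covariance identity $U^*V_\psi(T)U=V_{U^*\psi}(T)$ (which is already implicit in the paper's proof of a)) and then simply invoke Corollary~\ref{cor:familyEnd} for the vector $U^*\psi$. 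Your argument stays entirely within bounded operators and makes c) an immediate consequence of Corollary~\ref{cor:familyEnd}; nothing is lost compared to the paper's detour, which mainly serves to illustrate how part b) can be used.

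In part b) there is a genuine gap, which you flag and then gloss over. By definition, $\dom(BV_\psi(T)C)$ consists of \emph{all} $\eta\in\dom(C)$ with $V_\psi(T)C\eta=a_L(\psi)T(C\eta\ot\psi)\in\dom(B)$. Your pairing argument invokes the hypothesis $(1\ot B)T(C\ot1)\subset T$ only on the smaller set of those $\eta$ for which $T(C\eta\ot\psi)\in\dom(1\ot B)$, and the two conditions are not equivalent: $a_L(\psi)$ annihilates every component of $T(C\eta\ot\psi)$ whose first tensor factor is orthogonal to $\psi$, so $a_L(\psi)T(C\eta\ot\psi)$ may lie in $\dom(B)$ even though $T(C\eta\ot\psi)$ lies far outside $\dom(1\ot B)$. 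On this residual part of the domain the hypothesis gives no information, and density of $\dom(B^*)$ cannot bridge it, because the step that moves the adjoint across the inner product, $\langle(1\ot B)^*(\psi\ot\xi),\Theta\rangle=\langle\psi\ot\xi,(1\ot B)\Theta\rangle$, is only valid for $\Theta\in\dom(1\ot B)$. What your argument actually establishes is the inclusion restricted to those $\eta$ with $T(C\eta\ot\psi)\in\dom(1\ot B)$. To be fair, the paper's own proof of b) is just the phrase ``an analogous calculation'', so it does not resolve this either; but your claim to have made the statement rigorous by duality is overstated, and closing the gap would require a further idea (for instance exploiting the crossing symmetry of $T$, or an approximation argument showing that the equality persists on the rest of the composition domain) or else reading the inclusion in b) with that restricted domain.
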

\begin{proof}
	a) This is a direct consequence of
	\begin{align}
		V_\psi(T)
		&=
		a_L(\psi)(U^*\ot U^*)T(U \ot U)a_R^*(\psi)
		=
		U^*V_{U\psi}(T)U
		=
		U^*V_{\psi}(T)U.
	\end{align}
	An analogous calculation shows b).
	
	c) By the preceding lemma, we know that the situation in b) holds with $SUSU^*$ instead of $B$ and its adjoint $USU^*S$ instead of $C$. Hence, using that $SV_\psi(T)S\subset V_\psi(T)$ we conclude that
	\begin{align*}
		SUSU^* V_\psi(T) USU^* S \subset V\Rightarrow SU^* V_\psi(T) US \subset U^*SV_\psi(T)SU\subset U^*V_\psi(T)U
	\end{align*}
	and the claim follows.
\end{proof}

Part b) of Lemma \ref{TcommutingUxU} states that for any unitary $U$ such that $T\in \Hh_S$ commutes with $U\ot U$ one also has $T=(1\ot SUSU^*)T(USU^*S\ot 1)$. In case~$U$ is an endomorphism of $H$, i.e.\ $SUS\subset U$, this constraint is automatically satisfied (in particular for $U=\Delta^{it}$). If, on the other hand, the unitary $U$ maps $H$ into its symplectic complement $H'$, then $T = (1\ot \Delta^{-1})T(\Delta\ot 1)$ by part c), which forces the maps $t\mapsto T^{\psi_1,\psi_2}_{\psi_3,\psi_4}(t)$ to be constant and $V_\psi(T)$ to be {\em auto}morphisms of $H$.

Instead of a general analysis of invariant crossing symmetric operators, in this article we restrict ourselves to the discussion of a few instructive examples, delegating a more systematic investigation to future work. The first example shows that the whole unitary group is too large to allow for interesting crossing symmetric operators.

\begin{example}
	Consider a Hilbert space $\Hil$, its unitary group $G={\CU}(\Hil)$ in its defining representation $U$, and $S\in\CS(\Hil)$ arbitrary. Then $\Hh_S^U=\Rl F$, the real multiples of the flip.
\end{example}
One can prove this by direct calculation starting with Schur-Weyl duality to determine the commutant of $U_2$, but we prefer to give a shorter indirect argument emphasizing the connection to endomorphisms of standard subspaces.
\begin{proof}
	Let $T\in\Hh_S^U$ and $\psi\in\Hil$ be an arbitrary unit vector. There exists a unitary $U\in\CU(\Hil)$ such that $UH=H'$. By Lemma~\ref{lemma:TUprops}~b), we know that $V_\psi(T)$ commutes with $\Delta_H$. As it is an endomorphism of $H$, it follows that $V_\psi(T)$ also commutes with $J_H$. Moreover, by Lemma~\ref{lemma:TUprops}~c) for any $U\in\CU(\Hil)$, we know that $V_\psi(T)$ is an endo- and hence automorphism of $UH$ by the same argument. So $V_\psi(T)$ commutes with $J_{UH}=UJ_HU^*$ for all $U\in\CU(\Hil)$. Hence $V_\psi(T)$ commutes with the group $\CJ$ generated by all antiunitary involutions on~$\Hil$, which implies $V_\psi(T)=\la1$.
	
	We claim $V_\xi(T)=\la\|\xi\|^21$ for all $\xi\in\Hil$. Indeed, if $\|\xi\|=1$, there exists a unitary $W$ such that $W\xi=\psi$. Then, since $T=(W^*\ot W^*)T(W\ot W)$, the argument of Lemma~\ref{lemma:TUprops}~a) shows $V_\xi(T)=W^*V_\psi(T)W=\la1$. For general $\xi$ we then have $V_\xi(T)=\la\|\xi\|^21$, as claimed. Now Corollary~\ref{cor:familyEnd} yields the result.
\end{proof}

In the second example, we pick an antiunitary involution $J$ on $\Hil$ and consider the ``$J$-orthogonal'' group
\begin{align}
	G_J:=\{U\in\CU(\Hil)\,:\,[U,J]=0\},
\end{align}
which acts irreducibly on $\Hil$. This example is most interesting in case $1<N:=\dim\Hil<\infty$ is finite, i.e.\ when $G\cong O(N)$ is the orthogonal group. In this case, we may consider an involution $S=J\Delta^{1/2}\in\CS(\Hil)$, the vector $\xi_S$ \eqref{def:xiS}, and the associated rescaled projection $P_S=\Cross_S(1)$ \eqref{eq:PS}.

\begin{example}
	Consider $G_J\cong O(N)$ on $\Hil\cong\Cl^N$ in its defining representation~$U$. Let $S=J\Delta^{1/2}\in\CS(\Hil)$ be an antilinear involution with antiunitary part $J$. Then
	\begin{align}
		\Hh_S^U
		&=
		\begin{cases}
			\{z\cdot1+\overline z \,P_J+x\,F\,:\,z\in\Cl,x\in\Rl\}  & \Delta=1
			\\
			\{xF+iy(1-P_J)\,:\,x,y\in\Rl\} & \Delta\neq1,\quad N=2,\\
			\Rl F & \Delta\neq1,\quad N>2.
		\end{cases}
	\end{align}
\end{example}
\begin{proof}
	By classical invariant theory we know that the commutant of the diagonal representation $U_2$ is spanned by $1,F,P_J$ \cite{GoodmanWallach:2009}. Hence we may represent any $T\in\Hh_S^U$ in the form $T=z1+wP_J+xF$, $z,w,x\in\Cl$.
	
	For $\Delta=1$, we find with the help of $\Cross_J(1)=P_J$ and $\Cross_J(P_J)=1$ \eqref{eq:CrossId}
	\begin{align*}
		\Cross_J(T)-T^*
		&=
		(z-\overline w)P_J+(w-\overline z)+(x-\overline x)F,
	\end{align*}
	which vanishes if and only if $w=\overline z$ and $x\in\Rl$, as claimed.
	
	For general modular operator $\Delta\neq1$, the condition $T\in\Hh_S^U$ reads $X=0$, where 
	\begin{align}
		X&:=
		zP_S
		-\overline w\,P_J
		+w\,\Delta^{1/2}\ot\Delta^{-1/2}-\overline{z} \,1+(x-\bar x)F.
	\end{align}
	Suppose there exists $1\neq\la\in\sigma(\Delta)$ with multiplicity bigger than one, and pick two orthogonal eigenvectors $v_\la,\tilde v_\la$ of unit length. Then 
	\begin{align}
		\langle v_\la\ot J\tilde v_\la,X(v_\la\ot J\tilde v_\la)\rangle
		&=
		w\la-\overline z,
	\end{align}
	because $J\tilde v_\la \perp v_\la$. Since also $\la^{-1}$ lies in the spectrum of $\Delta$, this implies $w=z=0$, and hence $x\in\Rl$, as claimed. In case the eigenvalues of $\Delta$ different from~$1$ have multiplicity 1, but there exist $\la,\mu\in\sigma(\Delta)$ with $\mu\not\in\{\la,\la^{-1}\}$, we pick corresponding normalized eigenvectors $v_\la$, $v_\mu$ and arrive at
	\begin{align}
		\langle v_\la\ot v_\mu,X(v_\la\ot v_\mu)\rangle
		&=w\frac{\la^{1/2}}{\mu^{1/2}}-\overline z,
	\end{align}
	which again implies $z=w=0$ and $x\in\Rl$ as before.
	
	This shows that $T$ is a real multiple of the flip whenever $\Delta\neq1$ and $N>2$. The remaining case is $N=2$ and $\sigma(\Delta)=\{\la,\la^{-1}\}$ with $\la\neq1$. In this case computations in the tensor basis corresponding to the eigenbasis of $\Delta$ show that~$X$ vanishes if and only if $x\in\Rl$ and $z=-w=:iy$ is purely imaginary, $y\in\Rl$. This proves the claim.
\end{proof}

In the previous example, we obtained besides real multiples of the flip a non-trivial crossing symmetric solution $T = i (P_J-1)$ for $N=2$ and modular operator $\Delta=\begin{pmatrix}
	\la&0\\0&\la^{-1}
\end{pmatrix}$ for $\la\neq1$. In canonical identification $M_2\ot M_2\cong M_4$, it takes the form
\begin{align}\label{eq:klR}
	i\begin{pmatrix}
		-1\\
		&&1\\
		&1\\
		&&&-1
	\end{pmatrix}.		
\end{align}

Much richer examples occur for infinite-dimensional representations. We therefore move on to non-compact groups and discuss an example that is of prominent interest in physics. Namely, take $G=SO(1,1)\rtimes\Rl^2$ to be the Poincar\'e group in dimension two, which can be viewed as the group of maps $(x,\la)$ (with $x\in\Rl^2$, $\la\in\Rl$)
\begin{align}
	(x,\la):\Rl^2\to\Rl^2,\qquad y\mapsto \La_\la y+x, \qquad
	\La_\la
	=
	\begin{pmatrix}
		\cosh\la&\sinh\la\\
		\sinh\la&\cosh\la
	\end{pmatrix}.
\end{align}
As representation, we consider the irreducible positive energy representation $U_m$ of mass $m>0$, given by
\begin{align}
	\Hil
	=
	L^2(\Rl,d\te),\qquad
	(U^{(m)}(x,\la)\psi)(\te)
	=
	e^{ip_m(\te)\cdot x}\psi(\te-\la),\quad p_m(\te)=
	m\begin{pmatrix}
		\cosh\te\\\sinh\te
	\end{pmatrix}.
\end{align}

The commutant of the diagonal representation $U^{(m)}_2$ on $L^2(\Rl^2)$ can be determined by general facts about massive representations, see Lemma 3.2 in \cite{LechnerLongo:2014}, or by a direct argument that we give below.

\begin{lemma}
	Let $m>0$. Then the representation $U^{(m)}_2$ satisfies
	\begin{align}
		\{{U^{(m)}_2}|_{\Rl^2}\}'
		=
		L^\infty(\Rl^2)\vee F.
	\end{align}
\end{lemma}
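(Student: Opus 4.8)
The plan is to reduce the statement to a multiplicity computation for the total two-particle momentum and then read off the commutant from an explicit change of variables. First I would record that the restriction of $U^{(m)}_2=U^{(m)}\ot U^{(m)}$ to the translation subgroup $\Rl^2$ acts on $\Hil\ot\Hil=L^2(\Rl^2,d\theta_1\,d\theta_2)$ purely by multiplication,
\[
	(U^{(m)}_2(x,0)\Psi)(\theta_1,\theta_2)=e^{i(p_m(\theta_1)+p_m(\theta_2))\cdot x}\,\Psi(\theta_1,\theta_2),
\]
so that the two commuting self-adjoint generators are multiplication by the components of the total momentum $q:=p_m(\theta_1)+p_m(\theta_2)$. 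By Stone's theorem together with the spectral theorem for a commuting pair of self-adjoint operators, the von Neumann algebra generated by $\{U^{(m)}_2(x,0):x\in\Rl^2\}$ is the abelian algebra $\CP$ of all multiplication operators by bounded Borel functions of $q$. Since the commutant of a set coincides with the commutant of the von Neumann algebra it generates, the claim is equivalent to $\CP'=L^\infty(\Rl^2)\vee F$.

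The inclusion $L^\infty(\Rl^2)\vee F\subseteq\CP'$ is immediate: every multiplication operator commutes with $\CP\subseteq L^\infty(\Rl^2)$ since $L^\infty(\Rl^2)$ is abelian, and $F$ commutes with each $U^{(m)}_2(x,0)$ because $q$ is invariant under $\theta_1\leftrightarrow\theta_2$. For the reverse inclusion I would pass to center-of-mass and relative rapidity $u=\tfrac12(\theta_1+\theta_2)$, $r=\theta_1-\theta_2$, a measure-preserving linear bijection of $\Rl^2$. The addition formulas give
\[
	q=2m\cosh\tfrac r2\,(\cosh u,\sinh u),
\]
so, writing $q^0,q^1$ for the components and $q^2:=(q^0)^2-(q^1)^2$, the total momentum determines $u$ through $\tanh u=q^1/q^0$ and $|r|$ through $q^2=4m^2\cosh^2\tfrac r2\ge 4m^2$. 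Consequently the map $(\theta_1,\theta_2)\mapsto q$ is two-to-one onto the interior $\{q^2>4m^2\}$ of its range, with the two preimages exchanged by the flip $r\mapsto-r$, and it is injective only on the Lebesgue-null diagonal $\{r=0\}$.

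Splitting $r\in\Rl\setminus\{0\}$ into $(|r|,\operatorname{sign}r)$ identifies $\Hil\ot\Hil$ with $L^2(\Rl_u\times(0,\infty)_{|r|})\ot\Cl^2$. Under this identification $\CP$ becomes multiplication by bounded functions of $(u,|r|)$ tensored with the identity of $\Cl^2$; hence $\CP'$ equals $L^\infty(\Rl_u\times(0,\infty)_{|r|})$ tensored with all of $M_2(\Cl)$, using that $L^\infty$ is its own commutant. In the same coordinates $F$ becomes $1\ot\sigma$ with $\sigma$ the $2\times2$ flip, while $L^\infty(\Rl^2)$ becomes multiplication by bounded functions of $(u,|r|)$ tensored with the diagonal matrices. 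Since the diagonal matrices together with $\sigma$ generate all of $M_2(\Cl)$, the algebra $L^\infty(\Rl^2)\vee F$ equals $L^\infty(\Rl_u\times(0,\infty)_{|r|})$ tensored with $M_2(\Cl)$, which is exactly $\CP'$. This establishes $\CP'=L^\infty(\Rl^2)\vee F$, as required.

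The step requiring the most care is the measure-theoretic bookkeeping in the last paragraph: one must verify that the diagonal $\{\theta_1=\theta_2\}$, equivalently the boundary $q^2=4m^2$, is null so that the two-to-one structure holds almost everywhere, and that as the symbol ranges over $L^\infty(\Rl^2)$ the induced fibre operators fill out precisely the diagonal $2\times2$ matrices, i.e.\ that $L^\infty(\Rl^2)$ is maximal abelian inside the decomposable operators. Equivalently, one may phrase the second half as the direct-integral decomposition of $\CP'$ over the joint spectrum of $q$, where the constant fibre dimension $2$ makes the identification $\CP'=L^\infty\,\overline{\ot}\,M_2(\Cl)$ transparent; the algebraic fact that diagonals and the flip generate $M_2(\Cl)$ is then elementary.
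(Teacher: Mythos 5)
Your proof is correct, and it rests on the same geometric observation as the paper's: the total momentum $q=p_m(\theta_1)+p_m(\theta_2)$ determines $(\theta_1,\theta_2)$ up to the tensor flip. Where you diverge is in how the commutant is extracted from this fact. The paper applies the Stone--Weierstra\ss{} theorem to conclude that the von Neumann algebra $\M$ generated by the translations is exactly the algebra of symmetric $L^\infty$ functions, i.e.\ $\M=L^\infty(\Rl^2)\cap\{F\}'$, and then gets the claim in one abstract stroke: since $L^\infty(\Rl^2)$ is maximal abelian, $L^\infty(\Rl^2)\cap\{F\}'=\bigl(L^\infty(\Rl^2)\vee F\bigr)'$, so taking commutants finishes the proof. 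You instead identify $\M$ with the bounded Borel functions of $q$ via the spectral theorem and then compute its commutant concretely: the center-of-mass coordinates $(u,r)$ exhibit $\Hil\ot\Hil$ as $L^2(\Rl_u\times(0,\infty)_{|r|})\ot\Cl^2$ with $\M$ acting as $L^\infty\ot 1$, so the commutant is visibly $L^\infty\,\overline{\ot}\,M_2(\Cl)$, which you then match against $L^\infty(\Rl^2)\vee F$ by noting that diagonals together with the flip $\sigma$ generate $M_2(\Cl)$. Each route has its merits: the paper's argument is shorter and coordinate-free, though its Stone--Weierstra\ss{} step (passing from uniform density of symmetric trigonometric-type functions to weak-operator density in symmetric $L^\infty$) is stated tersely and hides exactly the measure-theoretic points you handle explicitly; your multiplicity-two direct-integral picture costs more bookkeeping (null diagonal, Borel equivalence of $q$ with $(u,|r|)$) but replaces the density argument by the spectral theorem and makes the fibre structure explicit --- indeed, in your coordinates the subsequent representation $T=M^-_f+M^-_gF$ of operators commuting with the full representation, used in the paper right after the lemma, becomes immediate.
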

\begin{proof}
	The von Neumann algebra $\M=(U_2^{(m)}|_{\Rl^2})''$ is generated by the  multiplication operators multiplying with the symmetric functions $f_x(\te_1,\te_2)=\exp(ix\cdot(p_m(\te_1)+p_m(\te_2)))$, $x\in\Rl^2$. As $p_m(\te_1)+p_m(\te_2)=p_m(\te_1')+p_m(\te_2')$ is equivalent to $(\te_1,\te_2)=(\te_1',\te_2')$ or $(\te_1,\te_2)=(\te_2',\te_1')$, an application of the Stone-Weierstra\ss\, Theorem shows that $\M$ consists of all symmetric functions, i.e.\ $\M=L^\infty(\Rl^2)\cap\{F\}'$. This implies the claim by taking commutants.
\end{proof}

As an operator $T\in\CB(\Hil\ot\Hil)$ commuting with $U_2^{(m)}$ has also to commute with the boosts, acting as $(U^{(m)}_2(0,\la)\Psi)(\te_1,\te_2)=\Psi(\te_1-\la,\te_2-\la)$, it follows that it is of the form
\begin{align}\label{TForm}
	T=M_f^-+M_g^-F,\qquad
	(M_f^-\Psi)(\te_1,\te_2)
	=
	f(\te_2-\te_1)\Psi(\te_1,\te_2),
\end{align}
with $f,g\in L^\infty(\Rl)$.

We now consider a standard subspace $H\subset L^2(\Rl)$ that is natural in the context of quantum field, namely the one that was already discussed in the context of standard pairs in Section~\ref{sec:endos} and is given by
\begin{align}\label{WedgeH}
	(J_H\Psi)(\te)=\overline{\Psi(\te)},
	\qquad
	\Delta_H^{it}=U^{(m)}(0,2\pi t).
\end{align}
Recall that in this case, $\dom\Delta_H^{1/2}=H+iH=\Hardy^2(\Strip_\pi)$ consists of the (boundary values of) functions in the Hardy space $\Hardy^2(\Strip_\pi)$ on the strip $\Strip_\pi$. As before, we will write $\Hardy^\infty(\Strip_\pi)$ for the analytic bounded functions on this strip.

\begin{theorem}\label{thm:CrossUPoincare}
	Consider the irreducible positive energy representation $U^{(m)}$ of the two-dimensional Poincar\'e group with mass $m>0$, and the standard subspace~$H$ given by the modular data \eqref{WedgeH}.
	\begin{enumerate}
		\item An operator $T\in\{U^{(m)}_2\}'$ is $J_H$-crossable if and only if it is of the form
		\begin{align}
			T=
			M_g^-F,\,\qquad g\in L^\infty(\Rl).
		\end{align}
		It is $J_H$-crossing symmetric if and only if $g$ is real.
		
		\item An operator $T\in\{U^{(m)}_2\}'$ is $S_H$-crossable if and only if
		\begin{align}
			T=
			M_g^-F,\,\qquad g\in\Hardy^\infty(\Strip_{\pi}).
		\end{align}
		It is $S_H$-crossing symmetric if and only if $g(\te+i\pi)=\overline{g(\te)}$, and unitary if and only if $g\in \Hardy^\infty(\Strip_{\pi})$ is inner.
	\end{enumerate}
\end{theorem}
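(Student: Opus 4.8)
By the preceding lemma and \eqref{TForm}, every $T\in\{U^{(m)}_2\}'$ can be written $T=M_f^-+M_g^- F$ with $f,g\in L^\infty(\Rl)$. Since $\CrossSet_S$ is a linear space, the plan is to treat the \emph{flip} summand $M_g^- F$ and the \emph{diagonal} summand $M_f^-$ separately, first settling crossability and then reading off crossing symmetry (and, in (b), unitarity) by comparing the crossed operator with $T^*$. The key structural input is that $M_g^- F$ is exactly the weighted flip $F_w$ of Example~\ref{example:basics}~c) with $w(\theta_1,\theta_2)=g(\theta_2-\theta_1)$, so that example directly gives $M_g^- F\in\CrossSet_{J}$ together with $\Cross_J(M_g^- F)=M_{g_-}^- F$, where $g_-(x):=g(-x)$.

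\textbf{Part (a).} For the diagonal part I would compute
\[
Q_J(M_f^-)(\varphi_1\ot\varphi_2,\psi_1\ot\psi_2)=\int \overline{\varphi_1(\theta_1)\varphi_2(\theta_1)}\,f(\theta_2-\theta_1)\,\psi_1(\theta_2)\psi_2(\theta_2)\,d\theta_1\,d\theta_2,
\]
which depends on the test vectors only through the diagonal restrictions $\theta\mapsto\Phi(\theta,\theta)$ of $\Phi=\varphi_1\ot\varphi_2$ and $\Psi=\psi_1\ot\psi_2$. Testing with functions concentrated in an $\eps$-neighbourhood of the diagonal at two rapidities $a,b$ for which $\int\overline{\varphi}\,f(b-a)\,\psi\neq0$ makes this form blow up like $\eps^{-1}$ while the $L^2(\Rl^2)$-norms stay bounded; this is the same obstruction as for the identity in Example~\ref{example:basics}~d), and forces $f=0$ whenever $f\neq0$ on a set of positive measure. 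As $Q_J(M_g^- F)$ is always bounded, $T\in\CrossSet_{J}$ iff $f=0$, i.e.\ $T=M_g^- F$. Crossing symmetry then reduces to $M_{g_-}^- F=\Cross_J(M_g^- F)=(M_g^- F)^*=M_{\overline{g_-}}^- F$, which holds iff $g$ is real.

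\textbf{Part (b).} Here $S_H=J\Delta^{1/2}$ is unbounded, with $\Delta^{1/2}$ acting as the imaginary shift $\theta\mapsto\theta+i\pi$ dictated by \eqref{WedgeH}, and $H+iH=\Hardy^2(\Strip_\pi)$. After disposing of the diagonal part ($f=0$) by the same near-diagonal obstruction, I would handle the flip part via Proposition~\ref{prop:CrossableAndBounded}~c) with the value $\Cross_J(M_g^- F)=M_{g_-}^- F$ from part (a): $M_g^- F\in\CrossSet_{S_H}$ precisely when $(\Delta^{1/2}\ot1)M_{g_-}^- F(1\ot\Delta^{-1/2})$ is bounded, in which case it equals $\Cross_{S_H}(M_g^- F)$. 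Using $F(\Delta^{-1/2}\ot1)=(1\ot\Delta^{-1/2})F$ and the above shifts, a direct computation identifies this operator as $M_h^- F$ with $h(x)=g(i\pi-x)$; boundedness of $M_h^-$ is equivalent to the boundary value $g(\cdot+i\pi)$ being bounded, i.e.\ to $g$ admitting a bounded analytic extension to $\Strip_\pi$, which is exactly $g\in\Hardy^\infty(\Strip_\pi)$. Comparing $\Cross_{S_H}(M_g^- F)=M_h^- F$ with $(M_g^- F)^*=M_{\overline{g_-}}^- F$ yields crossing symmetry iff $g(i\pi-x)=\overline{g(-x)}$, i.e.\ $g(\theta+i\pi)=\overline{g(\theta)}$. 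Finally, $(M_g^- F)^*(M_g^- F)$ is multiplication by $|g|^2$ of the rapidity difference, so $M_g^- F$ is unitary iff $|g|=1$ a.e.\ on $\Rl$; together with analyticity in $\Strip_\pi$ and the boundary relation (which propagates $|g|=1$ to both edges) this says precisely that $g$ is inner.

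\textbf{Main obstacle.} The genuinely delicate point is Part (b): transporting $J$-crossability to $S_H$-crossability through the \emph{unbounded} modular operators and correctly matching the imaginary modular shifts $\Delta^{\pm1/2}$ with the analytic structure of the strip, so that boundedness of the conjugated multiplication operator becomes the membership $g\in\Hardy^\infty(\Strip_\pi)$. A secondary technical point is making the ``no diagonal part'' reduction ($f=0$) rigorous for unbounded $S_H$, where the near-diagonal test functions must be reconciled with the domains of $S_H$ and $S_H^*$ and with the possibility that the flip summand interferes; I expect the cleanest route is to note that on product test vectors with $\varphi_2,\psi_1$ of disjoint support the flip form $Q_{S_H}(M_g^- F)$ vanishes identically, isolating $Q_{S_H}(M_f^-)$ and reducing the unbounded case to the same diagonal-restriction mechanism as in Example~\ref{example:basics}~d).
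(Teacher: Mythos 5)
Part (a) of your proposal is correct and is essentially the paper's own argument (the weighted-flip Example~\ref{example:basics}~c) for $M_g^-F$, and the diagonal-restriction obstruction for $M_f^-$). The genuine gap is in part (b), at the step where you ``dispose of the diagonal part ($f=0$)''. The form $Q_{S_H}$ is only defined on $\dom(S_H)\odot\dom(S_H^*)$, so in your disjoint-support trick you must take $\psi_1,\varphi_1\in\dom(S_H)=\Hardy^2(\Strip_\pi)$ and $\varphi_2,\psi_2\in\dom(S_H^*)=\dom(\Delta_H^{-1/2})$, a Hardy space over the reflected strip. Nonzero boundary values of Hardy-class functions cannot vanish on a subset of $\Rl$ of positive measure, so two nonzero vectors $\varphi_2\in\dom(S_H^*)$ and $\psi_1\in\dom(S_H)$ can never have essentially disjoint supports: the flip term $Q_{S_H}(M_g^-F)$, which carries the factor $\overline{\varphi_2(\te_1)}\psi_1(\te_1)$, can never be switched off this way, and the compactly supported near-diagonal test functions needed for the blow-up are likewise unavailable in these domains (note also that the diagonal products now involve $S_H\varphi_1$ and $S_H^*\psi_2$ rather than the vectors themselves). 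This is precisely why the paper does not argue at the level of $Q_{S_H}$ at all: it invokes the necessary KMS-type condition of Proposition~\ref{prop:KMS-likeCrossing}, which is formulated for \emph{arbitrary} $\psi_1,\ldots,\psi_4\in\Hil$, the unbounded $\Delta^{1/2}$ being traded for the unitaries $\Delta^{it}$ plus analyticity in $t$. There one may take $\psi_2,\psi_4$ compactly supported with disjoint supports, so that the $f$-term $I_f$ vanishes on a real neighbourhood of $t=0$; analyticity then forces $I_f\equiv0$, whence $f=0$.

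Your treatment of the flip part is a genuinely different route from the paper's and is legitimate in outline: since $M_g^-F\in\CrossSet_{J_H}$ by part (a), combining parts a) and c) of Proposition~\ref{prop:CrossableAndBounded} does show that $M_g^-F\in\CrossSet_{S_H}$ if and only if $(\Delta^{1/2}\ot1)M_{g_-}^-F(1\ot\Delta^{-1/2})$ is bounded, and your identification of this operator as $M_h^-F$ with $h(x)=g(i\pi-x)$, of the boundary condition $g(\te+i\pi)=\overline{g(\te)}$, and your unitarity discussion all agree with the theorem. But the pivotal equivalence ``bounded $\Leftrightarrow$ $g\in\Hardy^\infty(\Strip_\pi)$'' is asserted, not proved: writing $g(\te_1+i\pi-\te_2)$ presupposes that $g$ has an analytic continuation at all, and the implication ``bounded $\Rightarrow$ $g$ extends to a bounded analytic function on $\Strip_\pi$'' is the analytic core of the theorem. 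The paper obtains it from the analyticity and uniform boundedness of the $I_g$-term furnished by Proposition~\ref{prop:KMS-likeCrossing}, together with Hardy-space arguments (Lemma~A.4 of \cite{LongoWitten:2011}); an argument of this kind must be supplied on your route as well, exactly at the point you flag as the main obstacle. Until both points are repaired, part (b) is not a proof.
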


\begin{proof}
	a) Let $T=M_f^-+M_g^-F\in\{U_2\}'$. According to Example~\ref{example:basics}~c), $M_g^-F$ is $J_H$-crossable with $\Cross_{J_H}(M_g^-F)=M_{g_-}^-F$. As crossability is a linear condition, we only have to show that $M_f^-$ is not crossable.
	
	So let $\psi_1,\psi_2,\varphi_1,\varphi_2\in L^2(\Rl)$ and consider
	\begin{align*}
		Q_{J_H}(M_f^-)(\varphi_1\ot\varphi_2,\psi_1\ot\psi_2)
		&=
		\int_{\Rl^2} \overline{\varphi_1(\te_1)\varphi_2(\te_1)}f(\te_2-\te_1)\psi_1(\te_2)\psi_2(\te_2)\,d\te_1d\te_2.
	\end{align*}
	Extending from $\varphi_1\ot\varphi_2$ to $\Phi\in L^2(\Rl)\odot L^2(\Rl)$ and from $\psi_1\ot\psi_2$ to $\Psi\in L^2(\Rl)\odot L^2(\Rl)$ yields
	\begin{align*}
		Q_{J_H}(M_f^-)(\Phi,\Psi)
		&=
		\int_{\Rl^2} \overline{\Phi(\te_1,\te_1)}f(\te_2-\te_1)\Psi_1(\te_2,\te_2)\,d\te_1d\te_2
		,
	\end{align*}
	which is continuous in $\Phi$, $\Psi$ if and only if $f=0$. Hence $M_f^-$ is not crossable for $f\neq0$ (cf.~Lemma~\ref{lemma:crossability}~b)).
	
	b) We first use the necessary condition from Proposition~\ref{prop:KMS-likeCrossing}: If $T:=M_f^-+M_g^-F$ is $S_H$-crossable, then, for all $\psi_1,\ldots,\psi_4\in L^2(\Rl)$, the function $m:\Rl\to\Cl$,
	\begin{align*}
		m(t)&:=
		\int_{\Rl^2} \overline{\psi_1(\te_1)}\overline{\psi_2(\te_2+2\pi t)}\,f(\te_2-\te_1)\,\psi_3(\te_1+2\pi t)\psi_4(\te_2)\,d\te_1\,d\te_2
		\\
		&\qquad +
		\int_{\Rl^2} \overline{\psi_1(\te_1)}\overline{\psi_2(\te_2)}\,g(\te_2-\te_1+2\pi t)\,\psi_3(\te_2)\psi_4(\te_1)\,d\te_1\,d\te_2
	\end{align*}
	has an analytic continuation to the strip $\Strip_{1/2}$ and satisfies a uniform bound of the form $\sup_{\zeta\in\Strip_{1/2}}|m(\zeta)|
	\leq
	C\|\psi_1\|\|\psi_2\|\|\psi_3\|\|\psi_4\|$. 
	
	Denoting the two integral terms of $m$ by $m=I_f+I_g$, we first claim $I_f=0$. The reason for this is that for $\psi_2$ and $\psi_4$ compactly supported with disjoint supports, $I_f(t)$ vanishes in a neighbourhood of $t=0$. Hence, for those $\psi_2,\psi_4$, the second term $I_g$ must extend analytically to $\Strip_{1/2}$. Thus also $I_f$ must have an analytic continuation, i.e.\ we see $I_f(t)=0$ for all $t\in\Rl$. This implies $I_f(0)=0$ for all compactly supported $\psi_1,\ldots,\psi_4$, which forces $f=0$.
	
	The analyticity and boundedness of $I_g$ imply $g\in\Hardy^\infty(\Strip_\pi)$ (see Lemma A.4 in \cite{LongoWitten:2011} for similar techniques). According to Proposition \ref{prop:KMS-likeCrossing}, $I_g(\frac{i}{2})$ coincides with the expectation value $\langle J_H\psi_3\ot\psi_1,T^*(\psi_4\ot J_H\psi_2)\rangle$, which in view of $((M_g^-F)^*\Psi)(\te_1,\te_2)=\overline{g(\te_1-\te_2)}\Psi(\te_2,\te_1)$ is given by
	\begin{align*}
		\int_{\Rl^2}\psi_3(\te_1)\overline{\psi_1(\te_2)}\overline{g(\te_1-\te_2)}\psi_4(\te_2)\overline{\psi_2(\te_1)}\,d\te_1\,d\te_2.
	\end{align*}
	Comparison now yields $g(\te+i\pi)=\overline{g(\te)}$, as claimed.
	
	If conversely $T=M_g^-F$ with $g\in\Hardy^\infty(\Strip_\pi)$ satisfying this boundary condition, it is easy to check that $T$ is $S_H$-crossing symmetric.
\end{proof}

This theorem shows that the two body S-matrices used in quantum field theory (see, for example, \cite{Smirnov:1992,Lechner:2003}) exhaust all possibilities of Poincaré invariant crossing symmetric operators.

The examples discussed here exemplify the interplay of unitary representations and crossing symmetry. It is worth mentioning that \eqref{eq:klR} and the solutions described in Thm.~\ref{thm:CrossUPoincare} automatically satisfy the Yang--Baxter equation. We leave it as an interesting question to determine $\Hh^U_S$ for other natural pairs of Tomita operators and representations.

\section{The categorical crossing map}\label{sec:catcross}

In this section, we change our perspective and start from a real (or just rigid) object $X$ in an abstract $C^*$-tensor category $C$ to define a categorical crossing map $\CatCross_X$ (Definition \ref{def:CatCross}).
If $C$ is chosen to be the category of endomorphisms or bimodules of an infinite or tracial von Neumann factor, and $X$ is a Q-system (a particular real object used to describe subfactors), then the categorical crossing map is essentially the subfactor/planar algebraic Fourier transform, as introduced by Ocneanu \cite{Ocneanu:1991}. See Definition \ref{def:CatCross} and Remark \ref{rmk:CatCrossisF}. If $C$ is the category of finite-dimensional Hilbert spaces, then the categorical crossing map coincides with the concrete crossing map considered so far (here restricted to finite-dimensional $\Hil$) with respect to a (bounded) antilinear involution $S$. See Theorem \ref{thm:HilbfCrossing}. Later, we show how to produce a family of solutions $T$ of the crossing symmetry equation, $\CatCross_X(T) = T^*$, and (quantum) Yang--Baxter equation, $T_1T_2T_1 = T_2T_1T_2$, starting from an arbitrary Q-system (or $C^*$-Frobenius algebra) in $C$.
When $C$ is again finite-dimensional Hilbert spaces, we produce examples of $T$-twisted Araki--Woods von Neumann algebras in the sense of \cite{CorreaDaSilvaLechner:2023}, with cyclic and separating vacuum vector, generalizing an example of \cite{Yang:2023}.

For background on $C^*$-tensor categories, we refer to Sections 2 and 3 in \cite{LongoRoberts:1997}, Chapter 2 in \cite{NeshveyevTuset:2013}, Section 3.1 in \cite{BischoffKawahigashiLongoRehren:2015} and Section 2.10 in \cite{EtingofGelakiNikshychOstrik:2015}.
Unless otherwise mentioned, we use $C$ to denote a (strict) $C^*$-tensor category. Denote by $\id$ and $\otimes$ respectively the tensor unit object (assumed to be simple) and the tensor multiplication of $C$, and by $1_X$ the identity morphism from an object $X$ to itself. Morphisms in $C$ are denoted by $t:X\to Y$, or by $t\in\Hom_C(X,Y)$, where $\Hom_C(X,Y)$ is the space of morphisms from $X$ to $Y$ in $C$. The adjoint and norm of morphisms are respectively denoted by $t^*$ and~$\| t \|$.

Recall the definitions of rigid (or dualizable), self-conjugate (or self-dual), and real object.

\begin{definition}\label{def:rigidobject}
	An object $X$ in a $C^*$-tensor category $C$ is called \emph{rigid} or \emph{dualizable} if there is another object $\bar X$ in $C$, called \emph{conjugate} or \emph{dual} of $X$, and a pair of morphisms $\ev_X : \bar X \otimes X \to \id, \coev_X : \id \to X \otimes \bar X$ in $C$, respectively called \emph{evaluation} and \emph{coevaluation}, solving the so-called \emph{conjugate equations} for $(X, \bar X)$, namely
	\begin{align}\label{eq:conjeqns}
		(1_X \otimes \ev_X) (\coev_X \otimes 1_X) = 1_X, \quad(\ev_X \otimes 1_{\bar X}) (1_{\bar X} \otimes \coev_X) = 1_{\bar X}.
	\end{align}
	Note that if $\bar X$ is a conjugate of $X$ with $\ev_X, \coev_X$, then $X$ is a conjugate of $\bar X$ with $\ev_{\bar X} := \coev_X^*, \coev_{\bar X} := \ev_X^*$.
\end{definition}

\begin{remark}\label{rmk:dimension}
	When a conjugate object $\bar X$ exists, it is uniquely determined by $X$ up to unitary isomorphism. Moreover, a rigid $X$ has \emph{finite} \emph{intrinsic dimension} $d(X)$ in $C$. We refer to \cite[Sect.~3]{LongoRoberts:1997}, \cite[Chap. 2.2]{NeshveyevTuset:2013}, \cite[Chap. 2.2]{BischoffKawahigashiLongoRehren:2015} for more information on $d(X)$ and only note that for $C=\Hilbf$, the intrinsic dimension $d(X)$ coincides with the vector space dimension $\dim X$.
\end{remark}

For our definition of categorical crossing map, we recall the notion of self-conjugate and real objects (\cite[Sect.~5]{LongoRoberts:1997}, \cite[Remark 5.6 3)]{Mueger:2003-I}).

\begin{definition}\label{def:realobject}
	A rigid object $X$ is called \emph{self-conjugate} or \emph{self-dual} if $\bar X \cong X$, i.e.\ if $\bar X = X$ can be chosen. A self-conjugate object $X$ is called \emph{real} if $\ev_X$ and $\coev_X$ can be chosen such that $\coev_X = \ev_X^*$, i.e.\ $\coev_X = \coev_{\bar X}$ or equivalently $\ev_X = \ev_{\bar X}$.
\end{definition}

Note that for real $X$, either of the conjugate equations in \eqref{eq:conjeqns} is the adjoint of the other one.

Now we define the categorical crossing map $\CatCross_X$ associated with a rigid object $X$ in $C$. We shall focus on the case where $X$ is real.

\begin{definition}\label{def:CatCross}
	Let $C$ be a $C^*$-tensor category and let $X$ be a real object in $C$, together with $\bar X = X, \ev_X, \coev_X = \ev_X^*$ as in Definition \ref{def:realobject}.
	We define the \emph{categorical crossing map}
	\begin{align}
		\CatCross_X &: \Hom_C(X\otimes X, X\otimes X)\to \Hom_C(X\otimes X, X\otimes X),\\
		\CatCross_X(T) &:= (1_{X\otimes X} \otimes \ev_X) (1_X \otimes T \otimes 1_X) (\coev_X \otimes 1_{X\otimes X}).
	\end{align}
\end{definition}

If $X$ is only a rigid object with conjugate $\bar X$, one can analogously define a crossing map $\CatCross_X(T):\Hom_C(\bar X\otimes X, \bar X\otimes X)\to\Hom_C(X\otimes \bar X, X\otimes \bar X)$.

\begin{remark}\label{rmk:CatCrossisF}
	The reader familiar with subfactors and planar algebras will immediately recognise (by drawing a picture, cf.\ Figure \ref{fig:crossing}) the similarity between the map $\CatCross_X$ in Definition \ref{def:CatCross}, with $X$ either rigid or real in $C$, and the \emph{subfactor theoretical/planar algebraic Fourier transform}, see, e.g.\ Section II.7 in \cite{Ocneanu:1991} and Section 3 in \cite{BischJones:2000}. To make the relation precise, we anticipate that a particular class of real objects is given by Q-systems and C*-Frobenius algebras in $C$, see Definition \ref{def:Qsys} and \ref{def:CstarFrobAlg}. A Q-system $X$ (e.g.\ in $C = \End(\CN)$, the $C^*$-tensor category of endomorphisms of an infinite factor $\CN$ with intertwining operators) encodes the data of (a finite-index \cite{Jones:1983_2,Kosaki:1986} conditional expectation onto) a subfactor $\CN\subset\CM$, see Section 5 in \cite{Longo:1990,Longo:1994}, cf.\ Section 7 in \cite{Giorgetti:2022}.
	Let $X = \bar Y \otimes Y$ be a splitting of $X:\CN\to\CN$ (the object of the Q-system) by means of $Y:\CN\to\CM$ (the inclusion morphism) and $\bar Y:\CM\to\CN$ (its conjugate morphism), see e.g.\ Theorem 3.11 in \cite{BischoffKawahigashiLongoRehren:2015} and Theorem 3.36 in \cite{ChenHernandezPalomaresJonesPenneys2022}.
	Then the subfactor Fourier transform maps $\Hom_{\End(\CN)}(\bar Y \otimes Y, \bar Y \otimes Y)$ onto $\Hom_{\End(\CM)}(Y \otimes \bar Y, Y \otimes \bar Y)$ by exactly the same formula used for $\CatCross_Y$, with $Y$ rigid, but not real, in the 2-$C^*$-category of morphisms between $\CN$ and $\CM$ or vice versa. 
	Thus, $\CatCross_Y$ is the ordinary (two-shaded, finite-index) subfactor Fourier transform (or its inverse, depending on the conventions). Subfactorially, it maps the two-step relative commutant of $\CN$ in the Jones tower generated by $\CN \subset \CM$ onto the one of $\CM$ (i.e.\ it maps one 2-box space of the subfactor planar algebra onto the other). Instead, $\CatCross_X$ maps morphisms in $C=\End(\CN)$ to itself (it is unshaded). It maps the four-step relative commutant of $\CN$ onto itself.
	For further references on the subfactor Fourier transform in various contexts (finite or infinite subfactors with either finite or infinite index, paragroups, planar algebras, fusion bialgebras, etc.), see e.g.\ \cite{NillWiesbrock:1995, Bisch:1997,Sato:1997,JiangLiuWu:2016,JaffeJiangLiuRenWu:2020,LiuPalcouxWu:2021,BischoffDelVecchioGiorgetti:2021,BischoffDelVecchioGiorgetti:2022}.
\end{remark}

The following definition makes sense only for self-conjugate objects.

\begin{definition}\label{def:CatCrossingSymmet}
	Let $X$ be a self-conjugate object in $C$ with a chosen $\ev_X$ and $\coev_X(=\ev_X^* \textrm{ if $X$ is real})$.
	We call an element $T \in \Hom_C(X\otimes X, X\otimes X)$ \emph{crossing symmetric} if it fulfills
	\begin{align}\label{eq:catcrossingsymmet}
		\CatCross_X(T) = T^*.
	\end{align}
\end{definition}

To explain the connection of $\CatCross_X$ with the Hilbert space crossing map $\Cross_S$, we specialize to $C = \Hilbf$, the $C^*$-tensor category of finite-dimensional complex Hilbert spaces with complex linear maps, and tensor unit $\id = \bC$. As is well known, every object in $\Hilbf$ is real, see Definition \ref{def:realobject}. Denote the Hilbert space scalar product by $\langle \cdot , \cdot \rangle_\Hil$ or just $\langle \cdot , \cdot \rangle$. The theorem below establishes in particular a characterization of standard subspaces of finite-dimensional Hilbert spaces in terms of categorical data in $\Hilbf$.

\begin{theorem}\label{thm:HilbfCrossing}
	Let $C = \Hilbf$ and let $\Hil$ be an object in $\Hilbf$.
	\begin{enumerate}
		\item There is a one-to-one correspondence between solutions of the conjugate equations \eqref{eq:conjeqns} with $\coev_\Hil = \ev_\Hil^*$ and antilinear involutions $S\in\CS(\Hil)$. This correspondence is given by
		\begin{align}\label{eq:defSfromrealHil}
			\ev_\Hil (v \otimes w) &= \langle S v, w\rangle, \quad v,w \in \Hil,\\
			\label{eq:coevS}
			\coev_\Hil(\la)&=\la\cdot\xi_S=
			\la\sum_n e_n\ot Se_n,\qquad \la\in\Cl,
		\end{align}
		where $\{e_n\}_n$ is an arbitrary orthonormal basis of $\Hil$, and $\xi_S$ is as in \eqref{def:xiS}.
		\item Let $S\in\CS(\Hil)$ and consider the solutions to the conjugate equations defined by the bijection in a). Then the corresponding categorical crossing map $\CatCross_\Hil$ coincides with the Hilbert space crossing map $\Cross_S$.
	\end{enumerate}
\end{theorem}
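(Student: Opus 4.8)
The plan is to treat the two parts separately, although both reduce to careful but elementary computations once the dictionary between the categorical data $(\ev_\Hil,\coev_\Hil)$ and the operator $S$ is in place. Throughout I use that $\dim\Hil<\infty$, so that every antilinear involution is automatically bounded with bounded inverse and lies in $\CS(\Hil)$, that $\CS(\Hil)$ consists \emph{exactly} of the antilinear involutions (with polar decomposition $S=J\Delta^{1/2}$, $\Delta=S^*S$ strictly positive), and that every object of $\Hilbf$ is real, so no domain issues arise.

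For part a), I would start from an antilinear involution $S\in\CS(\Hil)$ and define $\ev_\Hil$ by \eqref{eq:defSfromrealHil}. The first point is that $\ev_\Hil$ is genuinely $\Cl$-linear on $\Hil\ot\Hil$: linearity in $w$ is clear, and linearity in $v$ holds because the antilinearity of $S$ cancels against the antilinearity of the inner product in its first slot. Next I would compute the adjoint $\ev_\Hil^*:\Cl\to\Hil\ot\Hil$ from $\langle v\ot w,\ev_\Hil^*\la\rangle=\overline{\ev_\Hil(v\ot w)}\,\la$ together with the expansion $Sv=\sum_n\langle v,e_n\rangle Se_n$; this yields $\ev_\Hil^*(\la)=\la\,\xi_S$ with $\xi_S=\sum_n e_n\ot Se_n$, which simultaneously proves \eqref{eq:coevS} and re-derives the basis-independence of $\xi_S$ already noted in Example~\ref{example:basics}~e). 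The heart of the matter is then the identity
\begin{align*}
	(1_\Hil\ot\ev_\Hil)(\coev_\Hil\ot 1_\Hil)\,v
	=\sum_n e_n\,\langle S^2 e_n,v\rangle,
\end{align*}
obtained by substituting the two formulas, which equals $v$ for all $v$ if and only if $S^2=1$. Thus the first conjugate equation is equivalent to $S$ being an involution; since $\Hil$ is real and $\coev_\Hil=\ev_\Hil^*$, the second equation is the adjoint of the first and then holds automatically. For the bijection I would run this backwards: given any solution with $\coev_\Hil=\ev_\Hil^*$, the Riesz lemma applied to $w\mapsto\overline{\ev_\Hil(v\ot w)}$ defines an antilinear $S$ with $\langle Sv,w\rangle=\ev_\Hil(v\ot w)$; the computation of $\ev_\Hil^*$ forces $\coev_\Hil$ to be $\la\mapsto\la\,\xi_S$, and the conjugate equation forces $S^2=1$. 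Injectivity is immediate, since $\ev_\Hil$ determines all inner products $\langle Sv,w\rangle$ and hence $S$.

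For part b), the plan is to unwind Definition~\ref{def:CatCross} on matrix elements and recognize the quadratic form $Q_S$. Inserting $\coev_\Hil\ot 1_{\Hil\tp2}$ applied to $\psi_1\ot\psi_2$ produces $\sum_n e_n\ot Se_n\ot\psi_1\ot\psi_2$; applying $1_\Hil\ot T\ot 1_\Hil$ to the middle two legs and then $1_{\Hil\tp2}\ot\ev_\Hil$ to the last two legs, and pairing against $\varphi_1\ot\varphi_2$, I obtain
\begin{align*}
	\langle\varphi_1\ot\varphi_2,\CatCross_\Hil(T)(\psi_1\ot\psi_2)\rangle
	=\sum_n\langle\varphi_1,e_n\rangle\,\langle\varphi_2\ot S^*\psi_2,T(Se_n\ot\psi_1)\rangle,
\end{align*}
where I have rewritten the $\ev_\Hil$-contraction using the defining relation $\langle S(\,\cdot\,),\psi_2\rangle=\langle S^*\psi_2,\,\cdot\,\rangle$ of the antilinear adjoint. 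Pulling the scalars $\langle\varphi_1,e_n\rangle$ through the linear map $T$ and using once more $\sum_n\langle\varphi_1,e_n\rangle Se_n=S\varphi_1$ collapses the sum to $\langle\varphi_2\ot S^*\psi_2,T(S\varphi_1\ot\psi_1)\rangle$, which is exactly $Q_S(T)(\varphi_1\ot\varphi_2,\psi_1\ot\psi_2)$. Since in finite dimensions every $T$ is $S$-crossable (Example~\ref{example:basics}~e)), this identifies $\CatCross_\Hil(T)$ with $\Cross_S(T)$ via Definition~\ref{def:Scrossable}.

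I expect the genuine obstacle to be bookkeeping rather than conceptual. In part b) one must track precisely which of the four tensor legs each of $\coev_\Hil$, $T$, and $\ev_\Hil$ acts on, and handle the antilinearity of $S$ correctly when moving it across the inner products — in particular the appearance of $S^*$ from the $\ev_\Hil$-contraction and of $S$ from the $\coev_\Hil$-insertion. In part a) the only delicate point is to argue cleanly that the conjugate equations are equivalent to $S^2=1$ without circularity, and to confirm that the resulting $S$ is a bona fide element of $\CS(\Hil)$; both are immediate in finite dimensions, where $\CS(\Hil)$ is exactly the set of antilinear involutions and the polar decomposition $S=J\Delta^{1/2}$ exists with $\Delta$ strictly positive.
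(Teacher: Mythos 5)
Your proposal is correct, and part a) follows essentially the same path as the paper: define $S$ from $\ev_\Hil$ (or vice versa) via Riesz/bilinear-form uniqueness, compute $\ev_\Hil^*$ in an orthonormal basis to obtain $\la\mapsto\la\,\xi_S$, and observe that the conjugate equation reduces to $\sum_n e_n\langle S^2e_n,v\rangle=v$, i.e.\ to $S^2=1$ (the paper writes this sum as $(S^*)^2v$, which is the same statement after taking adjoints). Part b) is where you genuinely diverge: the paper verifies the identity only on operators of the form $(A\ot B)F$, computing $\CatCross_\Hil((A\ot B)F)=(S^*B^*S^*\ot A)F$ and matching this against the partial-transpose formula of Example~\ref{example:basics}~b), then extends by linearity using that such operators span $\CB(\Hil\ot\Hil)$; you instead compute the matrix elements of $\CatCross_\Hil(T)$ for an \emph{arbitrary} $T$, use $\langle Se_l,\psi_2\rangle=\langle S^*\psi_2,e_l\rangle$ and $\sum_n\langle\varphi_1,e_n\rangle Se_n=S\varphi_1$ to collapse the basis sum, and recognize the result as the quadratic form $Q_S(T)$, so that the claim follows directly from Definition~\ref{def:Scrossable} together with crossability in finite dimensions. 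Your route is more self-contained (it bypasses both the spanning argument and Example~\ref{example:basics}~b)) and makes the link to $Q_S$ transparent; the paper's route has the side benefit of exhibiting the explicit partial-transpose formula for product-times-flip operators, which is the form in which the crossing map is used elsewhere in the text. Both arguments are complete and correct.
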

\begin{proof}
	a) In $\Hilbf$, the evaluation map $\ev_\Hil : \Hil \otimes \Hil \to \bC$ is a bilinear form on~$\Hil$. Hence there exists a unique antilinear map $S: \Hil \to \Hil$ (not a morphism in $\Hilbf$) such that \eqref{eq:defSfromrealHil} holds.
	
	The adjoint $\coev_\Hil = \ev_\Hil^*$ is easily computed: For $\la\in\Cl$, $v,w\in\Hil$, and some orthonormal basis $\{e_n\}_n$ of $\Hil$, we have
	\begin{align}
		\langle v\ot w,\coev_\Hil(\la)\rangle
		&=
		\la\overline{\langle Sv,w\rangle}
		=
		\la\sum_n\langle v,e_n\rangle\langle e_n,S^*w\rangle
		=
		\langle v\ot w,\la\sum_n e_n\ot Se_n\rangle.
	\end{align}
	This proves \eqref{eq:coevS} and shows that $\xi_S$ depends only on $S$ and not on the choice of orthonormal basis. The (equivalent) conjugate equations \eqref{eq:conjeqns} now read, $v\in\Hil$,
	\begin{align}
		v=
		(1_\Hil\ot\ev_\Hil)(\coev_\Hil\ot1_\Hil)v
		&=
		(1_\Hil\ot\ev_\Hil)(\xi_S\ot v)
		=
		\sum_ne_n\langle S^2e_n,v\rangle
		=
		(S^*)^2v,
	\end{align}
	and clearly hold if and only if $S$ is an involution.
	
	We thus see that any solution $\coev_\Hil=\ev_\Hil^*$ of the conjugate equations \eqref{eq:conjeqns} defines a unique involution $S\in\CS(\Hil)$. Conversely, given arbitrary $S\in\CS(\Hil)$, we may define $\ev_\Hil$ via \eqref{eq:defSfromrealHil} and set $\coev_\Hil:=\ev_\Hil^*$. As these are morphisms satisfying the conjugate equations, we have established the claimed one-to-one correspondence.
	
	b) Let $A,B\in\CB(\Hil)$ and $T:=(A\ot B)F\in\CB(\Hil\ot\Hil)$. Then we have, $v,w\in\Hil$,
	\begin{align*}
		\CatCross_\Hil(T)(v\ot w)
		&=
		(1\ot1\ot\ev_\Hil)(1\ot T\ot1)(\coev_\Hil\ot1\ot1)(v\ot w)
		\\
		&=
		(1\ot1\ot\ev_\Hil)(1\ot T\ot1)(\xi_S\ot v\ot w)
		\\
		&=
		\sum_n
		(1\ot1\ot\ev_\Hil)(1\ot A\ot B\ot1)(e_n\ot v\ot Se_n\ot w)
		\\
		&=
		\sum_n
		(e_n\ot Av)\langle SBS e_n,w\rangle
		\\
		&=
		(S^*B^*S^*\ot A)F(v\ot w).
	\end{align*}
	Thus $\CatCross_\Hil(T)=(S^*B^*S^*\ot A)F$, which coincides with $\Cross_S(T)$ by Example~\ref{example:basics}~b). Since any $T\in\CB(\Hil\ot\Hil)$ is a linear combination of maps of the form $(A\ot B)F$, the proof is finished.
\end{proof}

\begin{remark}
	\leavevmode
	\begin{enumerate}
		\item The evaluation and coevaluation maps in \eqref{eq:defSfromrealHil} and \eqref{eq:coevS} capture the crossing of the identity as described in Example~\ref{example:basics}~e), namely $\Cross_S(1)=|\xi_S\rangle\langle\xi_S|$.
		
		\item Theorem~\ref{thm:HilbfCrossing} does not directly generalize to infinite-dimensional Hilbert spaces because given an involution $S\in\CS(\Hil)$, the corresponding evaluation map $\Hil\ot\Hil\ni v\ot w\mapsto\langle Sv,w\rangle$ is bounded if and only if $\dim\Hil<\infty$.
		
		\item Theorem~\ref{thm:HilbfCrossing} establishes a bijection between antilinear involutions $S\in\CS(\Hil)$ and solutions $\coev_\Hil^*=\ev_\Hil$ of the conjugate equations. As $\CS(\Hil)$ is in bijection with the set of all real standard subspaces in $\Hil$, or the set of all real structures on $\Hil$, it also establishes the characterization of standard subspaces of finite-dimensional Hilbert spaces in terms of $\ev$, $\coev = \ev^*$ solving the conjugate equations \eqref{eq:conjeqns} in $\Hilbf$.
		
		\item From the definition of $S$ in \eqref{eq:defSfromrealHil} it follows immediately that $S = S^*$ (hence $S$ is antiunitary, $\Delta = 1_\Hil$ and $S = J$) if and only if $\ev_\Hil : \Hil \otimes \Hil \to \bC$ is symmetric, i.e.\ if and only if $\ev_\Hil\circ F = \ev_\Hil$, where $F : \Hil\otimes \Hil \to \Hil\otimes \Hil$ is the tensor flip (the unitary and symmetric braiding of $C=\Hilbf$).
		
		If the given solution $\ev_\Hil$, $\coev_\Hil = \ev_\Hil^*$ of the conjugate equations \eqref{eq:conjeqns} is \emph{standard} in the sense of Section 3 in \cite{LongoRoberts:1997}, cf.\ Definition 2.2.14 in \cite{NeshveyevTuset:2013}, then necessarily $\ev_\Hil \circ F = \ev_\Hil$ by the much more general results in Section 4 in \cite{LongoRoberts:1997}, cf.\ Section 7 in \cite{DoplicherRoberts:1989-1}.
		Hence, we conclude that if $\ev_\Hil$, $\coev_\Hil = \ev_\Hil^*$ is standard, then $\Delta = 1_\Hil$ and $\CatCross_\Hil = \Cross_J$, as a special case of Theorem \ref{thm:HilbfCrossing}.
		
		Assuming standardness, the claimed equalities $\Delta = 1_\Hil$ and $\CatCross_\Hil = \Cross_J$ can also be checked directly. We only need to know that the standard solutions of the conjugate equations \eqref{eq:conjeqns} for $(\Hil,\bar \Hil)$ in $C=\Hilbf$, where $\bar \Hil$ is the complex conjugate Hilbert space, are given by (up to unitaries in $\Hom_C(\Hil,\Hil)$, cf.\ Example 2.2.2 in \cite{NeshveyevTuset:2013})
		\begin{align}\label{eq:evcoevstandardinHilbf}
			\ev_\Hil : \bar v \otimes w &\mapsto \langle v, w\rangle_\Hil \\
			\coev_\Hil : \lambda \in \bC &\mapsto \lambda \sum_i e_i \otimes \bar e_i
		\end{align}
		where $\{e_i\}_i$ runs over an arbitrary orthonormal basis of $\Hil$. Moreover, by combining \eqref{eq:defSfromrealHil} and \eqref{eq:evcoevstandardinHilbf} we conclude that the converse is also true, namely $\Delta = 1_\Hil$ if and only if $\ev_\Hil$, $\coev_\Hil = \ev_\Hil^*$ is standard.
	\end{enumerate}
\end{remark}

Back to the general setting of Definition \ref{def:CatCross}, we look at $C^*$-tensor categories different from $\Hilbf$ and their crossing maps.
Starting from a Q-system or $C^*$-Frobenius algebra (defined below) in an arbitrary $C^*$-tensor category $C$,  we find a family of solutions of the crossing symmetry equation $\CatCross_X(T) = T^*$ as in Definition \ref{def:CatCrossingSymmet}, that furthermore solve the (quantum) Yang--Baxter equation $T_1 T_2 T_1 = T_2 T_1 T_2$. If $C=\Hilbf$, the previous solutions are concrete linear operators on $\Hil\otimes\Hil$, and we show that they are invariant under the modular group $\Delta^{it} \otimes \Delta^{it}$, $t\in\bR$, as in Definition \ref{def:G-invariant}.

\begin{definition}\label{def:Qsys}
	Let $C$ be a $C^*$-tensor category. A \emph{Q-system} in $C$ is a triple $(X,m,\iota)$ where $X$ is an object in $C$, $m : X \otimes X \to X$ and $\iota : \id \to X$ are morphisms in $C$, respectively called multiplication and unit of $X$, such that
	\begin{align*}
		m (m \otimes 1_X) &= m (1_X \otimes m),  &\text{(associativity)}\\
		m (\iota \otimes 1_X) &= 1_X = m (1_X \otimes \iota), &\text{(unitality)}\\
		(m \otimes 1_X) (1_X \otimes m^*) &= m^* m = (1_X \otimes m)(m^* \otimes 1_X), &\text{(}C^*\text{-Frobenius property})\\
		mm^* &= d 1_X, \text{   for some } d>0  &\text{(specialness)}\\
		\iota^* \iota &= 1_{\id}. &\text{(normalization)}
	\end{align*}
\end{definition}

\begin{remark}
	There are several redundancies in the definition above.
	Also, it is known from \cite{LongoRoberts:1997}, see also Lemma 3.7 in \cite{BischoffKawahigashiLongoRehren:2015}, that specialness implies the $C^*$-Frobenius property. Moreover, the normalization condition can always be achieved by rescaling $\iota$ and $m$, as $\iota^*\iota : \id \to \id$, hence $\iota^*\iota = \lambda 1_{\id}$, for some $\lambda > 0$, by the simplicity of the tensor unit of $C$. Hence the essential ingredients for a Q-system are associativity, unitality (the algebra structure, see below), and specialness (the unitary separability property, cf.\ \cite{GiorgettiYuanZhao:2024}).
\end{remark}

We refer the reader e.g.\ to \cite{BischoffKawahigashiLongoRehren:2015,HeunenVicary:2019,ChenHernandezPalomaresJonesPenneys2022,CarpiGaudioGiorgettiHillier:2023}, and references therein, for the motivation of the study of Q-systems, initially tied to finite-index subfactors \cite{Longo:1994}, and for the subsequent manifold applications. 

\begin{definition}\label{def:CstarFrobAlg}
	A triple $(X,m,\iota)$ in $C$ as above fulfilling only associativity and unitality is called an \emph{algebra} in $C$. If $(X,m,\iota)$ fulfills in addition the C*-Frobenius property, it is called a \emph{$C^*$-Frobenius algebra} in $C$.
\end{definition}

\begin{remark}
	By Lemma 3.5 in \cite{BischoffKawahigashiLongoRehren:2015}, see also Lemma 2.10 in \cite{CarpiGaudioGiorgettiHillier:2023}, for every algebra in $C$, $mm^* : X \to X$ is invertible (hence strictly positive). If the algebra is normalized, i.e.\ $\iota^* \iota = 1_{\id}$, then $mm^* \geq 1_X$ in $\Hom_C(X,X)$.
	Moreover, every $C^*$-Frobenius algebra in $C$ is isomorphic to a Q-system by suitably rescaling $\iota$ and $m$ with $(\iota^*\iota)^{1/2}$ and $(mm^*)^{1/2}$, cf.\ Corollary 3.6 in \cite{BischoffKawahigashiLongoRehren:2015}.
\end{remark}

The most natural examples of $C^*$-Frobenius algebra objects are the following:

\begin{example}
	Every finite-dimensional Hopf $C^*$-algebra \cite{Longo:1994,SzymanskiPeligrad:1994}, cf.\ \cite{Woronowicz:1987, VaesVanDaele:2001}, or Hopf $^*$-algebra coaction on a finite-dimensional $C^*$-algebra defines a special $C^*$-Frobenius algebra object in $C = \Hilbf$ by Lemma 2.2 in \cite{AranoDeCommer:2019}, cf.\ \cite{LarsonSweedler:1969,Pareigis:1971}.
\end{example}

\begin{example}\label{ex:CstarFrobAlginHilbf}
	More generally, a $C^*$-Frobenius algebra object (not necessarily special, nor normalized) in $C = \Hilbf$ is the same as a concrete finite-dimensional Frobenius $C^*$-algebra by Lemma 2.2 in \cite{NeshveyevYamashita:2018}, cf.\ \cite{Abrams:1999}. We further comment on this at the end of the section.
\end{example}

The object $X$ of any $C^*$-Frobenius algebra is rigid (Definition \ref{def:rigidobject}) and real (Definition \ref{def:realobject}) in $C$. Indeed, the morphisms
\begin{align}
	\ev_X &:= \iota^* m : X \otimes X \to \id \\
	\coev_X &:= m^* \iota : \id \to X \otimes X
\end{align}
solve the conjugate equations \eqref{eq:conjeqns}, just by using the $C^*$-Frobenius property and unitality, and they fulfill $\coev_X = \ev_X^*$ with $\bar X = X$.

Q-systems fulfill also $\ev_X \coev_X = \iota^* m m^* \iota = d 1_{\id}$, where $d$ is the positive constant entering in the definition of specialness. In this case, by the results of \cite{LongoRoberts:1997}, we necessarily have that $d \geq d(X)$, where $d(X)$ is the intrinsic dimension of $X$ as a rigid object in $C$, cf.\ Remark \ref{rmk:dimension}. Indeed, $d(X)$ is the minimal possible value arising from solutions of the conjugate equations and it is related to the minimal index for subfactors \cite{Kosaki:1986, Hiai:1988, Longo:1989}.

\begin{definition}
	A Q-system $(X,m,\iota)$ in $C$ is called \emph{standard} if $d = d(X)$, or equivalently, if $\ev_X \coev_X = d(X) 1_{\id}$.
	A Q-system is called \emph{irreducible} (or haploid, or connected) if the vector space of morphisms $\Hom_C(\id, X)$ is one-dimensional, i.e.\ $\Hom_C(\id,X)=\bC\iota$.
\end{definition}

\begin{remark}
	As observed in Section 6 in \cite{LongoRoberts:1997}, see also Remark 5.6, 3) in \cite{Mueger:2003-I} and Theorem 2.9 in \cite{NeshveyevYamashita:2018}, an irreducible Q-system in $C$ is automatically standard. Furthermore, by Lemma 3.3 in \cite{BischoffKawahigashiLongoRehren:2015}, see also Lemma 2.13 in \cite{CarpiGaudioGiorgettiHillier:2023}, an irreducible $C^*$-Frobenius algebra in $C$ is automatically a (standard) Q-system.
\end{remark}

Below, starting from Q-systems and $C^*$-Frobenius algebras in an abstract $C^*$-tensor category $C$, we provide positive solutions $T\in \Hom_C(X\otimes X, X\otimes X)$ of the crossing symmetry equation $\CatCross_X(T) = T^*$ (Definition \ref{def:CatCrossingSymmet}) and of the (quantum) Yang--Baxter equation $T_1 T_2 T_1 = T_2 T_1 T_2$. 

\begin{example}
	Let $(X,m,\iota)$ be a $C^*$-Frobenius algebra in $C$. A trivial family of solutions of crossing symmetry and Yang--Baxter is given by $T := (\iota \otimes \iota) (\iota^* \otimes \iota^*)$, as one can easily check. Note that $T = T^*\geq 0$ and ($= T^2$ if $\iota$ is normalized).
\end{example}

A more interesting family of solutions comes as follows. Let $(X,m,\iota)$ be a $C^*$-Frobenius algebra in $C$. Let $T := m^*m$. Then $T = T^* \geq 0$. If $(X,m,\iota)$ is also a Q-system, then $T^2 = d T$ and $\|T\| = d$, where $d>0$ is the constant entering in the definition of specialness. In particular, $d^{-1} T$ is a self-adjoint projection, cf.\ Remark \ref{rmk:Jonesproj}.
Examples of such $T = m^*m$ coming from Q-systems have been implicitly considered in Example 2.12 in \cite{Yang:2023}, in the case $C=\Hilbf$ and $X=M_n(\Cl)$, the algebra of $n\times n$ complex matrices, cf.\ Corollary \ref{cor:twistedAWalgs} and the examples thereafter.

\begin{proposition}\label{prop:TsolvesCross}
	Let $(X,m,\iota)$ be a $C^*$-Frobenius algebra in $C$.
	Then $T:=m^*m$ is crossing symmetric in the sense of Definition \ref{def:CatCrossingSymmet}, namely it fulfills
	\begin{align}
		\CatCross_X(T) = T^* = T.
	\end{align}
\end{proposition}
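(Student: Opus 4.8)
The statement has two parts: the self-adjointness $T^* = T$ and the crossing symmetry $\CatCross_X(T) = T$. The first is immediate, since $T^* = (m^*m)^* = m^*m = T$. For the second, the plan is to substitute the explicit evaluation and coevaluation $\ev_X = \iota^* m$ and $\coev_X = m^*\iota$ into Definition \ref{def:CatCross} and then simplify the resulting composite using only the $C^*$-Frobenius property and (co)unitality. Neither specialness nor normalization will be needed, so the argument works for an arbitrary $C^*$-Frobenius algebra, as claimed. It is cleanest to think diagrammatically: $T = m^*m$ is the ``bowtie'' that multiplies two strands and then comultiplies, and $\CatCross_X$ rotates it by $90^\circ$ using the Frobenius cup $\coev_X$ and cap $\ev_X$; the claim is simply that this rotation sends the bowtie to itself.

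The engine of the proof is the snake-type identity
\begin{align}
(1_X \otimes \ev_X)(m^* \otimes 1_X) = m,
\end{align}
which I would prove by writing $\ev_X = \iota^* m$, pulling the counit out, applying the Frobenius relation $(1_X \otimes m)(m^* \otimes 1_X) = m^*m$, and finishing with the counit law $(1_X \otimes \iota^*)m^* = 1_X$ (the adjoint of unitality $m(1_X \otimes \iota) = 1_X$):
\begin{align}
(1_X \otimes \iota^* m)(m^* \otimes 1_X) = (1_X \otimes \iota^*)(m^*m) = \bigl[(1_X \otimes \iota^*)m^*\bigr]\,m = m.
\end{align}
The adjoint statement $(1_X \otimes m)(\coev_X \otimes 1_X) = m^*$ then follows from $\coev_X = \ev_X^*$, or directly from $(1_X \otimes m)(m^*\iota \otimes 1_X) = m^*m(\iota \otimes 1_X) = m^*$.

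To assemble, I would start from
\begin{align}
\CatCross_X(m^*m) = (1_{X\otimes X} \otimes \ev_X)(1_X \otimes m^*m \otimes 1_X)(\coev_X \otimes 1_{X\otimes X})
\end{align}
and split $1_X \otimes m^*m \otimes 1_X = (1_X \otimes m^* \otimes 1_X)(1_X \otimes m \otimes 1_X)$. Folding the lower multiplication into the coevaluation by the adjoint lemma gives $(1_X \otimes m \otimes 1_X)(\coev_X \otimes 1_{X\otimes X}) = m^* \otimes 1_X$, and coassociativity combines the two remaining comultiplications into the iterated coproduct, leaving
\begin{align}
\CatCross_X(m^*m) = (1_{X\otimes X}\otimes \ev_X)(m^* \otimes 1_X \otimes 1_X)(m^* \otimes 1_X).
\end{align}
Since the outer $m^*$ and $\ev_X$ act on disjoint tensor legs they commute, so the left factor equals $m^*(1_X \otimes \ev_X)$, and one last application of the key lemma yields $m^*(1_X \otimes \ev_X)(m^* \otimes 1_X) = m^*m$, as desired.

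The main obstacle here is not conceptual but bookkeeping: one must keep precise track of which tensor legs each occurrence of $m$, $m^*$, $\ev_X$, $\coev_X$ acts on while performing the Frobenius and (co)unit slides. The diagrammatic picture makes the $90^\circ$-rotational symmetry of the bowtie $m^*m$ manifest and essentially trivialises the verification; the only genuine content is the two snake relations above, each a one-line consequence of the Frobenius property together with (co)unitality.
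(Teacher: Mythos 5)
Your proposal is correct and takes essentially the same route as the paper: substitute $\ev_X=\iota^*m$ and $\coev_X=m^*\iota$, split $T=m^*m$, and simplify using only the $C^*$-Frobenius property and (co)unitality (so indeed neither specialness nor normalization is needed), the only difference being organizational — you package the moves into the snake identity $(1_X\otimes\ev_X)(m^*\otimes 1_X)=m$ plus one use of coassociativity, whereas the paper runs the same Frobenius and (co)unit steps as a single chain of equalities. One small inaccuracy, harmless because you also give a correct direct proof: $(1_X\otimes m)(\coev_X\otimes 1_X)=m^*$ is not the adjoint of the snake identity (its adjoint is the mirror identity $(m\otimes 1_X)(1_X\otimes\coev_X)=m^*$); both hold, but the one you use needs the direct Frobenius-plus-unitality argument you supply.
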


\begin{proof}
	Compute
	\begin{align}
		(1_{X\otimes X} \otimes \ev_X)& (1_X \otimes T \otimes 1_X) (\coev_X \otimes 1_{X\otimes X}) \\
		&= (1_{X\otimes X} \otimes (\iota^* m)) (1_X \otimes (m^*m) \otimes 1_X) ((m^* \iota) \otimes 1_{X\otimes X}) \\
		&= (1_{X\otimes X} \otimes (\iota^* m)) (1_X \otimes m^* \otimes 1_X) (1_X \otimes m \otimes 1_X) ((m^* \iota) \otimes 1_{X\otimes X}) \\
		&= (1_{X} \otimes 1_{X} \otimes \iota^*) (1_{X} \otimes 1_{X} \otimes m) (1_X \otimes m^* \otimes 1_X) \\
		& \hspace{3.7cm}(1_X \otimes m \otimes 1_X) (m^* \otimes 1_{X} \otimes 1_{X}) (\iota \otimes 1_{X} \otimes 1_{X}) \\
		&= (1_{X} \otimes 1_{X} \otimes \iota^*) (1_{X} \otimes (m^*m)) ((m^*m) \otimes 1_X) (\iota \otimes 1_{X} \otimes 1_{X}) \\
		&= (1_{X} \otimes ((1_{X} \otimes \iota^*)m^*m)) ((m^*m (\iota \otimes 1_{X})) \otimes 1_X) \\
		&= (1_{X} \otimes m) (m^* \otimes 1_X) \\
		&= m^*m = T.
	\end{align}
\end{proof}

\begin{proposition}\label{prop:TsolvesYB}
	Let $(X,m,\iota)$ be a Q-system in $C$.
	Then $T := m^*m$ fulfills the categorical (quantum) Yang--Baxter equation, $T_1 T_2 T_1 = T_2 T_1 T_2$, namely
	\begin{align}
		(T \otimes 1_X) (1_X \otimes T) (T \otimes 1_X) = (1_X \otimes T) (T \otimes 1_X) (1_X \otimes T).
	\end{align}
\end{proposition}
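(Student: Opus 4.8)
The plan is to show that the two sides of the categorical Yang--Baxter equation coincide by identifying both with one and the same manifestly symmetric morphism, namely $d\,M^*M$, where $M := m(m\otimes 1_X) = m(1_X\otimes m) : X\otimes X\otimes X \to X$ is the triple multiplication (unambiguous by associativity) and $M^* = (m^*\otimes 1_X)m^* = (1_X\otimes m^*)m^*$ is its adjoint (coassociativity). Writing $T_1 := T\otimes 1_X$ and $T_2 := 1_X\otimes T$ with $T = m^*m$, it then suffices to prove $T_1 T_2 T_1 = d\,M^*M$ together with the mirror-symmetric identity $T_2 T_1 T_2 = d\,M^*M$.

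The key technical ingredient I would first isolate is a \emph{Frobenius intertwining relation} between the multiplication and $T$, namely $(m\otimes 1_X)(1_X\otimes T) = T(m\otimes 1_X)$. This follows by expanding $T = m^*m = (m\otimes 1_X)(1_X\otimes m^*)$ (the $C^*$-Frobenius property as stated in Definition \ref{def:Qsys}) and then applying associativity; notably, specialness is \emph{not} needed here. Since $T = m^*m$ is self-adjoint, taking adjoints yields the equivalent form $(1_X\otimes T)(m^*\otimes 1_X) = (m^*\otimes 1_X)T$, and the left--right mirror images $(1_X\otimes m)(T\otimes 1_X) = T(1_X\otimes m)$ and $(T\otimes 1_X)(1_X\otimes m^*) = (1_X\otimes m^*)T$ hold by the same argument using the other form of the Frobenius property.

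With these relations the computation is short. Factor the rightmost $T_1 = (m^*\otimes 1_X)(m\otimes 1_X)$ and push $(m^*\otimes 1_X)$ to the left through $T_2$ by the adjoint intertwiner, obtaining $T_1 T_2 T_1 = (T\otimes 1_X)(m^*\otimes 1_X)\,T\,(m\otimes 1_X)$. Here specialness enters exactly once: from $mm^* = d\,1_X$ one gets $Tm^* = m^*(mm^*) = d\,m^*$, so that $(T\otimes 1_X)(m^*\otimes 1_X) = d\,(m^*\otimes 1_X)$. Hence $T_1 T_2 T_1 = d\,(m^*\otimes 1_X)\,m^*m\,(m\otimes 1_X) = d\,M^*M$, after recognizing $(m^*\otimes 1_X)m^* = M^*$ (coassociativity) and $m(m\otimes 1_X) = M$ (associativity). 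Running the identical argument with the two tensor legs exchanged — factoring $T_2 = (1_X\otimes m^*)(1_X\otimes m)$, using the mirror relations and $Tm^* = d\,m^*$ again — gives $T_2 T_1 T_2 = d\,(1_X\otimes m^*)\,m^*m\,(1_X\otimes m) = d\,M^*M$, so the two sides agree.

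I do not expect a genuine obstacle here: the argument is entirely the interplay of associativity, coassociativity, the Frobenius property, and specialness, and the only place where care is required is bookkeeping the positions of the tensor legs and selecting the correct one of the two Frobenius forms at each step. It is worth emphasizing that specialness — equivalently, the Q-system hypothesis rather than merely a $C^*$-Frobenius algebra — is used precisely in the identity $Tm^* = d\,m^*$; this is the step that would fail for a general $C^*$-Frobenius algebra, which is consistent with the stated hypotheses of the proposition (in contrast to Proposition \ref{prop:TsolvesCross}, where the Frobenius property alone suffices).
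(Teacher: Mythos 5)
Your proof is correct, but it takes a different route from the paper's. The paper first isolates a commutation lemma, $(T\otimes 1_X)(1_X\otimes T)=(1_X\otimes T)(T\otimes 1_X)$, proved using only the $C^*$-Frobenius property and (co)associativity, and then derives Yang--Baxter in one line from specialness in the form $T^2=dT$: both sides collapse to $d\,(1_X\otimes T)(T\otimes 1_X)$. You instead prove intertwining relations such as $(m\otimes 1_X)(1_X\otimes T)=T(m\otimes 1_X)$ (again Frobenius plus associativity only), use specialness once in the form $Tm^*=dm^*$, and identify both sides of Yang--Baxter with the manifestly symmetric morphism $d\,M^*M$, where $M=m(m\otimes 1_X)=m(1_X\otimes m)$ is the triple multiplication. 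The two arguments are close relatives: your intertwining relations immediately imply the paper's commutation lemma, since $(T\otimes 1_X)(1_X\otimes T)=(m^*\otimes 1_X)T(m\otimes 1_X)=M^*M=(1_X\otimes m^*)T(1_X\otimes m)=(1_X\otimes T)(T\otimes 1_X)$, and conversely the paper's common value $d\,(1_X\otimes T)(T\otimes 1_X)$ equals your $d\,M^*M$ by one more application of Frobenius. What the paper's route buys is modularity: its commutation lemma is reused later (in the remark on Jones projections, where it is observed that $E_X$ fails exactly this lemma). What your route buys is a concrete closed form for the common value of both sides, exhibiting it as $d$ times the positive operator $M^*M$ built from the triple product. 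Your closing observation --- that specialness enters exactly once and is genuinely needed, whereas the Frobenius property alone suffices for crossing symmetry --- matches the paper's division of hypotheses between Propositions \ref{prop:TsolvesCross} and \ref{prop:TsolvesYB}.
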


We first show the following lemma.

\begin{lemma}\label{lem:TsolvesGL}
	Let $T$ be as above (in fact, we only need the $C^*$-Frobenius property, associativity and co-associativity of $(X,m,\iota)$). Then
	\begin{align}
		(T \otimes 1_X) (1_X \otimes T) = (1_X \otimes T) (T \otimes 1_X).
	\end{align}
\end{lemma}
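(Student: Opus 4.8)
The plan is to expand $T=m^*m$ on both sides of the claimed identity and reduce everything to manipulations of the single morphism $m$ and its adjoint $m^*$, using only associativity $m(m\otimes 1_X)=m(1_X\otimes m)$, its adjoint co-associativity $(m^*\otimes 1_X)m^*=(1_X\otimes m^*)m^*$, and the two $C^*$-Frobenius identities $(m\otimes 1_X)(1_X\otimes m^*)=m^*m=(1_X\otimes m)(m^*\otimes 1_X)$. None of these uses specialness or normalization, consistent with the hypotheses of the lemma.

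First I would rewrite the left-hand side using $(m^*m)\otimes 1_X=(m^*\otimes 1_X)(m\otimes 1_X)$ and $1_X\otimes(m^*m)=(1_X\otimes m^*)(1_X\otimes m)$, obtaining
\[
(T\otimes 1_X)(1_X\otimes T)=(m^*\otimes 1_X)(m\otimes 1_X)(1_X\otimes m^*)(1_X\otimes m).
\]
The two innermost factors $(m\otimes 1_X)(1_X\otimes m^*)$ are precisely the left $C^*$-Frobenius expression, so they collapse to $m^*m$, leaving $(m^*\otimes 1_X)(m^*m)(1_X\otimes m)$. Next I would push the two outer factors through: on the left, $(m^*\otimes 1_X)m^*$ is the co-associativity expression and equals $(1_X\otimes m^*)m^*$, while on the right, $m(1_X\otimes m)$ is the associativity expression and equals $m(m\otimes 1_X)$. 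Substituting both turns the expression into
\[
(1_X\otimes m^*)(m^*m)(m\otimes 1_X).
\]
Finally I would reverse the first step using the \emph{other} Frobenius identity $(1_X\otimes m)(m^*\otimes 1_X)=m^*m$, which rewrites $(1_X\otimes m^*)(m^*m)(m\otimes 1_X)$ as $(1_X\otimes m^*)(1_X\otimes m)(m^*\otimes 1_X)(m\otimes 1_X)=(1_X\otimes T)(T\otimes 1_X)$, exactly the right-hand side.

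I do not expect a genuine obstacle here, as the computation is entirely mechanical once the morphisms are tracked on the correct tensor legs of $X\otimes X\otimes X$. The only care required is bookkeeping: after the middle Frobenius contraction, one must apply associativity and co-associativity to the \emph{outer} legs (the first and last factors), and not confuse them with the Frobenius sliding used in the middle. Rendering each of the four steps as a string diagram would make the manipulation visually transparent and is arguably the cleanest presentation, but the short algebraic chain above already suffices.
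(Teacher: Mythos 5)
Your proposal is correct and follows essentially the same chain of identities as the paper's proof: expand $T=m^*m$, collapse the middle pair $(m\otimes 1_X)(1_X\otimes m^*)$ via the Frobenius property, apply co-associativity and associativity to the outer factors, and re-expand with the other Frobenius identity. The bookkeeping of which identity applies at which step matches the paper exactly, so there is nothing to add.
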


\begin{proof}
	\begin{align}
		(T \otimes 1_X) (1_X \otimes T) &= ((m^*m) \otimes 1_X) (1_X \otimes (m^*m))\\
		&= (m^* \otimes 1_X) (m \otimes 1_X) (1_X \otimes m^*) (1_X \otimes m) \\
		& = (m^* \otimes 1_X) m^* m (1_X \otimes m) \\
		& = (1_X \otimes m^*) m^* m (m \otimes 1_X) \\
		& = (1_X \otimes m^*) (1_X \otimes m) (m^* \otimes 1_X) (m \otimes 1_X) \\
		& = (1_X \otimes (m^*m)) ((m^*m) \otimes 1_X) = (1_X \otimes T) (T \otimes 1_X).
	\end{align}
\end{proof}

From this, the Yang--Baxter equation easily follows.

\begin{proof}(of Proposition \ref{prop:TsolvesYB}).
	By Lemma \ref{lem:TsolvesGL}, we have
	\begin{align}
		(T \otimes 1_X) (1_X \otimes T) (T \otimes 1_X) = (1_X \otimes T) (T^2 \otimes 1_X) = d (1_X \otimes T) (T \otimes 1_X),
	\end{align}
	and
	\begin{align}
		(1_X \otimes T) (T \otimes 1_X) (1_X \otimes T) = (T \otimes 1_X) (1_X \otimes T^2) = d (T \otimes 1_X) (1_X \otimes T).
	\end{align}
	The two expressions are the same again by Lemma \ref{lem:TsolvesGL}.
\end{proof}

\begin{remark}\label{rmk:Jonesproj}
	One may wonder if, given a Q-system $(X,m,\iota)$ in a $C^*$-tensor category $C$, the \emph{Jones projection} $E_X := d^{-1}\coev_X \ev_X = d^{-1} m^* \iota \iota^* m$ also provides a solution to the equations in the previous Propositions \ref{prop:TsolvesCross} and \ref{prop:TsolvesYB}.
	The projections $E_X = E_X^* = E_X^2$ and $1_{X \otimes X}$ can be combined in order to give solutions of the Yang--Baxter equation. Indeed, $1_X \otimes E_X$ and $E_X \otimes 1_X$ fulfill the stronger Temperley--Lieb algebra relations \cite{Jones:1985}. However, $E_X$ fulfills the crossing symmetry equation if and only if $E_X = 1_{X\otimes X}$, namely, if and only if $(X=\id, m=\iota=1_{\id}, d=1)$ is the trivial Q-system in $C$.
	Indeed, $\CatCross_X(1_{X\otimes X}) = \coev_X \ev_X = d E_X$, an equation we have already encountered in \eqref{eq:CrossId} for $C = \Hilbf$, $X=\Hil$ of dimension $N$, and $d=N$.
\end{remark}

\begin{remark}
	In the case of concrete Frobenius algebras, where the (not necessarily $C^*$) tensor category $C$ is vector spaces or modules over a commutative ring, the solutions $T=m^*m$ of Yang--Baxter provided in Proposition \ref{prop:TsolvesYB} are not new. They already appeared in their concrete form in \cite{BeidarFongStolin:1997,CaenepeelBogdanMilitaru:2000}, inspired by \cite{Drinfeld:1983}. In the context of subfactors, where the tensor category $C$ is either endomorphisms or bimodules of an infinite or tracial factor, cf.\ Remark \ref{rmk:CatCrossisF}, the Yang--Baxter solutions of Proposition \ref{prop:TsolvesYB} do not seem to be straightforwardly related to the Temperley--Lieb algebra relations of the subfactor associated with the input Q-system, cf.\ Remark \ref{rmk:Jonesproj} and see \cite{Jones:1985, GoodmandelaHarpeJones:1989}. For example Lemma \ref{lem:TsolvesGL} does not hold for the Jones projection $E_X$, as the left and right wave diagrams $(E_X \otimes 1_X) (1_X \otimes E_X)$ and $(1_X \otimes E_X) (E_X \otimes 1_X)$ are not equal.
\end{remark}

\begin{proposition}\label{prop:TisDeltaitinvariant}
	Let $(X,m,\iota)$ be a $C^*$-Frobenius algebra (in particular a real object with $\ev_X = \iota^* m = \coev_X^*$) in $C = \Hilbf$. Let $S$ be the antilinear involution defined by \eqref{eq:defSfromrealHil} with polar decomposition $S = J\Delta^{1/2}$. Then $T := m^*m$ is invariant under the modular group $\Delta^{it}$, $t\in\bR$, as in Definition \ref{def:G-invariant}. Namely
	\begin{align}
		[T,\Delta^{it}\ot \Delta^{it}]=0, \quad t\in \bR.
	\end{align}
\end{proposition}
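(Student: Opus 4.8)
The plan is to recast the categorical data as a finite-dimensional algebra and to recognise $\Delta$ as the Nakayama automorphism of its Frobenius functional. Writing $e:=\iota(1)$, $xy:=m(x\ot y)$, and $\tau(x):=\iota^* x=\langle e,x\rangle$, the defining relation $\ev_X=\iota^* m$ together with Theorem~\ref{thm:HilbfCrossing} gives $\tau(vw)=\langle Sv,w\rangle$ for all $v,w\in\Hil$. I would first reduce the statement to the single claim that $\Delta^{it}$ is an algebra automorphism, i.e.\ $\Delta^{it}m=m(\Delta^{it}\ot\Delta^{it})$. Indeed, taking adjoints of this identity (and replacing $t$ by $-t$) gives $(\Delta^{it}\ot\Delta^{it})m^*=m^*\Delta^{it}$, and hence
\[ (\Delta^{it}\ot\Delta^{it})T=(\Delta^{it}\ot\Delta^{it})m^*m=m^*\Delta^{it}m=m^*m(\Delta^{it}\ot\Delta^{it})=T(\Delta^{it}\ot\Delta^{it}), \]
which is precisely $[T,\Delta^{it}\ot\Delta^{it}]=0$.

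The heart of the argument is thus the identification of $\Delta$. First I would compute the opposite pairing: from the antilinear adjoint relation $\langle v,Sw\rangle=\langle w,S^*v\rangle$ one gets $\tau(wv)=\langle Sw,v\rangle=\overline{\langle v,Sw\rangle}=\langle S^*v,w\rangle$. Defining the Nakayama automorphism $\nu$ by $\tau(vw)=\tau(w\,\nu(v))$ and comparing $\tau(vw)=\langle Sv,w\rangle$ with $\tau(w\,\nu(v))=\langle S^*\nu(v),w\rangle$ forces $Sv=S^*\nu(v)$, so that $\nu=(S^*)^{-1}S$. Using the polar decompositions $S=J\Delta^{1/2}$, $S^*=J\Delta^{-1/2}$ and $J^2=1$, this collapses to $\nu=\Delta$. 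That $\nu=\Delta$ is then multiplicative, $\Delta m=m(\Delta\ot\Delta)$, follows from the standard Nakayama computation: for every $b$, associativity yields $\tau(b\,\nu(a_1a_2))=\tau(a_1a_2b)=\tau\big(b\,\nu(a_1)\nu(a_2)\big)$, and nondegeneracy of $(x,y)\mapsto\tau(xy)$ — which holds since $S$, hence $\Delta$, is invertible — gives $\nu(a_1a_2)=\nu(a_1)\nu(a_2)$.

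It remains to promote the single operator relation $\Delta m=m(\Delta\ot\Delta)$ to the whole modular group. Since $\Delta>0$, I would diagonalise simultaneously, letting $P_\lambda$ and $Q_\mu$ denote the spectral projections of $\Delta$ and of $\Delta\ot\Delta=\exp(\log\Delta\ot1+1\ot\log\Delta)$ at eigenvalues $e^\lambda$ and $e^\mu$. The relation $\Delta m=m(\Delta\ot\Delta)$ then reads $(e^\lambda-e^\mu)P_\lambda mQ_\mu=0$, so $P_\lambda mQ_\mu=0$ unless $\lambda=\mu$. This says exactly that $m$ intertwines the self-adjoint generators, $(\log\Delta)m=m(\log\Delta\ot1+1\ot\log\Delta)$, whence $\Delta^{it}m=m(\Delta^{it}\ot\Delta^{it})$ for every $t\in\bR$, closing the argument.

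The step I expect to be the main obstacle is this last promotion from $\Delta$ to $\Delta^{it}$: in general $e^{A}$ intertwining does not force $A$ to intertwine, and the implication is valid here only because $\log\Delta$ is self-adjoint with real spectrum, so that $e^\lambda=e^\mu$ genuinely entails $\lambda=\mu$ and no $2\pi i$-periodicity can spoil the matching. The conceptual device that trivialises everything else is the recognition of $\Delta$ as the Nakayama automorphism of $\tau$; manipulating $m^*m$ and $\Delta^{it}$ directly, without this observation, would be considerably more awkward.
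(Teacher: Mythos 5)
Your proposal is correct, and it reaches the paper's pivotal identity $\Delta m = m(\Delta\otimes\Delta)$ by a genuinely different route. The paper first proves Lemma~\ref{lem:Sisalgebrainvol}: $Sm = mF(S\otimes S)$ and $S^*m = mF(S^*\otimes S^*)$, i.e.\ $S$ and $S^*$ are algebra \emph{anti}-automorphisms; the proof of that lemma uses the $C^*$-Frobenius property explicitly, through the formula $m^* = (1_X\otimes m)(\coev_X\otimes 1_X)$ and the basis expansion $\coev_X = \sum_i e_i\otimes Se_i$. Composing the two anti-automorphisms then gives multiplicativity of $\Delta = S^*S$, and the commutation $[T,\Delta^{it}\otimes\Delta^{it}]=0$ "follows by functional calculus." You instead recognize $\Delta = (S^*)^{-1}S$ as the Nakayama automorphism of the Frobenius functional $\tau=\iota^*$ and prove its multiplicativity by the classical argument, using only associativity of $m$ and nondegeneracy of the pairing $(x,y)\mapsto\tau(xy)$ (equivalently, invertibility of $S$); this bypasses Lemma~\ref{lem:Sisalgebrainvol} and the Frobenius-property computation entirely. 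What each buys: the paper's lemma is a stronger, reusable statement (it is also the backbone of Proposition~\ref{prop:QsysCstar}, where $x^\dagger := Sx$ equips $X$ with its $C^*$-structure), whereas your route is leaner for this proposition alone and isolates exactly which hypotheses drive the result — the $C^*$-Frobenius property enters only to ensure that $S$ defined by \eqref{eq:defSfromrealHil} is an involution, while multiplicativity of $\Delta$ needs nothing beyond associativity and nondegeneracy. Finally, your spectral-projection argument promoting $\Delta m = m(\Delta\otimes\Delta)$ to $\Delta^{it} m = m(\Delta^{it}\otimes\Delta^{it})$ is precisely the content the paper compresses into "functional calculus," and your observation that self-adjointness of $\log\Delta$ (no $2\pi i$-periodicity) is what legitimizes this step is accurate and worth making explicit.
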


We first show a lemma, whose first statement is already contained in Lemma 2.2 in \cite{NeshveyevYamashita:2018}, cf.\ the proof of Proposition 3.4 in \cite{AranoDeCommer:2019}. See also Lemma 4.1 in \cite{Vicary:2011}.

\begin{lemma}\label{lem:Sisalgebrainvol}
	Let $(X,m,\iota)$ be a $C^*$-Frobenius algebra and $C=\Hilbf$. Consider the representation by left multiplication of the algebra $X$ on itself (as a Hilbert space), namely $L_x y := m(x\otimes y)$, $x,y \in X$. Consider also the representation by right multiplication of the opposite algebra $X$ (with opposite multiplication $m^\mathrm{op} := m F$, where $F$ is the tensor flip), namely, $R_x y := m(y\otimes x)$, $x,y \in X$.
	
	Then $(L_x)^* = L_{Sx}$ and $(R_x)^* = R_{S^*x}$, or equivalently
	\begin{align}
		\langle m( x \otimes y), z \rangle = \langle y, m(Sx \otimes z) \rangle, \quad x,y,z \in X
	\end{align}
	and
	\begin{align}
		\langle m( y \otimes x), z \rangle = \langle y, m(z \otimes S^*x) \rangle, \quad x,y,z \in X.
	\end{align}
	
	In particular, $S$ and $S^*$ are algebra involutions, namely
	\begin{align}
		S m = m F (S\otimes S) \quad \mathrm{and} \quad S^* m = m F (S^*\otimes S^*).
	\end{align}
\end{lemma}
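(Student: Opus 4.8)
The plan is to reduce both ``algebra involution'' identities to the two adjoint formulas $(L_x)^*=L_{Sx}$ and $(R_x)^*=R_{S^*x}$, and to obtain the latter by turning the comultiplication $m^*$ into a ``bent'' multiplication via the $C^*$-Frobenius property. The one categorical input I would use is the identification of the coevaluation vector with $\xi_S$: since for the real object $\Hil=X$ of a $C^*$-Frobenius algebra one has $\coev_\Hil=\ev_\Hil^*=m^*\iota$ and $\ev_\Hil(v\otimes w)=\langle Sv,w\rangle$ (Definition \ref{def:CstarFrobAlg} together with Theorem \ref{thm:HilbfCrossing} a) and \eqref{eq:defSfromrealHil}, \eqref{eq:coevS}), the vector $m^*\iota(1)$ equals $\xi_S=\sum_n e_n\otimes Se_n$ for any orthonormal basis $\{e_n\}_n$ of $\Hil$, cf.\ \eqref{def:xiS}.

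First I would extract from the $C^*$-Frobenius identity $(m\otimes 1_X)(1_X\otimes m^*)=m^*m=(1_X\otimes m)(m^*\otimes 1_X)$, upon evaluating the two outer expressions on $x\otimes e$ and on $e\otimes x$ with $e:=\iota(1)$ and using unitality $m(x\otimes e)=x=m(e\otimes x)$, the ``leg-bending'' formulas
\begin{align*}
	m^*x=\sum_i m(x\otimes u_i)\otimes v_i=\sum_i u_i\otimes m(v_i\otimes x),\qquad x\in\Hil,
\end{align*}
where $\xi_S=\sum_i u_i\otimes v_i$. This is the crucial step: it expresses the (a priori unknown) comultiplication purely through $m$ and the known vector $\xi_S$.

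Next I would prove the adjoint relations. Writing $\langle m(x\otimes y),z\rangle=\langle x\otimes y,m^*z\rangle$ and inserting the second bent formula gives $\langle m(x\otimes y),z\rangle=\sum_i\langle x,u_i\rangle\langle y,m(v_i\otimes z)\rangle=\langle y,m((\textstyle\sum_i\langle x,u_i\rangle v_i)\otimes z)\rangle$, and since $\sum_i\langle x,u_i\rangle v_i=\sum_n\langle x,e_n\rangle Se_n=Sx$ by antilinearity of $S$, this is precisely $\langle y,m(Sx\otimes z)\rangle$, i.e.\ $(L_x)^*=L_{Sx}$. The symmetric computation with the first bent formula, using the antilinear-adjoint identity $\langle S\eta,\zeta\rangle=\langle S^*\zeta,\eta\rangle$ to identify $\sum_i\langle x,v_i\rangle u_i=\sum_n\langle x,Se_n\rangle e_n=S^*x$, yields $\langle m(y\otimes x),z\rangle=\langle y,m(z\otimes S^*x)\rangle$, i.e.\ $(R_x)^*=R_{S^*x}$.

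Finally I would deduce the involution identities from these two relations together with $S^2=(S^*)^2=1$ (the object being real, $S\in\CS(\Hil)$ is an involution by Theorem \ref{thm:HilbfCrossing} a). For arbitrary $w$, the antilinear-adjoint identity gives $\langle Sm(x\otimes y),w\rangle=\langle S^*w,m(x\otimes y)\rangle$; applying $(L_x)^*=L_{Sx}$ and then $(R_x)^*=R_{S^*x}$ and collapsing with $S^2=(S^*)^2=1$ produces $\langle Sm(x\otimes y),w\rangle=\langle Sx,m(y\otimes w)\rangle=\langle m(Sy\otimes Sx),w\rangle$, whence $Sm=mF(S\otimes S)$. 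The mirror computation, starting from $\langle S^*m(x\otimes y),w\rangle=\langle Sw,m(x\otimes y)\rangle$ and applying $(R_x)^*=R_{S^*x}$ first, gives $S^*m=mF(S^*\otimes S^*)$. I expect the only genuine subtlety to be the bookkeeping of the antilinear conventions: one must track in which tensor slot each of $S,S^*$ lands and ensure that the cancellations produce the harmless $S^2=(S^*)^2=1$ rather than the nontrivial $S^*S=\Delta$; granting this, everything reduces to the Frobenius property, unitality, and the single identification $\coev_\Hil(1)=\xi_S$.
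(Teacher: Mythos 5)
Your proposal is correct and follows essentially the same route as the paper: both rest on the Frobenius ``leg-bending'' identity $m^* = (1_X \otimes m)(\coev_X \otimes 1_X)$ together with the identification $\coev_X(1)=m^*\iota(1)=\xi_S=\sum_n e_n\otimes Se_n$, from which the adjunction formulas $(L_x)^*=L_{Sx}$, $(R_x)^*=R_{S^*x}$ and then the involution identities follow. The only differences are that you derive the bent formulas explicitly by evaluating the Frobenius relation at the unit (the paper asserts this equivalence directly) and you write out the final deduction of $Sm=mF(S\otimes S)$, which the paper leaves as ``follows''; both are harmless elaborations.
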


\begin{proof}
	First, observe that the $C^*$-Frobenius relation implies (and is, in fact, equivalent to) $m^* = (1_X \otimes m) (\coev_X \otimes 1_X)$ where $\coev_X = m^* \iota$ (in an arbitrary $C^*$-tensor category $C$). In $C = \Hilbf$, recall also the computation after \eqref{eq:defSfromrealHil} for $\coev_X = \sum_i e_i \otimes S e_i$, where $\{e_i\}_i$ is an arbitrary orthonormal basis of $X$. Then
	\begin{align}
		\langle y, m(Sx \otimes z) \rangle &= \langle m^*(y), Sx \otimes z \rangle \\
		&= \langle (1_X \otimes m) (\coev_X \otimes\, y), Sx \otimes z \rangle \\
		&= \sum_i \langle (1_X \otimes m) (e_i \otimes S e_i \otimes\, y), Sx \otimes z \rangle \\
		&= \sum_i \langle e_i, Sx \rangle \langle m(S e_i \otimes\, y), z \rangle \\
		&= \sum_i \langle m(S (\langle e_i, Sx \rangle e_i) \otimes\, y), z \rangle \\
		&= \langle m(S^2 x \otimes\, y), z \rangle \\
		&= \langle m(x \otimes\, y), z \rangle
	\end{align}
	because $\langle \cdot , \cdot \rangle$ is antilinear on the left and $S$ is an antilinear involution. The first algebra involution property $S(m(x\otimes y)) = m(Sy \otimes Sx)$, $x,y \in X$, follows. The second adjunction relation $\langle m( y \otimes x), z \rangle = \langle y, m(z \otimes S^*x) \rangle$ together with $S^*(m(x\otimes y)) = m(S^*y \otimes S^*x)$, $x,y \in X$, can be proven similarly.
\end{proof}

\begin{proof}(of Proposition \ref{prop:TisDeltaitinvariant}).
	By Lemma \ref{lem:Sisalgebrainvol}, and by direct computation, $T (\Delta \otimes \Delta) = (\Delta \otimes \Delta) T$ where $T = m^*m$ and $\Delta = S^*S$ follows.
	The desired equality $T (\Delta^{it}\ot \Delta^{it}) = (\Delta^{it}\ot \Delta^{it}) T$, $t\in \bR$, follows too by functional calculus.
\end{proof}

With the last three Propositions \ref{prop:TsolvesCross}, \ref{prop:TsolvesYB}, \ref{prop:TisDeltaitinvariant} together with the concrete form $\Cross_S$ (Definition \ref{def:Scrossable}) of the categorical crossing map $\CatCross_X$ (Definition \ref{def:CatCross}) for $C=\Hilbf$ given by Theorem \ref{thm:HilbfCrossing}, and the previous Proposition \ref{prop:KMS-likeCrossing} on crossing symmetry and KMS property, we get the following.

\begin{corollary}\label{cor:twistedAWalgs}
	Every concrete Q-system $(X,m,\iota)$ in $C=\Hilbf$ (see Example \ref{ex:CstarFrobAlginHilbf} and Proposition \ref{prop:QsysCstar} below) produces a positive strict twist $T := m^*m : X\otimes X \to X\otimes X$ which is compatible, crossing symmetric and braided in the language of \cite{CorreaDaSilvaLechner:2023}, acting on the finite-dimensional space $X\otimes X$. In particular, each such $T$ gives an examples of a $T$-twisted Araki--Woods von Neumann algebra realized on $T$-twisted Fock space with (cyclic and) separating vacuum vector, see e.g.\ Corollary 3.23 in \cite{CorreaDaSilvaLechner:2023}, where the real standard subspace $H \subset X$ is determined by the real structure $\ev = \iota^* m = \coev^*$ and by the corresponding antilinear involution $S\in\CS(X)$.
\end{corollary}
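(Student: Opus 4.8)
The plan is to treat this corollary as an assembly result: the three preceding propositions supply exactly the three structural properties of $T:=m^*m$ that the construction of \cite{CorreaDaSilvaLechner:2023} requires, once the categorical crossing map is identified with the Hilbert space one. First I would fix the data. Applying Theorem~\ref{thm:HilbfCrossing}~a) to the real structure $\ev_X=\iota^*m$, $\coev_X=\ev_X^*$ produces a unique antilinear involution $S\in\CS(X)$ determined by $\ev_X(v\otimes w)=\langle Sv,w\rangle$; I then set $H:=H_S=\ker(S-1)$, the real standard subspace from Remark~\ref{remark:modulartheory}, and record its polar decomposition $S=J\Delta^{1/2}$. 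Since $X$ is finite-dimensional, $T=m^*m\in\CB(X\otimes X)$ is automatically bounded, and manifestly $T=T^*\geq0$, so $T$ is a positive twist acting on the finite-dimensional space $X\otimes X$.

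Next I would verify the three conditions that \cite{CorreaDaSilvaLechner:2023} asks of a pair $(T,H)$ for its twisted Fock space construction to yield a von Neumann algebra with cyclic and separating vacuum. For \emph{crossing symmetry}, Theorem~\ref{thm:HilbfCrossing}~b) identifies $\CatCross_X$ with the Hilbert space crossing map $\Cross_S$, so Proposition~\ref{prop:TsolvesCross} ($\CatCross_X(T)=T^*$) translates verbatim into $\Cross_{S_H}(T)=T^*$, i.e.\ $S_H$-crossing symmetry in the sense of Definition~\ref{def:Scrossable}. For \emph{braiding}, Proposition~\ref{prop:TsolvesYB} furnishes the categorical Yang--Baxter equation $T_1T_2T_1=T_2T_1T_2$, which in $\Hilbf$ is the ordinary Yang--Baxter equation on $X\otimes X$. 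For \emph{compatibility}, Proposition~\ref{prop:TisDeltaitinvariant} gives $[T,\Delta^{it}\otimes\Delta^{it}]=0$ for all $t\in\bR$, which is precisely commutation with the modular unitaries of $H$. With these three facts established, the existence of the associated $T$-twisted Araki--Woods von Neumann algebra with (cyclic and) separating vacuum follows by directly invoking Corollary~3.23 of \cite{CorreaDaSilvaLechner:2023}.

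The one genuinely non-mechanical step is reconciling the hypotheses of that corollary with what we have on hand; this is where I expect the main, though modest, obstacle to lie. Concretely I would confirm that the cited construction accommodates a \emph{positive} twist of norm equal to the specialness constant $d\geq d(X)$ (recall $T^2=dT$ for a Q-system, so $\|T\|=d$), as opposed to only contractive twists, possibly after the rescaling by $(\iota^*\iota)^{1/2}$ and $(mm^*)^{1/2}$ that turns an arbitrary $C^*$-Frobenius algebra into a normalized Q-system. Everything beyond this bookkeeping is a direct citation of Propositions~\ref{prop:TsolvesCross}, \ref{prop:TsolvesYB}, \ref{prop:TisDeltaitinvariant} and Theorem~\ref{thm:HilbfCrossing}, so no further computation is needed.
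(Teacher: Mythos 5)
Your proposal is correct and is essentially the paper's own argument: the paper justifies this corollary in a single sentence, assembling Propositions \ref{prop:TsolvesCross}, \ref{prop:TsolvesYB}, \ref{prop:TisDeltaitinvariant} and Theorem \ref{thm:HilbfCrossing} and then citing Corollary 3.23 of \cite{CorreaDaSilvaLechner:2023}, exactly as you do.

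Two points of comparison are worth recording. First, the paper additionally invokes Proposition \ref{prop:KMS-likeCrossing}, and for a reason your write-up glosses over: in \cite{CorreaDaSilvaLechner:2023} crossing symmetry is \emph{defined} by a KMS-type analytic continuation condition, not by the algebraic identity $\Cross_{S_H}(T)=T^*$ of Definition \ref{def:Scrossable}, so your claim that Proposition \ref{prop:TsolvesCross} applies ``verbatim'' needs Proposition \ref{prop:KMS-likeCrossing} as the bridge between the two formulations; you should cite it at that step. Second, the obstacle you flag at the end is genuine, and the paper itself is silent on it: $\|m^*m\| = d \geq d(X) = \dim X > 1$ for every nontrivial Q-system, whereas the twists of \cite{CorreaDaSilvaLechner:2023} are selfadjoint contractions, and the word ``strict'' in the statement (injectivity of the operators $P_{T,n}$ that define the twisted Fock inner products) is established by none of the cited propositions. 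Rescaling to the projection $d^{-1}m^*m$ repairs the norm bound and preserves positivity, compatibility, braiding and crossing symmetry (the crossing map is complex linear and $d^{-1}$ is real), as you suggest; but strictness of a norm-one twist is not automatic -- the flip $F$ is a norm-one, braided, crossing symmetric twist that is not strict -- so this step is more than bookkeeping. Since the paper's own proof does not address it either, your proposal is no weaker than the paper on this point; the residual gap is shared by both.
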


We conclude by providing more concrete examples of Q-systems $(X,m,\iota)$ in $C=\Hilbf$, either standard, i.e.\ with $\Delta = 1_X$, or with non-trivial $\Delta$, i.e.\ where $S \neq S^*$.
Note that if $m$ is commutative or if $\iota^*$ is tracial, then certainly $\ev_X (= \iota^* m)$ is symmetric and $\Delta$ is trivial.

For each example below we compute the operators $\Delta$ and $J$ (on $X$), and $T := m^*m$ (acting linearly on $X\otimes X$) solving the crossing symmetry equation (Proposition \ref{prop:TsolvesCross}), Yang--Baxter equation (Proposition \ref{prop:TsolvesYB}), and modular group invariance equation $T (\Delta^{it} \otimes \Delta^{it}) = (\Delta^{it} \otimes \Delta^{it}) T$, $t\in\bR$ (Proposition \ref{prop:TisDeltaitinvariant}).

\begin{example}\label{ex:QsysC(G)}
	The algebra of complex-valued functions over a finite group $G$ with (commutative) pointwise multiplication gives a Q-system $(C(G) \cong \bC G, m, \iota)$ in $C = \Rep_f^u(G)$, the category of finite-dimensional unitary representations of $G$. In fact, an irreducible standard Q-system.
	The *-structure on $\Rep_f^u(G)$ is given by the embedding into $\Hilbf$. Explicitly, as $G$-invariant scalar product on $C(G)$ we take the normalized $L^2$ scalar product $\langle \psi_1, \psi_2 \rangle := \int_G \bar \psi_1 \psi_2 dg = |G|^{-1} \sum_g \bar \psi_1(g) \psi_2(g)$.
	The algebra structure on $C(G)$ is given by
	\begin{alignat}{2}
		m &: \, C(G) \otimes C(G) \to C(G), \qquad m^* &&: \, C(G) \to C(G) \otimes C(G), \\
		& \, \psi_1 \otimes \psi_2 \mapsto \psi_1 \psi_2 &&\, \psi \mapsto |G| \sum_g \psi(g) \delta_g \otimes \delta_g \\
		\iota &: \, \bC \to C(G), \hspace{31.5mm} \iota^* &&: \, C(G) \to \bC \\
		& \, \lambda \mapsto \lambda 1 &&\, \psi \mapsto |G|^{-1} \sum_g \psi(g).
	\end{alignat}
	These linear maps are easily seen to be intertwiners of the $G$-actions by (left) translation $(_g\psi)(h) := \psi(g^{-1}h)$, hence they are morphisms in $\Rep_f^u(G)$. Also, $\iota^* \iota = 1_{\bC}$ (normalization) and $mm^* = |G| 1_{C(G)}$ (specialness) hold. Either by observing that the Q-system is irreducible (the only $G$-invariant functions are the constants), or by $d(C(G)) = |G|$, we have that $(C(G), m, \iota)$ is a standard Q-system.
	
	We also see that $\ev_{C(G)} := \iota^* m : \psi_1 \otimes \psi_2 \mapsto \langle \bar \psi_1, \psi_2 \rangle$ and $\coev_{C(G)} := m^* \iota : \lambda \mapsto \lambda \sum_g |G|^{1/2} \delta_g \otimes |G|^{1/2} \delta_g$. Namely, by identifying the conjugate Hilbert space $\overline{C(G)}$ with $C(G)$ using the antiunitary involution $J: \psi \mapsto \bar\psi$, we get as $\ev_{C(G)}$ and $\coev_{C(G)}$ the same standard solutions as in \eqref{eq:evcoevstandardinHilbf}.
	
	The associated operator $T := m^*m : C(G) \otimes C(G) \to C(G) \otimes C(G)$ acts by $\delta_g \otimes \delta_h \mapsto |G| \delta_g(h)\, \delta_g \otimes \delta_g$ on the basis $\delta_g \otimes \delta_h$.
\end{example}

\begin{example}\label{ex:QsysL1(G)}
	The convolution algebra of functions over a finite group $G$ also gives an irreducible standard Q-system $(L^1(G) = \bC G, m, \iota)$ in $C = \Hilbf_{,G}$, the category of finite-dimensional $G$-graded Hilbert spaces. Similarly to the previous example, take the non-normalized $L^2$ scalar product $\langle \psi_1, \psi_2 \rangle := \sum_g \bar \psi_1(g) \psi_2(g)$, which preserves the $G$-grading ($\delta_g$ and $\delta_h$ are orthogonal for $g\neq h$ and the scalar product is the direct sum scalar product of $|G|$ copies of $\bC$). The algebra structure is given by
	\begin{alignat}{2}
		m &: \, L^1(G) \otimes L^1(G) \to L^1(G), \qquad\quad m^* &&: \, L^1(G) \to L^1(G) \otimes L^1(G), \\
		m(\psi_1 \otimes &\psi_2)(h) := \sum_g \psi_1(g) \psi_2(g^{-1}h) &&\, m^*(\psi)(g,h) := \psi(gh) \\
		\iota &: \, \bC \to L^1(G), \hspace{38.5mm} \iota^* &&: \, L^1(G) \to \bC \\
		& \, \lambda \mapsto \lambda \delta_e &&\, \psi \mapsto \psi(e).
	\end{alignat}
	The maps are linear and preserve the $G$-grading (the convolution of $\delta_g$, $\delta_h$ is $\delta_{gh}$), hence they are morphisms in $\Hilbf_{,G}$. Normalization and irreducibility are immediate. Specialness and standardness follow by $m^*(\psi) = \sum_{g,h} \psi(h) \delta_g \otimes \delta_{g^{-1}h}$ and $mm^* = |G| 1_{L^1(G)}$ as in the previous example.
	
	By identifying $L^1(G)$ with $\overline{L^1(G)}$ using the antiunitary involution $J: \psi \mapsto J\psi$, $(J\psi)(g) := \overline{\psi(g^{-1})}$, we get again that $\ev_{L^1(G)} := \iota^* m : \psi_1 \otimes \psi_2 \mapsto \langle J\psi_1, \psi_2 \rangle$ and $\coev_{L^1(G)} := m^* \iota : \lambda \mapsto \lambda \sum_g \delta_g \otimes J\delta_g$ are the standard solutions in \eqref{eq:evcoevstandardinHilbf}.
	
	The associated operator $T := m^*m : L^1(G) \otimes L^1(G) \to L^1(G) \otimes L^1(G)$ acts by $\delta_g \otimes \delta_h \mapsto \sum_k \delta_k \otimes \delta_{k^{-1}gh}$ on the basis $\delta_g \otimes \delta_h$.
\end{example}

We now come back to $C^*$-Frobenius algebras and Q-systems $(X,m,\iota)$ in $C = \Hilbf$, as already mentioned in Example \ref{ex:CstarFrobAlginHilbf}. We first look at their general structure (see Theorem 4.6 in \cite{Vicary:2011}, Lemma 2.2 in \cite{NeshveyevYamashita:2018}, and cf.\ Proposition 3.4 in \cite{AranoDeCommer:2019}).

\begin{proposition}\label{prop:QsysCstar}
	There is a one-to-one correspondence between normalized $C^\ast$-Frobenius algebras $(X,m,\iota)$ in $\Hilbf$ and pairs $(\CA,\om)$ of finite-dimensional $C^\ast$-algebras $\CA$ and faithful states $\om$ on $\CA$. This correspondence identifies $X$ with $\CA$ as unital associative algebras, $\iota^*$ with $\om$ as functionals, and the involution $S$ defined by $\ev_X:=\iota^*m$ as in \eqref{eq:defSfromrealHil} with the Tomita operator of $(\CA,\om)$ in its GNS representation.
\end{proposition}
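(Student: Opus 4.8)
The plan is to build the correspondence explicitly in both directions and verify that the two constructions are mutually inverse, arranging things so that the three claimed identifications ($X \cong \CA$ as algebras, $\iota^* = \om$, and $S$ = Tomita operator) hold by construction. Throughout I use that for a $C^*$-Frobenius algebra the morphisms $\ev_X = \iota^* m$ and $\coev_X = m^*\iota = \ev_X^*$ solve the conjugate equations, so by Theorem~\ref{thm:HilbfCrossing} the associated antilinear map $S\in\CS(X)$ of \eqref{eq:defSfromrealHil} is an involution, and by Lemma~\ref{lem:Sisalgebrainvol} it is moreover an antilinear algebra anti-involution with $L_a^* = L_{Sa}$.

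\emph{From a pair $(\CA,\om)$ to a normalized $C^*$-Frobenius algebra.} Given a finite-dimensional $C^*$-algebra $\CA$ with faithful state $\om$, I would set $X:=\CA$ with the GNS inner product $\ip{a}{b}:=\om(a^*b)$, which is positive definite precisely because $\om$ is faithful, so $X\in\Hilbf$. Take $m(a\ot b):=ab$ and $\iota(\lambda):=\lambda 1_\CA$. Associativity and unitality are immediate, and normalization follows from $\iota^*(x)=\ip{1_\CA}{x}=\om(x)$, giving $\iota^*\iota=\om(1_\CA)=1$. The only substantial point is the $C^*$-Frobenius property. Here I would first observe that left multiplication is a $*$-representation, $L_a^*=L_{a^*}$, since $\ip{ax}{y}=\om(x^*a^*y)=\ip{x}{a^*y}$. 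Testing the relation $(m\ot 1)(1\ot m^*)=m^*m$, i.e.\ $(L_a\ot 1)m^*(b)=m^*(ab)$, against $\ip{c\ot d}{\,\cdot\,}$ collapses both sides to $\om(d^*c^*ab)$ using $L_a^*c=a^*c$. The companion relation $(1\ot m)(m^*\ot 1)=m^*m$ then comes for free by taking Hilbert-space adjoints, because $[(m\ot 1)(1\ot m^*)]^*=(1\ot m)(m^*\ot 1)$ while $(m^*m)^*=m^*m$.

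\emph{From a normalized $C^*$-Frobenius algebra to a pair $(\CA,\om)$.} Conversely, given $(X,m,\iota)$, I endow the unital associative algebra $X$ with the involution $a^*:=Sa$; by the facts recalled above this makes $(X,m,\iota,S)$ a unital $*$-algebra. To upgrade it to a $C^*$-algebra I would use the left-regular representation $L:X\to\CB(X)$: Lemma~\ref{lem:Sisalgebrainvol} gives $L_a^*=L_{Sa}=L_{a^*}$, so $L$ is a $*$-homomorphism, and it is faithful since $L_a(1_X)=a$. Hence $\CA:=X\cong L(X)$ is a $*$-subalgebra of $\CB(X)$ that is closed under adjoints and, being finite-dimensional, norm closed, therefore a $C^*$-algebra. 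Setting $\om:=\iota^*$, the computation $\om(a^*a)=\iota^* m(Sa\ot a)=\ev_X(Sa\ot a)=\ip{S^2a}{a}=\ip{a}{a}\ge 0$ (vanishing only for $a=0$) together with $\om(1_X)=1$ shows $\om$ is a faithful state.

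\emph{Identifications, mutual inverseness, and the main difficulty.} The same computation yields $\om(a^*b)=\ip{a}{b}$, so the GNS space of $(\CA,\om)$ is $X$ itself with cyclic vector $1_X$, and its Tomita operator $a\mapsto a^*$ equals $S$; together with $\om=\iota^*$ and $X=\CA$ this establishes all three identifications. Composing the constructions returns the original data: from $(\CA,\om)$ one recovers $\ip{a}{b}=\om(a^*b)$, the same multiplication and unit; from $(X,m,\iota)$ the rebuilt inner product $\om(a^*b)$ coincides with the original one, and $m,\iota$ are untouched. I expect the main obstacle to be precisely the step showing that the $*$-algebra produced in the second direction is a genuine $C^*$-algebra rather than a bare involutive algebra — this is exactly where one must produce a faithful $*$-representation and use Lemma~\ref{lem:Sisalgebrainvol} to match the algebraic involution $S$ with the Hilbert-space adjoint. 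A secondary subtlety is bookkeeping the two Frobenius relations; the observation that they are mutual adjoints is what keeps the $C^*$-Frobenius verification short and avoids any appeal to the modular automorphism group (the non-triviality of $\Delta$ enters only through whether $\om$ is tracial, not into the bijection itself).
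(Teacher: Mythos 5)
Your proof is correct, and its direction from $(X,m,\iota)$ to $(\CA,\om)$ is the same as the paper's: define the involution $a^\dagger:=Sa$, invoke Lemma~\ref{lem:Sisalgebrainvol} to get $(L_a)^*=L_{Sa}$, realize the $C^*$-norm through the faithful left-regular representation on the finite-dimensional space $X$, and set $\om:=\iota^*$. Where you genuinely diverge is in the converse direction, at the one substantial step: verifying the $C^*$-Frobenius property for the GNS data. The paper chooses an orthonormal basis $\{v_n\}_n$ of the GNS space, uses that the GNS vector is cyclic and separating to pick $e_n$ with $\pi(e_n)\Om=v_n$, forms $f:=\sum_n e_n\ot e_n^\dagger$, and first proves the structural formula $m^*(x)=(x\ot1)f$, from which the Frobenius identity follows. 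You instead pair $(m\ot 1)(1\ot m^*)=m^*m$ directly against matrix elements $\ip{c\ot d}{\,\cdot\,}$, collapsing both sides to $\om(d^*c^*ab)$ using only $(L_a)^*=L_{a^*}$, and then obtain the companion identity $(1\ot m)(m^*\ot 1)=m^*m$ for free by taking Hilbert-space adjoints --- a symmetry the paper leaves implicit. Your computation is shorter and more elementary (no auxiliary element $f$, no cyclic-and-separating argument); the paper's detour is not wasted, however, because the formula $m^*(x)=(x\ot1)f$ is reused immediately after the proposition, in \eqref{eq:z} and the subsequent corollary, to compute $mm^*$, decide specialness and standardness, and write $T=m^*m$ in concrete matrix form. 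One sentence worth adding to your mutual-inverseness paragraph: starting from $(\CA,\om)$, the involution determined by $\ev_X=\iota^*m$ via \eqref{eq:defSfromrealHil} is the original star, since $\langle Sv,w\rangle=\om(vw)=\langle v^*,w\rangle$ for all $w$ and the form is non-degenerate by faithfulness; together with uniqueness of the $C^*$-norm on a finite-dimensional $*$-algebra, this is what guarantees that the round trip returns $(\CA,\om)$ on the nose.
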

\begin{proof}
	A $C^*$-Frobenius algebra $(X,m,\iota)$ in $\Hilbf$ is in particular a unital associative algebra, and $\iota^*:X\to\Cl$ is a linear functional. We will equip $X$ with the structure of a $C^*$-algebra so that $\om:=\iota^*$ will become a faithful state.
	
	We define $x^\dagger:=S(x)$, $x\in X$, where $S$ is the antilinear involution defined by \eqref{eq:defSfromrealHil} with respect to $\ev_X=\iota^*m$. As the left multipliers satisfy $(L_x)^*=L_{x^\dagger}$ by Lemma~\ref{lem:Sisalgebrainvol}, this turns $X$ into a unital ${}^\dagger$-algebra. The norm $\|x\|:=\|L_x\|_{\CB(X)}$ is a $C^\dagger$-norm on it, and we denote the resulting $C^\dagger$-algebra $\CA$. Now $\om:=\iota^*$ satisfies $\om(x^\dagger y)=\iota^*(m(S(x)\ot y))=\langle x,y\rangle$ and is hence a faithful state on $\CA$.
	
	Conversely, let $\CA$ be a finite-dimensional $C^\dagger$-algebra with faithful state $\om$. In view of the faithfulness and finite dimension, the GNS construction $(\pi,\Hil,\Om)$ turns $\CA$ into a Hilbert space $X=\Hil$, with scalar product $\langle x,y\rangle:=\om(x^\dagger y)$. Clearly $X$ inherits an associative multiplication $m:X\ot X \to X$ with unit $\iota:\Cl\to X$, $\la\mapsto\la1_\CA$ (or just $1_\CA = 1$) from $\CA$. We have $\iota^*=\om$, and hence the normalization of $\om$ implies $\iota^*\iota=\id_{\bC}$.
	
	It remains to establish the $C^*$-Frobenius property for $(X,m,\iota)$. To that end, let $\{v_n\}_n$ denote an orthonormal basis of the GNS space $\Hil$. Since the GNS vector $\Om$ is cyclic and separating for $\pi$, there exist unique elements $e_n\in\CA$ such that $\pi(e_n)\Om=v_n$, and we consider
	\begin{align}
		f:=\sum_n e_n\ot e_n^\dagger \in \CA\ot\CA.
	\end{align}
	In the following, we will identify $\CA$ and $\CA\ot\CA$ with their (faithful) GNS representations w.r.t.\ $\om$ and $\om\ot\om$, respectively, and write $S$ for the Tomita operator of $(\CA,\om)$.
	
	We claim $m^*(x)=(x\ot 1)f$, $x\in\CA$, where $f$ satisfies $f(\Om\ot\Om)=\xi_S$ in the notation of \eqref{def:xiS}. For any $a,b,x\in\CA$, we have
	\begin{align*}
		\langle a\Om\ot b\Om,\,(x\ot1)f(\Om\ot\Om)\rangle
		&=
		\sum_n \langle x^\dagger a\Om,v_n\rangle\langle b\Om,Sv_n\rangle
		\\
		&=
		\langle x^\dagger a\Om,S^*b\Om\rangle
		\\
		&=
		\langle b\Om,a^\dagger x\Om\rangle
		\\
		&=\langle m(a\ot b)\Om,x\Om\rangle
		=
		\langle a\Om\ot b\Om,m^*(x)(\Om\ot\Om)\rangle.
	\end{align*}
	As $\Om\ot\Om$ is cyclic and separating for $\CA\ot\CA$, this proves the claimed formula $m^*(x)=(x\ot1)f$ and shows in particular that $f$ is independent of the chosen orthonormal basis.
	
	The $C^*$-Frobenius property now follows by a routine calculation: For any $a,b\in\CA$, we have
	\begin{align*}
		(m\ot\id)(\id\ot m^*)(a\ot b)
		&=
		(m\ot\id)(a \otimes (b\ot 1) f)
		\\
		&=
		\sum_n abe_n\ot e_n^\dagger
		=
		(ab\ot 1)f
		=
		(m^*m)(a\ot b).
	\end{align*}
	As the two constructions are clearly inverses of each other, the proof is finished.
\end{proof}

With notation as in this proof, we set $z:=\sum_n e_ne_n^\dagger\in\CA$, and observe
\begin{align}\label{eq:z}
	(mm^*)(x)
	&=
	m((x\ot1)f)
	=
	\sum_n xe_ne_n^\dagger
	=
	x\cdot z,\qquad x\in\CA.
\end{align}
Thus the $C^*$-Frobenius algebras described in Proposition \ref{prop:QsysCstar} are special (i.e. Q-systems) if and only if $z$ is a multiple of the identity. To describe this more explicitly, and to arrive at concrete representing matrices, we now consider a general finite-dimensional $C^*$-algebra $\CA$ with a general faithful state $\om=\Tr(\rho\,\cdot\,)$, namely
\begin{align}\label{eq:concrete-A-omega}
	\CA=\bigoplus_\alpha X_\alpha,\qquad \rho=\bigoplus_\alpha\rho_\alpha,
\end{align}
where $X_\alpha=M_{n_\alpha}(\bC)$, the sums run for $\alpha=1, \ldots N$, and the $\rho_\alpha\in X_\alpha$ are positive and invertible, with $\sum_\alpha\Tr\rho_\alpha=1$. We shall also use the flip operator in $X_{\alpha}\ot X_{\alpha}$, namely $F_{\alpha}:=\sum_{i,j=1}^{n_\alpha}E^{(\alpha)}_{ij}\ot E^{(\alpha)}_{ji}$, where $E_{ij}^{(\alpha)}$ are the matrix units for $X_\alpha$.

\begin{corollary}
	Consider the normalized $C^*$-Frobenius algebra $(X,m,\iota)$ corresponding to the data \eqref{eq:concrete-A-omega} via Proposition \ref{prop:QsysCstar}.
	\begin{enumerate}
		\item  $(X,m,\iota)$ is special (a Q-system) if and only if there exists $d>0$ such that $\Tr(\rho_\alpha^{-1})=d$ for all $\alpha$.
		\item $(X,m,\iota)$ is standard if and only if $\rho_\alpha = n_\alpha/{\sum_{\alpha=1}^N n_\alpha^2}\, 1_{X_\alpha}$.
		\item The associated crossing symmetric operator $T := m^*m : X \otimes X \to X \otimes X$ acts on pure tensors by $x_\alpha \otimes y_\beta \mapsto \delta_{\alpha,\beta}\, (x_\alpha y_\beta \otimes \rho_\beta^{-1}) F_\alpha$ for every $x_\alpha\in X_\alpha$, $y_\beta\in X_\beta$.
	\end{enumerate}
\end{corollary}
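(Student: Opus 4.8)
The plan is to reduce all three statements to explicit computations in the GNS picture of $(\CA,\om)$ furnished by Proposition \ref{prop:QsysCstar}, using the standard modular data of a finite-dimensional $C^*$-algebra. For $\CA=\bigoplus_\alpha X_\alpha$ and $\om=\Tr(\rho\,\cdot\,)$ the Tomita operator defined by $\ev_X$ is $S(x)=x^*$, with the familiar modular data $\Delta(x)=\rho x\rho^{-1}$ and $J(x)=\rho^{1/2}x^*\rho^{-1/2}$. I would diagonalise each $\rho_\alpha$ and work with matrix units $E_{ij}^{(\alpha)}$ adapted to an eigenbasis, so that $\langle E_{ij}^{(\alpha)}, E_{kl}^{(\beta)}\rangle=\delta_{\alpha\beta}\delta_{ik}\delta_{jl}\,p_j^{(\alpha)}$, where $p_j^{(\alpha)}$ are the eigenvalues of $\rho_\alpha$. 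Hence $e_{ij}^{(\alpha)}:=(p_j^{(\alpha)})^{-1/2}E_{ij}^{(\alpha)}$ is an orthonormal basis of $X=\CA$, and the basis-independent element $f=\sum_n e_n\ot e_n^\dagger$ from the proof of Proposition \ref{prop:QsysCstar} becomes $f=\sum_{\alpha,i,j}(p_j^{(\alpha)})^{-1}E_{ij}^{(\alpha)}\ot E_{ji}^{(\alpha)}$. Part a) is then immediate: by \eqref{eq:z} one has $(mm^*)(x)=xz$ with $z=\sum_n e_n e_n^\dagger=\bigoplus_\alpha \Tr(\rho_\alpha^{-1})\,1_{X_\alpha}$ (the products collapse because $E_{ij}^{(\alpha)}E_{ji}^{(\alpha)}=E_{ii}^{(\alpha)}$), so specialness $mm^*=d\,1_X$ holds if and only if $z=d\,1_X$, i.e.\ $\Tr(\rho_\alpha^{-1})=d$ for every $\alpha$.

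For part b) I would combine the Q-system identity $\Tr(\rho_\alpha^{-1})=d$ from a) with the Cauchy--Schwarz inequality $\Tr(\rho_\alpha)\Tr(\rho_\alpha^{-1})\geq n_\alpha^2$, whose equality case is exactly $\rho_\alpha\propto 1_{X_\alpha}$. Summing over $\alpha$ and using the normalisation $\sum_\alpha\Tr(\rho_\alpha)=1$ yields $1\geq d(X)/d$, recovering the known bound $d\geq d(X)=\sum_\beta n_\beta^2$ via $d(X)=\dim X$ from Remark \ref{rmk:dimension}. Standardness $d=d(X)$ therefore forces equality in every Cauchy--Schwarz estimate, hence $\rho_\alpha=c_\alpha 1_{X_\alpha}$; solving $\Tr(\rho_\alpha^{-1})=n_\alpha c_\alpha^{-1}=d(X)$ gives $c_\alpha=n_\alpha/\sum_\beta n_\beta^2$, and the converse is the direct check that this $\rho$ is normalised and special with $d=d(X)$.

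For part c) I would compute $T(x_\alpha\ot y_\beta)=m^*m(x_\alpha\ot y_\beta)=m^*(x_\alpha y_\beta)$ using $m^*(w)=(w\ot 1)f$ from the proof of Proposition \ref{prop:QsysCstar}. Since $x_\alpha y_\beta=\delta_{\alpha\beta}x_\alpha y_\alpha\in X_\alpha$ annihilates $E_{ij}^{(\gamma)}$ for $\gamma\neq\alpha$, only the block $\gamma=\alpha$ survives and $T(x_\alpha\ot y_\beta)=\delta_{\alpha\beta}\sum_{i,j}(p_j^{(\alpha)})^{-1}(x_\alpha y_\alpha)E_{ij}^{(\alpha)}\ot E_{ji}^{(\alpha)}$. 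Recognising $\rho_\alpha^{-1}E_{ji}^{(\alpha)}=(p_j^{(\alpha)})^{-1}E_{ji}^{(\alpha)}$ and reading the sum as the product in the algebra $X_\alpha\ot X_\alpha$ of $x_\alpha y_\beta\ot\rho_\beta^{-1}$ with the flip $F_\alpha=\sum_{i,j}E_{ij}^{(\alpha)}\ot E_{ji}^{(\alpha)}$, this is exactly $\delta_{\alpha\beta}(x_\alpha y_\beta\ot\rho_\beta^{-1})F_\alpha$; basis-independence of both sides removes the diagonalisation assumption.

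The main obstacle I anticipate is conceptual rather than computational and lies in part b): one must correctly interpret the categorical notion of a \emph{standard} Q-system ($d=d(X)$) and link it to the concrete spectral data of $\rho$. The Cauchy--Schwarz computation recovers the abstract inequality $d\geq d(X)$ and identifies its equality case, but care is needed to justify $d(X)=\sum_\beta n_\beta^2$ and to track the normalisation constraint when solving for the $c_\alpha$. A secondary point requiring attention is the bookkeeping in c): the expression $(x_\alpha y_\beta\ot\rho_\beta^{-1})F_\alpha$ must be read as a product in the matrix algebra $X_\alpha\ot X_\alpha$, and the identity verified in a $\rho$-diagonalising basis before invoking basis-independence.
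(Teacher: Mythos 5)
Your proposal is correct and follows essentially the same route as the paper: both work in the GNS picture of Proposition \ref{prop:QsysCstar}, diagonalise each $\rho_\alpha$, take the orthonormal basis $(\la_j^{(\alpha)})^{-1/2}E_{ij}^{(\alpha)}$, and compute $f=\bigoplus_\alpha(1_{X_\alpha}\ot\rho_\alpha^{-1})F_\alpha$ and $z=\bigoplus_\alpha\Tr(\rho_\alpha^{-1})\,1_{X_\alpha}$, from which a) and c) are read off. The only difference is that you spell out the Cauchy--Schwarz equality argument underlying b), a detail the paper compresses into ``comparing $z$ with $d\cdot 1_\CA$ and $d(\CA)=\sum_\alpha n_\alpha^2$''.
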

\begin{proof}
	
	The GNS representation of $(\CA,\om)$ has the form
	\begin{align*}
		\Hil_\om=\bigoplus_\alpha\Cl^{n_\alpha}\ot\Cl^{n_\alpha},\quad
		\Om=\bigoplus_\alpha\sum_{k=1}^{n_\alpha}\sqrt{\la^{(\alpha)}_k}v_k^{(\alpha)}\ot v_k^{(\alpha)},\quad
		\pi(\bigoplus_\alpha x_\alpha) = \bigoplus_\alpha\left(x_\alpha\ot1_{X_\alpha}\right),
	\end{align*}
	where $\{v_k^{(\alpha)}\}_k$ is an orthonormal basis of $\Cl^{n_\alpha}$ in which $\rho_\alpha$ is diagonal, with eigenvalues $\la^{(\alpha)}_k$.
	
	In the notation from the proof of Proposition \ref{prop:QsysCstar}, we therefore consider the algebra elements $e_{\alpha,i,j}=(\la^{(\alpha)}_j)^{-1/2}E^{(\alpha)}_{ij}$, and have
	\begin{align}
		f&=\bigoplus_\alpha \sum_{i,j=1}^{n_\alpha}\frac{1}{\la_j^{(\alpha)}} E^{(\alpha)}_{ij}\ot E^{(\alpha)}_{ji}
		=
		\bigoplus_\alpha (1_{X_\alpha}\ot\rho_\alpha^{-1})F_{\alpha},
		\\
		z&=\bigoplus_\alpha \sum_{i,j=1}^{n_\alpha}\frac{1}{\la_j^{(\alpha)}} E_{ij} E_{ji}
		=
		\bigoplus_\alpha \Tr(\rho_\alpha^{-1})1_{X_\alpha}.
	\end{align}
	Comparing $z$ with $d\cdot 1_\CA$ and $d(\CA)=\sum_\alpha n_\alpha^2$ now yields a) and b), and c) follows from the formula for $f$.
\end{proof}

We thus see that Q-systems $(X,m,\iota)$ in $\Hilbf$ generate explicit solutions to the Yang--Baxter equation that are crossing symmetric w.r.t.\ the Tomita operator given by \eqref{eq:defSfromrealHil}, which here is encoded in the density matrix $\rho$. We leave it to a future work to find further connections between Q-systems and crossing symmetric solutions to the Yang--Baxter equation, possibly in the setting of subfactors.

\subsubsection*{Acknowledgements}
L.G. thanks Sebastiano Carpi for insightful feedback on concrete $C^*$-Frobenius algebras, and G.L. thanks Dietmar Bisch for discussions on subfactor theoretical Fourier transforms.


\begin{thebibliography}{99}
	{}
	\bibitem[Abr99]{Abrams:1999}
	L. Abrams. “{Modules, comodules, and cotensor products over
		{F}robenius algebras}”. In: \emph{J. Algebra} 219.1 (1999),
	pp. 201–213. \textsc{doi}: \href
	{https://doi.org/10.1006/jabr.1999.7901} {\nolinkurl
		{10.1006/jabr.1999.7901}}.
	{}
	\bibitem[AD19]{AranoDeCommer:2019}
	Y. Arano and K. De Commer. “{Torsion-freeness for fusion rings and
		tensor {${\rm C}^*$}-categories}”. In: \emph{J. Noncommut. Geom.}
	13.1 (2019), pp. 35–58. \textsc{doi}: \href
	{https://doi.org/10.4171/JNCG/322} {\nolinkurl
		{10.4171/JNCG/322}}.
	{}
	\bibitem[AH67]{ArakiHaag:1967}
	H. Araki and R. Haag. “{Collision Cross Sections in Terms of Local
		Observables}”. In: \emph{Comm. Math. Phys.} 4 (1967), pp. 77–91.
	{}
	\bibitem[Ara74]{Araki:1974}
	H. Araki. “{Some properties of modular conjugation operator of von
		Neumann algebras and a non-commutative Radon-Nikodym theorem
		with a chain rule}”. In: \emph{Paciﬁc Journal of Mathematics} 50.2
	(1974), pp. 309–354.
	{}
	\bibitem[AW68]{ArakiWoods:1968}
	H. Araki and E. Woods. “{A classiﬁcation of factors}”. In:
	\emph{Publ. Res. Inst. Math. Sci.} 4.1 (1968), pp. 51–130.
	{}
	\bibitem[AZ05]{ArakiZsido:2005}H. Araki and L. Zsido. “{Extension of the structure theorem of
		{B}orchers and its application to half-sided modular inclusions}”. In:
	\emph{Rev. Math. Phys.} 17 (2005), pp. 491–543.
	{}
	\bibitem[BDG21]{BischoffDelVecchioGiorgetti:2021}
	M. Bischoff, S. Del Vecchio, and L. Giorgetti. “{Compact hypergroups
		from discrete subfactors}”. In: \emph{J. Funct. Anal.} 281.1 (2021),
	Paper No. 109004, 78. \textsc{doi}: \href
	{https://doi.org/10.1016/j.jfa.2021.109004} {\nolinkurl
		{10.1016/j.jfa.2021.109004}}.
	{}
	\bibitem[BDG22]{BischoffDelVecchioGiorgetti:2022}
	M. Bischoff, S. Del Vecchio, and L. Giorgetti. “{Galois
		correspondence and {F}ourier analysis on local discrete subfactors}”. In:
	\emph{Ann. Henri Poincaré} 23.8 (2022), pp. 2979–3020. \textsc{doi}:
	\href {https://doi.org/10.1007/s00023-022-01154-4} {\nolinkurl
		{10.1007/s00023-022-01154-4}}.
	{}
	\bibitem[BEG65]{BrosEpsteinGlaser:1965}
	J. Bros, H. Epstein, and V. Glaser. “{A Proof of the Crossing
		Property for Two-Particle-Amplitudes in General Quantum Field
		Theory }”. In: \emph{Comm. Math. Phys.} 1 (1965), pp. 240–264.
	{}
	\bibitem[BFS97]{BeidarFongStolin:1997}
	K. I. Beidar, Y. Fong, and A. Stolin. “{On {F}robenius algebras and
		the quantum {Y}ang-{B}axter equation}”. In: \emph{Trans. Amer.
		Math. Soc.} 349.9 (1997), pp. 3823–3836. \textsc{doi}: \href
	{https://doi.org/10.1090/S0002-9947-97-01808-4} {\nolinkurl
		{10.1090/S0002-9947-97-01808-4}}.
	{}
	\bibitem[BGL02]{BrunettiGuidoLongo:2002}
	R. Brunetti, D. Guido, and R. Longo. “{Modular localization and
		Wigner particles}”. In: \emph{Rev. Math. Phys.} 14 (2002),
	pp. 759–786. \textsc{doi}: \href
	{https://doi.org/10.1142/S0129055X02001387} {\nolinkurl
		{10.1142/S0129055X02001387}}.
	{}
	\bibitem[Bis+15]{BischoffKawahigashiLongoRehren:2015}
	M. Bischoff, Y. Kawahigashi, R. Longo, and K.-H. Rehren.
	\emph{{Tensor Categories and Endomorphisms of von Neumann
			Algebras (with Applications to Quantum Field Theory)}}. Springer
	Briefs in Mathematical Physics 3. Springer, 2015.
	{}
	\bibitem[Bis97]{Bisch:1997}
	D. Bisch. “{Bimodules, higher relative commutants and the fusion
		algebra associated to a subfactor}”. In: \emph{{Operator algebras and
			their applications ({W}aterloo, {ON}, 1994/1995)}}. Vol. 13. Fields
	Inst. Commun. Amer. Math. Soc., Providence, RI, 1997, pp. 13–63.{}
	\bibitem[BJ00]{BischJones:2000}
	D. Bisch and V. Jones. “{Singly generated planar algebras of small
		dimension}”. In: \emph{Duke Math. J.} 101.1 (2000), pp. 41–75.
	\textsc{doi}: \href {https://doi.org/10.1215/S0012-7094-00-10112-3}
	{\nolinkurl {10.1215/S0012-7094-00-10112-3}}.
	{}
	\bibitem[Bor92]{Borchers:1992}
	H.-J. Borchers. “{The CPT theorem in two-dimensional theories of
		local observables}”. In: \emph{Comm. Math. Phys.} 143 (1992),
	pp. 315–332. \textsc{doi}: \href {https://doi.org/10.1007/BF02099011}
	{\nolinkurl {10.1007/BF02099011}}.
	{}
	\bibitem[BR87]{BratteliRobinson:1987}
	O. Bratteli and D. W. Robinson. \emph{{Operator Algebras and
			Quantum Statistical Mechanics I}}. Springer, 1987.
	{}
	\bibitem[BS94]{BozejkoSpeicher:1994}
	M. Boejko and R. Speicher. “{Completely Positive Maps on Coxeter
		Groups, Deformed Commutation Relations, and Operator Spaces}”. In:
	\emph{Math. Ann.} 300 (1994), pp. 97–120. \textsc{doi}: \href
	{https://doi.org/10.1007/BF01450478} {\nolinkurl
		{10.1007/BF01450478}}.
	{}
	\bibitem[BT15]{BischoffTanimoto:2013}
	M. Bischoff and Y. Tanimoto. “{Integrable QFT and Longo-Witten
		endomorphisms}”. In: \emph{Ann. Henri Poincaré} 16.2 (2015),
	pp. 569–608. \textsc{doi}: \href
	{https://doi.org/10.1007/s00023-014-0337-1} {\nolinkurl
		{10.1007/s00023-014-0337-1}}.
	{}
	\bibitem[BW76]{BisognanoWichmann:1976}
	J. J. Bisognano and E. H. Wichmann. “{On the Duality Condition for
		Quantum Fields}”. In: \emph{J. Math. Phys.} 17 (1976), pp. 303–321.
	{}
	\bibitem[Car+23]{CarpiGaudioGiorgettiHillier:2023}
	S. Carpi, T. Gaudio, L. Giorgetti, and R. Hillier. “{Haploid algebras
		in {$C^*$}-tensor categories and the {S}chellekens list}”. In:
	\emph{Comm. Math. Phys.} 402.1 (2023), pp. 169–212. \textsc{doi}:
	\href {https://doi.org/10.1007/s00220-023-04722-9} {\nolinkurl
		{10.1007/s00220-023-04722-9}}.
	{}
	\bibitem[Che+22]{ChenHernandezPalomaresJonesPenneys2022}
	Q. Chen, R. Hernández Palomares, C. Jones, and D. Penneys.
	“{Q-system completion for {$\rm C^*$} 2-categories}”. In: \emph{J.
		Funct. Anal.} 283.3 (2022), Paper No. 109524, 59. \textsc{doi}: \href
	{https://doi.org/10.1016/j.jfa.2022.109524} {\nolinkurl
		{10.1016/j.jfa.2022.109524}}.{}
	\bibitem[CIM00]{CaenepeelBogdanMilitaru:2000}
	S. Caenepeel, B. Ion, and G. Militaru. “{The structure of
		{F}robenius algebras and separable algebras}”. In: \emph{$K$-Theory}
	19.4 (2000), pp. 365–402. \textsc{doi}: \href
	{https://doi.org/10.1023/A:1007849203555} {\nolinkurl
		{10.1023/A:1007849203555}}.
	{}
	\bibitem[CL23]{CorreaDaSilvaLechner:2023}
	R. Correa da Silva and G. Lechner. “{Modular Structure and
		Inclusions of Twisted Araki-Woods Algebras}”. In: \emph{Commun.
		Math. Phys.} (2023). \textsc{doi}: \href
	{https://doi.org/10.1007/s00220-023-04773-y} {\nolinkurl
		{10.1007/s00220-023-04773-y}}.
	{}
	\bibitem[DR89]{DoplicherRoberts:1989-1}
	S. Doplicher and J. E. Roberts. “{A new duality theory for compact
		groups}”. In: \emph{Invent. Math.} 98.1 (1989), pp. 157–218.
	\textsc{doi}: \href {https://doi.org/10.1007/BF01388849} {\nolinkurl
		{10.1007/BF01388849}}.
	{}
	\bibitem[Dri83]{Drinfeld:1983}
	V. G. Drinfeld. “{Hamiltonian structures on {L}ie groups, {L}ie
		bialgebras and the geometric meaning of classical {Y}ang-{B}axter
		equations}”. In: \emph{Dokl. Akad. Nauk SSSR} 268.2 (1983),
	pp. 285–287.
	{}
	\bibitem[Eti+15]{EtingofGelakiNikshychOstrik:2015}
	P. Etingof, S. Gelaki, D. Nikshych, and V. Ostrik. \emph{{Tensor
			categories}}. Vol. 205. Mathematical Surveys and Monographs.
	American Mathematical Society, Providence, RI, 2015, pp. xvi+343.
	\textsc{doi}: \href {https://doi.org/10.1090/surv/205} {\nolinkurl
		{10.1090/surv/205}}.
	{}
	\bibitem[GHJ89]{GoodmandelaHarpeJones:1989}
	F. M. Goodman, P. de la Harpe, and V. F. R. Jones. \emph{{Coxeter
			Graphs and Towers of Algebras}}. Springer, 1989.
	{}
	\bibitem[Gio22]{Giorgetti:2022}
	L. Giorgetti. “{A planar algebraic description of conditional
		expectations}”. In: \emph{Internat. J. Math.} 33.5 (2022), Paper No.
	2250037, 23. \textsc{doi}: \href
	{https://doi.org/10.1142/S0129167X22500379} {\nolinkurl
		{10.1142/S0129167X22500379}}.
	{}
	\bibitem[GW09]{GoodmanWallach:2009}
	R. Goodman and N. R. Wallach. \emph{{Symmetry, Representations,
			and Invariants}}. Springer, 2009.{}
	\bibitem[GYZ24]{GiorgettiYuanZhao:2024}
	L. Giorgetti, W. Yuan, and X. Zhao. “{Separable algebras in
		multitensor {$\rm C^*$}-categories are unitarizable}”. In:
	\emph{AIMS Math.} 9.5 (2024), pp. 11320–11334. \textsc{doi}: \href
	{https://doi.org/10.3934/math.2024555} {\nolinkurl
		{10.3934/math.2024555}}.
	{}
	\bibitem[Hia88]{Hiai:1988}
	F. Hiai. “{Minimizing indices of conditional expectations onto a
		subfactor}”. In: \emph{Publ. Res. Inst. Math. Sci.} 24.4 (1988),
	pp. 673–678. \textsc{doi}: \href
	{https://doi.org/10.2977/prims/1195174872} {\nolinkurl
		{10.2977/prims/1195174872}}.
	{}
	\bibitem[HL18]{HollandsLechner:2018}
	S. Hollands and G. Lechner. “{$SO(d,1)$-invariant Yang-Baxter
		operators and the dS/CFT-correspondence}”. In: \emph{Commun.
		Math. Phys.} 357.1 (2018), pp. 159–202. \textsc{doi}: \href
	{https://doi.org/10.1007/s00220-017-2942-6} {\nolinkurl
		{10.1007/s00220-017-2942-6}}.
	{}
	\bibitem[HV19]{HeunenVicary:2019}
	C. Heunen and J. Vicary. \emph{{Categories for quantum theory}}.
	Vol. 28. Oxford Graduate Texts in Mathematics. An introduction.
	Oxford University Press, Oxford, 2019, pp. xii+324. \textsc{doi}: \href
	{https://doi.org/10.1093/oso/9780198739623.001.0001} {\nolinkurl
		{10.1093/oso/9780198739623.001.0001}}.
	{}
	\bibitem[Jaf+20]{JaffeJiangLiuRenWu:2020}
	A. Jaffe, C. Jiang, Z. Liu, Y. Ren, and J. Wu. “{Quantum {F}ourier
		analysis}”. In: \emph{Proc. Natl. Acad. Sci. USA} 117.20 (2020),
	pp. 10715–10720. \textsc{doi}: \href
	{https://doi.org/10.1073/pnas.2002813117} {\nolinkurl
		{10.1073/pnas.2002813117}}.
	{}
	\bibitem[JLW16]{JiangLiuWu:2016}
	C. Jiang, Z. Liu, and J. Wu. “{Noncommutative uncertainty
		principles}”. In: \emph{J. Funct. Anal.} 270.1 (2016), pp. 264–311.
	\textsc{doi}: \href {https://doi.org/10.1016/j.jfa.2015.08.007}
	{\nolinkurl {10.1016/j.jfa.2015.08.007}}.
	{}
	\bibitem[Jon83]{Jones:1983_2}
	V. F. R. Jones. “{Index for subfactors}”. In: \emph{Invent. Math.}
	72.1 (1983), pp. 1–25. \textsc{doi}: \href
	{https://doi.org/10.1007/BF01389127} {\nolinkurl
		{10.1007/BF01389127}}.
	{}\bibitem[Jon85]{Jones:1985}
	V. F. R. Jones. “{A Polynomial Invariant for Knots via von Neumann
		Algebras}”. In: \emph{Bull. Am. Math. Soc.} 12.1 (1985), pp. 103–111.
	{}
	\bibitem[JSW95]{JorgensenSchmittWerner:1995}
	P. Jørgensen, L. Schmitt, and R. Werner. “{Positive representations
		of general commutation relations allowing Wick ordering}”. In:
	\emph{J. Funct. Anal.} 134.1 (1995), pp. 33–99.
	{}
	\bibitem[Kos86]{Kosaki:1986}
	H. Kosaki. “{Extension of Jones’ theory on index to arbitrary
		factors}”. In: \emph{J. Funct. Anal.} 66.1 (1986), pp. 123–140.
	\textsc{doi}: \href {https://doi.org/10.1016/0022-1236(86)90085-6}
	{\nolinkurl {10.1016/0022-1236(86)90085-6}}.
	{}
	\bibitem[KSW23]{KumarSkalskiWasilewski:2023}
	M. Kumar, A. Skalski, and M. Wasilewski. “{Full Solution of the
		Factoriality Question for q-Araki-Woods von Neumann Algebras Via
		Conjugate Variables}”. In: \emph{Communications in Mathematical
		Physics} (2023), pp. 1–11.
	{}
	\bibitem[KW24]{RahulKumarWirth:2024}
	R. Kumar and M. Wirth. “{Operator-valued twisted Araki-Woods
		algebras}”. In: \emph{arXiv preprint arXiv:2406.06179} 2406.06179
	(2024).
	{}
	\bibitem[Lec03]{Lechner:2003}
	G. Lechner. “{Polarization-free quantum ﬁelds and interaction}”. In:
	\emph{Lett. Math. Phys.} 64 (2003), pp. 137–154. \textsc{doi}: \href
	{https://doi.org/10.1023/A:1025772304804} {\nolinkurl
		{10.1023/A:1025772304804}}.
	{}
	\bibitem[LL15]{LechnerLongo:2014}
	G. Lechner and R. Longo. “{Localization in Nets of Standard
		Spaces}”. In: \emph{Comm. Math. Phys.} 336.1 (2015), pp. 27–61.
	\textsc{doi}: \href {https://doi.org/10.1007/s00220-014-2199-2}
	{\nolinkurl {10.1007/s00220-014-2199-2}}.
	{}
	\bibitem[Lon08]{Longo:2008}
	R. Longo. “{Lectures on Conformal Nets - Part 1}”. In: \emph{{Von
			Neumann algebras in Sibiu}}. Theta, 2008, pp. 33–91.
	{}
	\bibitem[Lon89]{Longo:1989}
	R. Longo. “{Index of subfactors and statistics of quantum ﬁelds I}”.
	In: \emph{Comm. Math. Phys.} 126 (1989), pp. 217–247.
	{}
	\bibitem[Lon90]{Longo:1990}R. Longo. “{Index of subfactors and statistics of quantum ﬁelds. II.
		Correspondences, braid group statistics and Jones polynomial}”. In:
	\emph{Comm. Math. Phys.} 130 (1990), pp. 285–309. \textsc{doi}:
	\href {https://doi.org/10.1007/BF02473354} {\nolinkurl
		{10.1007/BF02473354}}.
	{}
	\bibitem[Lon94]{Longo:1994}
	R. Longo. “{A duality for {H}opf algebras and for subfactors. {I}}”.
	In: \emph{Comm. Math. Phys.} 159.1 (1994), pp. 133–150.
	\textsc{doi}: \href {https://doi.org/10.1007/BF02100488} {\nolinkurl
		{10.1007/BF02100488}}.
	{}
	\bibitem[LPW21]{LiuPalcouxWu:2021}
	Z. Liu, S. Palcoux, and J. Wu. “{Fusion bialgebras and Fourier
		analysis: analytic obstructions for unitary categoriﬁcation}”. In:
	\emph{Adv. Math.} 390 (2021), p. 107905. \textsc{doi}: \href
	{https://doi.org/10.1016/j.aim.2021.107905} {\nolinkurl
		{10.1016/j.aim.2021.107905}}.
	{}
	\bibitem[LR97]{LongoRoberts:1997}
	R. Longo and J. E. Roberts. “{A Theory of Dimension}”. In:
	\emph{K-Theory} 11 (1997), pp. 103–159.
	{}
	\bibitem[LS69]{LarsonSweedler:1969}
	R. G. Larson and M. E. Sweedler. “{An associative orthogonal
		bilinear form for {H}opf algebras}”. In: \emph{Amer. J. Math.} 91
	(1969), pp. 75–94. \textsc{doi}: \href
	{https://doi.org/10.2307/2373270} {\nolinkurl {10.2307/2373270}}.
	{}
	\bibitem[LST13]{LechnerSchlemmerTanimoto:2013}
	G. Lechner, J. Schlemmer, and Y. Tanimoto. “{On the equivalence of
		two deformation schemes in quantum ﬁeld theory}”. In: \emph{Lett.
		Math. Phys.} 103.4 (2013), pp. 421–437. \textsc{doi}: \href
	{https://doi.org/10.1007/s11005-012-0599-9} {\nolinkurl
		{10.1007/s11005-012-0599-9}}.
	{}
	\bibitem[LW11]{LongoWitten:2011}
	R. Longo and E. Witten. “{An Algebraic Construction of Boundary
		Quantum Field Theory }”. In: \emph{Comm. Math. Phys.} 303.1
	(2011), pp. 213–232. \textsc{doi}: \href
	{https://doi.org/10.1007/s00220-010-1133-5} {\nolinkurl
		{10.1007/s00220-010-1133-5}}.
	{}
	\bibitem[Lyu87]{Lyubashenko:1987}
	V. V. Lyubashenko. “{Vectorsymmetries}”. In: \emph{Seminar on
		Supermanifolds} 14 (1987).
	{}
	\bibitem[Mar69]{Martin:1969_2}A. Martin. \emph{{Scattering Theory: Unitarity, Analyticity and
			Crossing}}. Lecture Notes in Physics. Springer, 1969.
	{}
	\bibitem[Miz21]{Mizera:2021}
	S. Mizera. “{Bounds on Crossing Symmetry}”. In: \emph{Phys. Rev.
		D} 103 (Feb. 2021), p. 081701. \textsc{doi}: \href
	{https://doi.org/10.1103/PhysRevD.103.L081701} {\nolinkurl
		{10.1103/PhysRevD.103.L081701}}.
	{}
	\bibitem[MN23]{MorinelliNeeb:2023}
	V. Morinelli and K.-H. Neeb. “{From local nets to Euler elements}”.
	In: \emph{arXiv preprint arXiv:2312.12182} (2023).
	{}
	\bibitem[Müg03]{Mueger:2003-I}
	M. Müger. “{From subfactors to categories and topology. {I}.
		{F}robenius algebras in and {M}orita equivalence of tensor categories}”.
	In: \emph{J. Pure Appl. Algebra} 180.1-2 (2003), pp. 81–157.
	\textsc{doi}: \href {https://doi.org/10.1016/S0022-4049(02)00247-5}
	{\nolinkurl {10.1016/S0022-4049(02)00247-5}}.
	{}
	\bibitem[Nie98]{Niedermaier:1998}
	M. R. Niedermaier. “{A derivation of the cyclic form factor
		equation}”. In: \emph{Comm. Math. Phys.} 196 (1998), pp. 411–428.
	{}
	\bibitem[NT13]{NeshveyevTuset:2013}
	S. Neshveyev and L. Tuset. \emph{{Compact quantum groups and
			their representation categories}}. Vol. 20. Cours Spécialisés [Specialized
	Courses]. Société Mathématique de France, Paris, 2013, pp. vi+169.
	{}
	\bibitem[NW95]{NillWiesbrock:1995}
	F. Nill and H. Wiesbrock. “{A Comment on Jones inclusions with
		inﬁnite index}”. In: \emph{Rev. Math. Phys.} 7 (1995), pp. 599–630.
	{}
	\bibitem[NY18]{NeshveyevYamashita:2018}
	S. Neshveyev and M. Yamashita. “{Categorically {M}orita equivalent
		compact quantum groups}”. In: \emph{Doc. Math.} 23 (2018),
	pp. 2165–2216. \textsc{doi}: \href {https://doi.org/10.4171/DM/672}
	{\nolinkurl {10.4171/DM/672}}.
	{}
	\bibitem[Ocn91]{Ocneanu:1991}
	A. Ocneanu. “{Quantum symmetry, differential geometry of ﬁnite
		graphs and classiﬁcation of subfactors}”. In: \emph{University of Tokyo
		Seminary Notes} 45 (1991). (Notes recorded by Y. Kawahigashi).
	{}
	\bibitem[Par71]{Pareigis:1971}
	B. Pareigis. “{When {H}opf algebras are {F}robenius algebras}”. In:
	\emph{J. Algebra} 18 (1971), pp. 588–596. \textsc{doi}: \href{https://doi.org/10.1016/0021-8693(71)90141-4} {\nolinkurl
		{10.1016/0021-8693(71)90141-4}}.
	{}
	\bibitem[Pes23]{Peschik:2023}
	L. Peschik. “{Crossing-Symmetry and the String Fourier Transform}”.
	In: \emph{Student Research Project, FAU Erlangen-Nürnberg} (2023).
	{}
	\bibitem[PS16]{PolandSimmons-Duffin:2016}
	D. Poland and D. Simmons-Duﬃn. “{The conformal bootstrap}”. In:
	\emph{Nature Physics} 12 (2016), pp. 535–539.
	{}
	\bibitem[Sat97]{Sato:1997}
	N. Sato. “{Fourier transform for paragroups and its application to the
		depth two case}”. In: \emph{Publ. Res. Inst. Math. Sci.} 33.2 (1997),
	pp. 189–222. \textsc{doi}: \href
	{https://doi.org/10.2977/prims/1195145447} {\nolinkurl
		{10.2977/prims/1195145447}}.
	{}
	\bibitem[Sch10]{Schroer:2010}
	B. Schroer. “{A critical look at 50 years particle theory from the
		perspective of the crossing property}”. In: \emph{Found.Phys.} 40 (Oct.
	2010), pp. 1800–1857. \textsc{doi}: \href
	{https://doi.org/10.1007/s10701-010-9492-5} {\nolinkurl
		{10.1007/s10701-010-9492-5}}.
	{}
	\bibitem[Shl97]{Shlyakhtenko:1997}
	D. Shlyakhtenko. “{Free quasi-free states}”. In: \emph{Paciﬁc J.
		Math.} 177.2 (1997), pp. 329–368.
	{}
	\bibitem[Smi92]{Smirnov:1992}
	F. A. Smirnov. \emph{{Form Factors in Completely Integrable
			Models of Quantum Field Theory}}. Singapore: World Scientiﬁc, 1992.
	{}
	\bibitem[SP94]{SzymanskiPeligrad:1994}
	W. Szymaski and C. Peligrad. “{Saturated actions of
		ﬁnite-dimensional {H}opf {$*$}-algebras on {$C^*$}-algebras}”. In:
	\emph{Math. Scand.} 75.2 (1994), pp. 217–239. \textsc{doi}: \href
	{https://doi.org/10.7146/math.scand.a-12516} {\nolinkurl
		{10.7146/math.scand.a-12516}}.
	{}
	\bibitem[Tan12]{Tanimoto:2011-1}
	Y. Tanimoto. “{Construction of wedge-local nets of observables
		through Longo-Witten endomorphisms}”. In: \emph{Comm. Math.
		Phys.} 314.2 (2012), pp. 443–469. \textsc{doi}: \href
	{https://doi.org/10.1007/s00220-012-1462-7} {\nolinkurl
		{10.1007/s00220-012-1462-7}}.
	{}
	\bibitem[Vic11]{Vicary:2011}J. Vicary. “{Categorical formulation of ﬁnite-dimensional quantum
		algebras}”. In: \emph{Comm. Math. Phys.} 304.3 (2011), pp. 765–796.
	\textsc{doi}: \href {https://doi.org/10.1007/s00220-010-1138-0}
	{\nolinkurl {10.1007/s00220-010-1138-0}}.
	{}
	\bibitem[VV01]{VaesVanDaele:2001}
	S. Vaes and A. Van Daele. “{Hopf {$C^*$}-algebras}”. In:
	\emph{Proc. London Math. Soc. (3)} 82.2 (2001), pp. 337–384.
	\textsc{doi}: \href {https://doi.org/10.1112/S002461150101276X}
	{\nolinkurl {10.1112/S002461150101276X}}.
	{}
	\bibitem[Wor87]{Woronowicz:1987}
	S. L. Woronowicz. “{Compact matrix pseudogroups}”. In:
	\emph{Comm. Math. Phys.} 111.4 (1987), pp. 613–665.
	{}
	\bibitem[Yan23]{Yang:2023}
	Z. Yang. “{A Conjugate System for Twisted Araki-Woods von
		Neumann Algebras of ﬁnite dimensional spaces}”. In: \emph{arXiv
		preprint arXiv:2304.13856} (2023).
\end{thebibliography}
\end{document}